\documentclass[11pt]{article}
\usepackage{pifont}
\usepackage{amsmath, amsthm, amscd, amsfonts, graphicx,makeidx,amssymb,mathrsfs, float,capt-of}
\usepackage[all]{xy}
\usepackage{verbatim}
\usepackage[mathcal]{euscript}
\usepackage{latexsym}
\usepackage{epsfig}

\usepackage{rotating}
\usepackage{pdfsync}

\parskip 3pt

 \oddsidemargin 0.3cm \evensidemargin 0.3cm
\topmargin 0.5cm \textwidth 16cm \textheight 21cm

\newtheorem{thm}{Theorem}[section]
\newtheorem{cor}[thm]{Corollary}
\newtheorem{lem}[thm]{Lemma}
\newtheorem{prop}[thm]{Proposition}
\newtheorem{rem}[thm]{Remark}
\theoremstyle{definition}
\newtheorem{defn}[thm]{\textbf{Definition}}

\theoremstyle{definition}

\newtheorem{example}{Example}[section]
\theoremstyle{remark}

\newcommand{\ben}{\begin{equation}}
\newcommand{\een}{\end{equation}}


\addtolength{\textheight}{1in}
\addtolength{\voffset}{-0.5in}





\newcommand{\VV}{{\mathcal V}}
\newcommand{\WW}{{\mathcal W}}






\newcommand{\ga}{\gamma}

\newcommand{\ve}{\varepsilon}
\newcommand{\de}{\delta}

\newcommand{\rt}{\rightarrow}
\newcommand{\m}{\mathcal{W}}

\newcommand{\ma}{\mathcal}
\newcommand{\mb}{\mathbb}

\newdir{ >}{{}*!/-10pt/@{>}}

\newcommand{\ov}{\overline}

\newcommand{\ot}{\otimes}


\title{The Lie bialgebra structure of the vector space of cyclic words.}
\author{Ana Gonz\'alez\thanks{Centro de Matem\'atica de la Facultad de Ciencias  (Montevideo, Uruguay).{\em e-mail: }{\tt ana@cmat.edu.uy }} \\
}
\begin{document}

\date{November 20, 2011}

\maketitle

\begin{abstract}
In this paper we give a combinatory proof of the Lie bialgebra structure presented in the vector space of reduced cyclic words. This structure was introduce by M. Chas in \cite{moira}, where the proof of the existence of this Lie bialgebra structure is based on the existence of an isomorphism between the space of reduced cyclic word and the space of curves on a surface.
\end{abstract}

\tableofcontents

\section{Introduction}
\hspace{.5cm}The purpose of this work is to give a combinatory proof of the existence of a Lie bialgebra structure in the space of reduced cyclic words $\mb{V}$. In \cite{moira} M. Chas gave a proof of this result by means of an isomorphism of Lie bialgebras between $\mb{V}$ and the space of curves on a surface. In \cite{gold} Goldman proved that the space of curves on a surface admits a Lie algebra structure. Subsequently, Turaev in \cite{turaev} defined the Lie coalgebra structure on $\mb{V}$ and he proved the compatibility between the algebra and the coalgebra structures.

In this paper I will prove this theorem (namely, the Lie algebra structure of $\mb{V}$) using purely combinatorial methods.

The organization of this paper is as follows. In the second section I present the concept of Lie bialgebra and introduce some basic examples. The third section is dedicated to the construction of the space of cyclic words $\mb{V}$, and to setting up the terminology and main definitions that allow us to define the Lie algebra and coalgebra associated to this space. In the fourth section I prove that $\mb{V}$ admits a Lie algebra structure. The final two sections are dedicated to give the Lie coalgebra structure of $\mb{V}$ and to prove the compatibility between these two structures.

\section{Lie bialgebras}
\hspace{.5cm} The infinitesimal notion of a Hopf algebra is a Lie bialgebra consisting of a pair $(\ma{A},\de)$, where $\ma{A}$ is a Lie algebra and $\de:\ma{A}\longrightarrow \ma{A}\otimes\ma{A}$ is a linear map called cobracket. It satisfies some axioms dual to those of a Lie algebra together with a compatibility condition with the Lie algebra structure.\\

Let $A$ denote a $\Bbbk$-module. In order to recall the definition of a Lie bialgebra we need two auxiliar linear maps. These are
$$s:A\otimes A\rightarrow A\otimes A \quad \mathrm{and}\quad \ve:A\otimes A\otimes A\rightarrow A\otimes A \otimes A$$ defined by
$$s(x\otimes y)=y\otimes x\quad \mathrm{and}\quad\ve(x\otimes y\otimes z)= z\otimes x\otimes y.$$

\begin{defn}
A \emph{Lie algebra} is given by a $\Bbbk$-module $A$ and a linear map $[\; ,\; ]:A\otimes A\rightarrow A$ such that
$[\;,\;]\circ s=-[\;,\;]$ (skew symmetry) and $[\;,\;]\circ\bigl(id\otimes [\;,\;]\bigr)\circ\bigl(id+\ve+\ve^2\bigr)=0$ (Jacobi identity).
\end{defn}
\begin{defn}
A \emph{Lie coalgebra} is given by a $\Bbbk$-module $A$ and a linear map $\de:A\rt A\otimes A$ such that  $s\circ \de =-\de$ (coskew symmetry) and $\bigl(id+\ve+\ve^2\bigr)\circ\bigl(id\otimes\de\bigr)\circ\de=0$ (co-Jacobi identity).
\end{defn}
\begin{defn}
The triple $\bigl(A,[\;,\;],\de\bigr)$  is a \emph{Lie bialgebra} if $\bigl(A,[\;,\;]\bigr)$ is a Lie algebra, $(A,\de)$ is a Lie coalgebra and the compatibility equation $\de\bigl([x,y]\bigr)=x\cdot\de(y)-y\cdot\de(x)$ holds for every $x,y\in A$, where
$x\cdot(y\otimes z)=[x,y]\otimes z+ y\otimes[x,z]$.
\end{defn}
\begin{defn}
The triple $\bigl(A,[\;,\;],\de\bigr)$ is an \emph{involutive} Lie bialgebra if  $\bigl(A,[\;,\;],\de\bigr)$ is a Lie bialgebra and $[\;,\;]\circ\de=0$ on $A$.

\end{defn}

\subsection{Examples}
\hspace{.5cm}Now we will present some examples of Lie bialgebras. We refer the reader to \cite{majid} for details.

\subsubsection{The 2-dimensional real Lie bialgebra.}
\hspace{.7cm}Let $B=\bigl\langle H,X\bigr\rangle$ the vector space over the complex numbers generated by  $H$ and $X$.\\
We define the linear and skew symmetry map  $[\, ,\,]:B\otimes B\rightarrow B$ by $\bigl[H,X\bigr]=2X$ and the linear
and coskew symmetry map $\de:B\rightarrow B\otimes B$ by $\de(H)=0$ and $\de(X)=\frac{1}{2}\bigl(X\otimes H-H\otimes X\bigr)$.
Then $\bigl(B,[\, ,\,],\de\bigr)$ is a Lie bialgebra.

This Lie bialgebra is self-dual and it is a sub-Lie bialgebra (in the obvious sense) of the following importan example.

\subsubsection{The 3-dimensional complex Lie bialgebra $sl_2$.}
\hspace{.7cm}Let us consider  $sl_2=\bigl\langle H,X_\pm\bigr\rangle$ the $\mb{C}$-vector
space generated by $H$, $X_+$ and $X_-$ with brackets $\bigl[X_+,X_-\bigr]=H$,
$\bigl[H,X_\pm\bigr]=\pm 2X_\pm$ and cobrackets  $\de(H)=0$,
$\de\bigl(X_\pm\bigr)=\frac{1}{2}\bigl(X_\pm\otimes H-H\otimes X_\pm\bigr)$. Then
$\bigl(sl_2,[\, ,\,],\de\bigr)$ is a Lie  bialgebra.

This Lie bialgebra is not self-dual. Let $\bigl\{\Phi,\Psi_\pm\bigr\}$ be the dual basis of $\bigl\{H,X_\pm\bigr\}$. Nevertheless we can find the dual Lie bialgebra as follows.

\subsubsection{The dual Lie bialgebra of $sl_2$.}
\hspace{.7cm}Let $sl_2^*=\bigl\langle \Phi,\Psi_\pm\bigr\rangle$  the dual space of $sl_2$
generated by $\Phi$, $\Psi_+$ and $\Psi_-$. The bracket $[\,
,\,]:sl_2^*\otimes sl_2^*\rightarrow sl_2^*$ is defined by
$\bigl[\Psi_\pm,\Phi\bigr]=\frac {1}{2}\Psi_\pm$, $\bigl[\Psi_+,\Psi_-\bigr]=0$ and
the cobracket $\de:sl_2^*\rightarrow sl_2^*\otimes sl_2^*$ is
defined by $\de\bigl(\Psi\pm\bigr)=\pm 2\bigl(\Phi \otimes\Psi_\pm
-\Psi_\pm\otimes\Phi\bigr)$, $\de(\Phi)=\Psi_+\otimes \Psi_-
-\Psi_-\otimes \Psi_+$. Then $\bigl(sl_2^*,[\, ,\,],\de\bigr)$ is a Lie
bialgebra.

\section{The vector space of cyclic words $\mathbb{V}$.}
\hspace{0.5cm}In this section we construct, from a finite set of symbols, the vector space $\mb{V}$, called the vector space of cyclic word.  In the following sections we will prove that this space admits an additional structure, this is a Lie bialgrebra structure.

\begin{figure}[htp]
    \centering
    \includegraphics{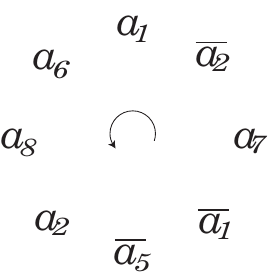}
    \caption{A cyclic word in the letters of $\mathbb{A}_8$.}\label{fig1}
   \end{figure}

\begin{defn}
For each non-negative integer $n$, the $n$-\emph{alphabet} is the
set of $2n$ symbols $\mathbb{A}_n =\bigl\{a_1, a_2,\dots
,a_n,\overline{a_1},\overline{a_2},\dots ,\overline{a_n}\bigr\}.$ We
will consider linear words, denoted with capital roman characters,
and cyclic words, denoted by capital calligraphical characters, both
constructed with letters from $\mathbb{A}_n$.

 A \emph{cyclic word} is a linear word that additionally satisfies the cyclic condition.
 We can think the cyclic words as symbols placed at the vertices of
the $n$-th root of unit in $\mathbb{C}$ up to circular symmetry,
$n=1,2,3,\dots$ see Figure \ref{fig1}.

\end{defn}

 Let $x_0x_1\dots x_m$ be a linear word, by definition, $\overline{x_0x_1\dots x_m} = \overline{x_m}\dots\overline{x_1}\ \overline{x_0}$ and for each letter $x$, $\overline{\overline{x}}=x$. A linear word $x_0x_1\dots x_m$ is \emph{freely reduced} if $x_i\neq \overline{x_{i+1}}$ for each $i\in\{0, 1,\dots ,m\}$.

A linear word $W$ is a \emph{linear representative} of a cyclic word $\WW$ if $W$ can be obtained from $\WW$ by making a cut between two consecutive letters of $\WW$. In such a case, we write $\WW= c(W)$. If
$\WW$ is a cyclic word, $W$ is a linear representative of $\WW$, and $n$ is a positive integer, we define $\WW^n$ as $c\bigl(W^n\bigr)$, $\overline{\WW}$ as $c\bigl(\overline{W}\bigr)$, and $\WW^{-n}$ as $\overline{\WW}^n$.

\begin{defn}
A cyclic word is \emph{reduced} if it is non-empty and all its linear representatives are freely reduced.\\
A reduced cyclic word is \emph{primitive} if it cannot be written as $\WW^r$ for some $r\geq 2$ and some reduced cyclic word $\WW$. The \emph{length} of a linear (respectively cyclic) word $W$ (respectively $\WW$) is the number of letters counted with multiplicity that it contains and it is denoted by $l(W)$ (resp. $l(\WW)$).\\
By a \emph{subword} of a cyclic word $\WW$, we mean a linear subword of one of the linear representatives of $\WW$.
\end{defn}

Let $\mathcal{O}$ be a reduced cyclic word such that every letter of $\mathbb{A}_n$ appears exactly once. The word $\mathcal{O}$ is called a \emph{surface symbol}.

\begin{defn}
To each cyclic word $\mathcal{W}$, we associate a number, $o(\mathcal{W})\in\{-1,0,1\}$ as follows.
\begin{enumerate}
\item[(i)] If $\mathcal{W}$ is reduced and there exists an injective orientation preserving  map, from the letters of $\mathcal{W}$ to the letters of $\mathcal{O}$ then $o(\mathcal{W})=1$.
\item[(ii)] If $\mathcal{W}$ is reduced and there exists an injective orientation reversing  map, from the letters of $\mathcal{W}$ to the letters of $\mathcal{O}$ then $o(\mathcal{W})=-1$.
\item[(iii)]  In all other cases (that is, if $\mathcal{W}$ is not reduced or if there is no such orientation preserving or reversing map) $o(\mathcal{W}) = 0$.
\end{enumerate}
\end{defn}

\begin{defn}\label{d1}
Let $P$ and $Q$ be two linear words. The ordered pair $(P,Q)$ is $\mathcal{O}$-\emph{linked} if $P$ and $Q$ are reduced words of length at least two and one of the following conditions holds:
\begin{enumerate}
\item[(1)] $P=p_1p_2$, $Q=q_1q_2$ and   $o\bigl(c(\ov{p}_1\ov{q}_1p_2q_2)\bigr)\neq 0$;
\item[(2)] $P=p_1Yp_2$, $Q=q_1Yq_2$, $p_1\neq q_1$, $p_2\neq q_2$ and $Y$ is a linear word of length at least one and if $Y=x_1Xx_2$, then $o\bigl(c(\ov{p}_1\ov{q}_1x_1)\bigr)=o\bigl(c(p_2q_2\ov{x}_2)\bigr)$;
\item[(3)] $P=p_1Yp_2$, $Q=q_1\ov{Y}q_2$, $p_1\neq  \ov{q}_2$, $p_2\neq \ov{q}_1$ and $Y$ is a linear word of length at least one and if $Y=x_1Xx_2$, then $o\bigl(c(q_2\ov{p}_1x_1)\bigr)=o\bigl(c(\ov{q}_1p_2\ov{x}_2)\bigr)$.
\end{enumerate}
\end{defn}

Let $\WW$ be a reduced cyclic word and denote by $\mathbb{LP}_1(\WW)$ the set of linked pairs $(P,Q)$, for $P$ and $Q$ are linear subwords of $\WW$.

\begin{defn}
We denote by $\mathbb{V}$ the vector space generated by non-empty reduced cyclic words with letters in $\mathbb{A}_n$.
\end{defn}

A possible geometric interpretation of the definition of linked pair by means of curves on a surface is as
follows.

For talking of linked pair it is necessary to fix a surface symbol, that is, a cyclic word $\mathcal{O}=c\bigl(o_1o_2\dots o_{2n}\bigr)$ such that
every letter of $\mathbb{A}_n$ appears exactly once. Denote by $P_{\mathcal{O}}$ the $4n$-gon with edges labeled counterclockwise
in the following way: one chooses an edge in first place and labels it with $o_1$, the second lacks of label, the third is labeled with
$o_2$, the fourth lacks of label and so on as is shown in the example of figure \ref{fig3}.
\begin{figure}[htp]
    \centering
    \includegraphics{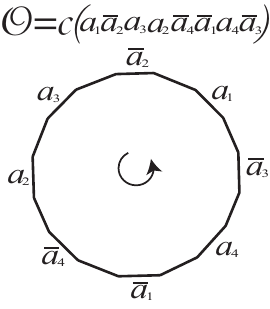}
    \caption{The $4n$-gon $P_{\mathcal{O}}$.}\label{fig3}
   \end{figure}\\
For each $i\in\bigl\{1,2,\dots ,n\bigr\}$, one identifies the edge $a_i$ with the edge $\ov{a_i}$ without creating Moebius bands. In this way, one
gets a surface $\Sigma_{\mathcal{O}}$ with non-empty boundary and Euler characteristic $(1-n)$. Furthermore, every surface with
non-empty boundary can be obtained from such a $4n$-gon. Denote by $\pi:P_{\mathcal{O}}\longrightarrow \Sigma_{\mathcal{O}}$ the obvious
projection map. A \emph{loop} in $\Sigma_{\mathcal{O}}$ is a piecewise smooth map from the circle to $\Sigma_{\mathcal{O}}$. In
these conditions, the set $\bigl\{a_1,a_2,\dots ,a_n\bigr\}$ is a generator of the fundamental group of $\Sigma_{\mathcal{O}}$. If $\WW$ is a
reduced cyclic word in the letters of $\mathbb{A}_n$, a loop $\alpha$ in $\Sigma_{\mathcal{O}}$ is a \emph{representative of}
$\WW$ if $\alpha$ is freely homotopic to the curve $\beta$ where the homotopy class of $\beta$ written in the generators $\bigl\{a_1,a_2,\dots ,a_n\bigl\}$ is a linear representative of $\WW$.\\
The idea of linked pairs is the following: two threads on a surface come close, stay together for some time and then separate. If one thread enters the strip from above and exits below and the other vice versa we must have an intersection. This is measured by linked pairs (see Figure \ref{fig4}).
\begin{figure}[htp]
    \centering
    \includegraphics{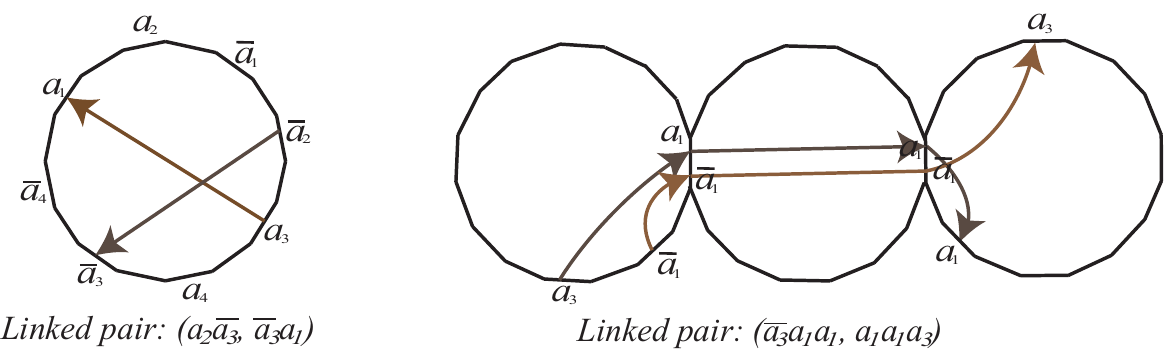}
    \caption{Geometric interpretation of linked pairs of type $(1)$ and type $(2)$.}\label{fig4}
   \end{figure}

\section{The Lie coalgebra structure}
\hspace{.5cm}In this section we define the coalgebra structure on the vector space of reduced cyclic words $\mb{V}$.

\begin{defn}\label{def1}
For each ordered pair $(P,Q)\in\mathbb{LP}_1(\m)$ we associate two cyclic words $\delta_1(P,Q)=c(W_1)$ and $\delta_2(P,Q)=c(W_2)$ as follows:
\begin{enumerate}
\item[(i)] Assume that (1) or (2) of definition \ref{d1} holds. Make two cuts on $\WW$, one immediately before $p_2$ and the other immediately before $q_2$. We obtain two linear words, $W_1$ and $W_2$, the first,
starting at $p_2$, and the second, starting at $q_2$.
\item[(ii)] If condition (3) holds, let $W_1$ be the linear subword of $\WW$ starting at $p_2$ and ending at $q_1$, and let $W_2$ be the linear subword of $\WW$ starting at $q_2$ and ending at $p_1$.
\end{enumerate}
\end{defn}
\begin{figure}[htp]
    \centering
    \includegraphics{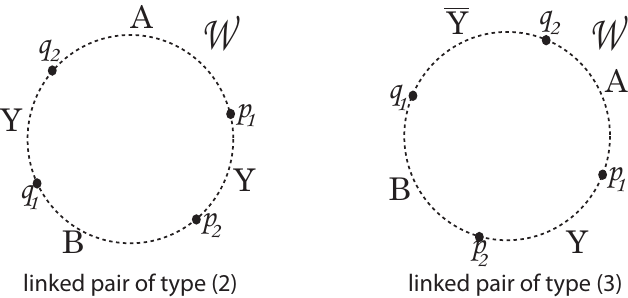}
    \caption{Cyclic words associated to the linked pair $(P,Q)\in\mathbb{LP}_1(\WW)$.}\label{fig2}
   \end{figure}

\begin{lem}\label{le1}
Let $\WW$ be a cyclic reduced word. For each $(P,Q)\in\mathbb{LP}_1(\m)$, the linear words $W_1$ and $W_2$ of the above definition are disjoint in $\WW$. Moreover, $W_1$ and $W_2$ are non-empty and one
can write $\WW= c\bigl(W_1W_2\bigr)$ in the case (i) above and $\WW= c\bigl(YW_1\overline{Y}W_2\bigr)$ in the case (ii).
\end{lem}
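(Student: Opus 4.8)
The plan is to analyze each of the three cases in Definition \ref{d1} separately, since the combinatorial configuration of $P$, $Q$ inside $\WW$ differs in each. The underlying principle in all cases is the same: the defining conditions $p_1 \neq q_1$, $p_2 \neq q_2$ (or the analogous inequalities in case (3)) are precisely what prevent the two cut points from coinciding and what force the subwords to be non-trivial pieces that partition $\WW$.

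First I would set up notation: fix a linear representative of $\WW$ long enough to display both subwords $P$ and $Q$ simultaneously (this is possible because $P$ and $Q$ are subwords of $\WW$, hence of some linear representative, and by passing to a suitable rotation and, if necessary, a power we may assume a single linear representative contains both occurrences). In case (i), where (1) or (2) holds, the two cuts are placed immediately before $p_2$ and immediately before $q_2$. I would argue that these two cut points are distinct: in case (1) the letters $p_2$ and $q_2$ are the ``outgoing'' letters of the two length-two words and a coincidence of cut points would force the pattern $\ov p_1 \ov q_1 p_2 q_2$ to degenerate; in case (2), the condition $p_2 \neq q_2$ together with $p_1 \neq q_1$ and the shared block $Y$ guarantees that $P$ and $Q$ occupy genuinely different positions, so the cut before $p_2$ and the cut before $q_2$ fall at different places in the cyclic word. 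Two distinct cuts on a cyclic word produce two linear words whose concatenation (in the appropriate order) is a linear representative of $\WW$; naming them $W_1$ (starting at $p_2$) and $W_2$ (starting at $q_2$) gives $\WW = c(W_1 W_2)$, and since each piece contains at least the starting letter, both are non-empty, and by construction they share no letter of $\WW$, i.e.\ they are disjoint.

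In case (ii), corresponding to condition (3) of Definition \ref{d1}, the word $Q$ reads $q_1 \ov Y q_2$ while $P$ reads $p_1 Y p_2$, so inside $\WW$ the block $Y$ and its reverse $\ov Y$ appear; here I would show that the cyclic word decomposes as four consecutive arcs: $Y$, then the arc $W_1$ from $p_2$ to $q_1$, then $\ov Y$, then the arc $W_2$ from $q_2$ to $p_1$, giving $\WW = c(Y W_1 \ov Y W_2)$. The inequalities $p_1 \neq \ov q_2$ and $p_2 \neq \ov q_1$ are exactly what is needed to see that $W_1$ and $W_2$ are non-empty (an empty $W_1$ would force $p_2$ to be immediately followed by $q_1$ in a way incompatible with reducedness combined with the reversed-block structure, and similarly for $W_2$), and disjointness of $W_1$, $W_2$ is immediate from the fact that the four arcs $Y$, $W_1$, $\ov Y$, $W_2$ are consecutive and non-overlapping around the cyclic word.

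The main obstacle I anticipate is the bookkeeping in case (ii): one must be careful that the occurrences of $Y$ and $\ov Y$ inside $\WW$ really are disjoint arcs (they cannot overlap each other either, else $\WW$ would fail to be reduced or $P$, $Q$ could not both be subwords in the claimed form), and that ``starting at $p_2$ and ending at $q_1$'' unambiguously specifies an arc once the positions of the $Y$-blocks are pinned down. I would handle this by a length count: writing $l(\WW)$ as the sum $2 l(Y) + l(W_1) + l(W_2)$ and checking this is consistent with $l(P) + l(Q)$ minus the overlaps, which simultaneously confirms non-emptiness and that no letter is used twice. The reducedness hypothesis on $\WW$ is used throughout to rule out the degenerate collapses, and it is the one place where the combinatorics genuinely needs the hypothesis rather than just the defining inequalities.
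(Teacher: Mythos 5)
The paper does not actually prove this lemma: it is one of the statements dispatched with ``The previous results were proved in \cite{moira}'', so there is no in-paper argument to compare yours against. Judged on its own, your outline is the standard argument and is essentially correct. In case (i) the precise reason the two cuts are distinct is that $p_2\neq q_2$ as letters (explicit in condition (2), and forced in condition (1) because $o\bigl(c(\ov{p}_1\ov{q}_1p_2q_2)\bigr)\neq 0$ requires an injective map into $\mathcal{O}$, hence four pairwise distinct letters), and two distinct cuts on a cyclic word automatically yield two non-empty complementary arcs; your ``degeneration'' remark is this fact, just stated loosely. In case (ii) you correctly isolate the two real issues: (a) the occurrences of $Y$ and $\ov{Y}$ cannot overlap, and (b) the complementary arcs are non-empty.

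One caveat: the ``length count'' you propose for (a) is not the right mechanism and would be circular, since writing $l(\WW)=2l(Y)+l(W_1)+l(W_2)$ already presupposes the decomposition you are trying to establish. The correct mechanism is the one you mention in passing: if the two occurrences overlapped, a suffix of $Y$ would coincide, position by position, with a prefix of $\ov{Y}$, and matching these forces either a letter equal to its own bar (odd overlap) or two consecutive letters $y\,\ov{y}$ inside $Y$ (even overlap), contradicting that $P$ (hence $Y$, hence $\WW$) is reduced. For (b), the inequalities $p_2\neq\ov{q}_1$ and $p_1\neq\ov{q}_2$ do exactly the work you claim: if $\ov{Y}$ immediately followed $Y$ in $\WW$ then $p_2$ would be the first letter of $\ov{Y}$ and $q_1$ the last letter of $Y$, giving $p_2=\ov{q}_1$, and symmetrically for $W_2$. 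With those two points made precise your sketch closes up into a complete proof.
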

\begin{prop}
Let $\WW$ be a reduced cyclic word and let $(P,Q)\in\mathbb{LP}_1(\WW)$ be a linked pair. Then $\de_1(P,Q)$ and $\de_2(P,Q)$ (from definition \ref{def1}) are reduced cyclic words. Moreover, $\de_1(P,Q)$ and $\de_2(P,Q)$ are non-empty.
\end{prop}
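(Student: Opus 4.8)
The plan is to use Lemma \ref{le1} as the starting point, so that in each case we already know $W_1$ and $W_2$ are non-empty and disjoint, and that $\WW=c(W_1W_2)$ in cases (1),(2) and $\WW=c(YW_1\overline{Y}W_2)$ in case (3). Since non-emptiness is already handed to us, the only thing to prove is that each $c(W_i)$ is \emph{reduced}, i.e. that every linear representative is freely reduced. A cyclic word $c(W_i)$ fails to be reduced precisely when there is a cancellation $x_j=\overline{x_{j+1}}$ somewhere along $W_i$ read cyclically; since $\WW$ itself is reduced, no such cancellation can occur at a junction that was already a junction inside $\WW$. Hence the only possible point of failure is the \emph{new} junction created by the cut — the place where the last letter of $W_i$ meets the first letter of $W_i$ (for case (i)) or the endpoints specified in case (ii). So the whole proof reduces to checking that these finitely many new junctions are freely reduced, and this is exactly what the inequalities in Definition \ref{d1} are designed to guarantee.

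First I would treat case (i) (conditions (1) and (2)). Write $\WW=c(W_1W_2)$. By Lemma \ref{le1} the word $W_1$ begins with $p_2$ and ends with the letter immediately preceding $q_2$ in $\WW$; call that last letter $z$. The new cyclic junction in $c(W_1)$ is the pair $(z,p_2)$. I claim $z\neq\overline{p_2}$. In the length-two case (1), $W_1$ has the form $p_2\cdots$ ending just before $q_2$, and the relevant hypothesis $o(c(\overline p_1\overline q_1 p_2 q_2))\neq 0$ forces $p_2\neq\overline{q_1}$ and the cyclic word in question to be reduced; one reads off that the wrap-around letter cannot cancel with $p_2$. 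In case (2), where $P=p_1Yp_2$, $Q=q_1Yq_2$ with $p_1\neq q_1$, $p_2\neq q_2$, the letter preceding $q_2$ is the last letter of $Y$ (or $q_1$ if $Y$ has length one), and here the orientation condition together with the reducedness of $\WW$ and the inequalities $p_1\neq q_1$, $p_2\neq q_2$ rule out cancellation at the junction; symmetrically for $c(W_2)$, whose new junction involves the letter before $p_2$ and the letter $q_2$. The key point throughout is that the letters adjacent to the cuts in $\WW$ are, by construction, among $p_1,q_1$ (on one side) and among the letters of $Y$ or $p_2,q_2$ (on the other), so the inequalities in Definition \ref{d1} literally say the new junctions do not cancel.

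Then I would handle case (ii), corresponding to condition (3), where $\WW=c(YW_1\overline Y W_2)$ with $P=p_1Yp_2$, $Q=q_1\overline Y q_2$, and $W_1$ runs from $p_2$ to $q_1$ while $W_2$ runs from $q_2$ to $p_1$. Now $c(W_1)$ has a new junction between $q_1$ (its last letter) and $p_2$ (its first letter), and $c(W_2)$ between $p_1$ and $q_2$. The hypotheses $p_2\neq\overline{q_1}$ and $p_1\neq\overline{q_2}$ in Definition \ref{d1}(3) are exactly the statements that these two junctions are freely reduced. As in case (i), all interior junctions of $W_1$ and $W_2$ are inherited from $\WW$ and hence already reduced. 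Finally, to assemble these observations into a clean argument I would state once, as a preliminary remark, that for any reduced cyclic word $\WW$ and any linear subword $U$ of $\WW$, the cyclic word $c(U)$ is reduced if and only if the first and last letters $u_0,u_k$ of $U$ satisfy $u_0\neq\overline{u_k}$; this turns each case into a single inequality check.

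I expect the main obstacle to be bookkeeping rather than any real difficulty: one must carefully identify, in each of the three cases, exactly which letters of $\WW$ sit next to the cuts, keeping track of the degenerate subcases where $Y$ (or $X$ inside $Y$) has length one, and then match each required inequality against the hypotheses listed in Definition \ref{d1}. The orientation conditions $o(\cdots)=o(\cdots)$ and $o(\cdots)\neq 0$ do not enter the reducedness claim directly — they are there to make the pair genuinely ``linked'' — so the proof of this proposition uses only the reducedness of $\WW$ together with the explicit inequalities ($p_1\neq q_1$, $p_2\neq q_2$, $p_1\neq\overline{q_2}$, $p_2\neq\overline{q_1}$, and the non-vanishing in case (1)). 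Once the correspondence between cuts and inequalities is laid out, each case is a one-line verification.
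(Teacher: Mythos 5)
The paper does not actually prove this proposition: it is one of the statements dispatched with the sentence ``The previous results were proved in \cite{moira}.'' So your proposal supplies an argument where the paper gives only a citation, and the argument you outline is the right one (it is essentially the proof in Chas's paper): non-emptiness comes from Lemma \ref{le1}, every interior adjacency of $W_i$ is an adjacency of the reduced word $\WW$, and the only thing to check is the single wrap-around junction of each $c(W_i)$. Your reduction of the whole proposition to the preliminary remark ``$c(U)$ is reduced iff its first and last letters do not cancel'' is exactly the right organizing principle, and your treatment of case (3) is precise: the junctions $q_1p_2$ and $p_1q_2$ are reduced precisely because of the hypotheses $p_2\neq\overline{q_1}$ and $p_1\neq\overline{q_2}$.

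Two spots deserve sharper justification, though neither is a fatal gap. First, in case (1) the inequalities you need are $q_1\neq\overline{p_2}$ and $p_1\neq\overline{q_2}$; mere reducedness of $c(\overline{p_1}\,\overline{q_1}p_2q_2)$ only yields $q_1\neq p_2$ and $p_1\neq q_2$, which is not the same thing. What actually delivers the needed inequalities is the \emph{injectivity} built into $o(\cdot)\neq 0$: the four letters $\overline{p_1},\overline{q_1},p_2,q_2$ must be pairwise distinct, and $\overline{q_1}\neq p_2$, $\overline{p_1}\neq q_2$ are exactly the junction conditions. You should say this explicitly rather than ``one reads off.'' Second, in case (2) you invoke ``the orientation condition together with $p_1\neq q_1$, $p_2\neq q_2$,'' but none of that is needed: the letter preceding the cut before $q_2$ is $x_2$, the last letter of $Y$ in the occurrence of $Q$, and the pair $x_2p_2$ already occurs consecutively inside the occurrence of $P$ in $\WW$; reducedness of $\WW$ alone gives $x_2\neq\overline{p_2}$, and symmetrically $x_2\neq\overline{q_2}$ for the other junction. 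Making these two points explicit turns your sketch into a complete proof.
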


\begin{defn}
 To each linked pair $(P,Q)$ one associates a \emph{sign} as follows:
\begin{enumerate}
\item[(i)] If $(P,Q)$ is a linked pair of type (1), then $sign(P,Q):=o\bigl(c(\overline{p_1}\ \overline{q_1}p_2q_2)\bigr)$;
\item[(ii)] if  $(P,Q)$ is a linked pair of type (2), then $sign(P,Q):=o\bigl(c(\overline{p_1}\ \overline{q_1}x_1)\bigr)$;
\item[(iii)] if  $(P,Q)$ is a linked pair of type (3), then $sign(P,Q):=o\bigl(c(q_2\overline{p_1}x_1)\bigr).$
\end{enumerate}
\end{defn}
\begin{lem}\label{le2}
\begin{enumerate}
\item[(a)] For every linked pair $(P,Q)$, $sign(P,Q) = 1$ or $sign(P,Q) = -1$.
\item[(b)] If $(P,Q)$ is a linked pair, then $(Q,P)$ is also a linked pair. Moreover, $sign(P,Q)=-sign(Q,P)$.
\end{enumerate}
\end{lem}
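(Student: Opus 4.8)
\textbf{Proof plan for Lemma \ref{le2}.}

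For part (a), the claim is essentially a bookkeeping consequence of the definition of being a linked pair. The plan is to go through the three types one at a time. For each type, the relevant sign is defined as $o$ of a length-$3$ cyclic word (or length-$4$ in type (1)) built from letters of $\WW$. By Definition \ref{d1}, the hypothesis that $(P,Q)$ is a linked pair already forces the $o$-value appearing in $sign(P,Q)$ to be nonzero: in type (1) this is literally the condition $o(c(\ov{p}_1\ov{q}_1p_2q_2))\neq 0$; in types (2) and (3) the defining condition is an equality of two $o$-values, and one checks that if either side were $0$ the pair would not satisfy the (implicit) nondegeneracy built into the definition, so the common value is $\pm 1$. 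Since $o$ takes values in $\{-1,0,1\}$ by construction, ruling out $0$ gives $sign(P,Q)\in\{-1,1\}$. The one point requiring a little care is checking that the short cyclic words involved are genuinely reduced (otherwise $o=0$ by clause (iii) of the definition of $o$); this follows because $P$ and $Q$ are reduced subwords of the reduced cyclic word $\WW$ and the letters being juxtaposed are forced to be distinct from the barred versions of their neighbors by the side conditions $p_1\neq q_1$, $p_2\neq q_2$, $p_1\neq\ov{q}_2$, $p_2\neq\ov{q}_1$ in Definition \ref{d1}.

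For part (b), I would again argue type by type. First, the symmetry of the defining conditions: in type (1), swapping $(P,Q)\mapsto(Q,P)$ replaces $o(c(\ov{p}_1\ov{q}_1p_2q_2))$ by $o(c(\ov{q}_1\ov{p}_1q_2p_2))$, and these two cyclic words differ by the permutation of letters realized by a reflection, hence $o$ of one is $-o$ of the other (an orientation-preserving injection into $\OO$ for one is an orientation-reversing injection for the other, and vice versa). This simultaneously shows $(Q,P)$ is again a linked pair (the nonvanishing condition is preserved) and that $sign(Q,P)=-sign(P,Q)$. Types (2) and (3) are analogous: under $(P,Q)\mapsto(Q,P)$ the role of $Y$ is preserved in type (2) and $Y\leftrightarrow\ov Y$ in type (3), the side inequalities are symmetric, and the word whose $o$-value defines the sign again gets transformed by a letter-reflection, flipping the sign. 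The key structural fact I would isolate and state as a sub-claim is: \emph{if $\WW'$ is obtained from a cyclic word $\WW$ by reversing the cyclic order of its letters and barring each letter, then $o(\WW')=-o(\WW)$} — equivalently $o(\ov{\WW})=-o(\WW)$ for the operation $\ov{\,\cdot\,}$ already defined in the paper. Granting this sub-claim, part (b) is immediate in each case.

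The main obstacle is making the sub-claim about $o$ precise and verifying it against the definition of $o$, since $o$ is defined via the existence of injective orientation-preserving/reversing maps from the letters of a cyclic word into the surface symbol $\OO$, and I need to confirm that precomposing with the order-reversing-and-barring involution interchanges "orientation preserving" with "orientation reversing." Concretely, if $f$ is an orientation-preserving injection from the letters of $\WW$ to those of $\OO$, I must produce from it an orientation-reversing injection from the letters of $\ov\WW$ to those of $\OO$ (and check the converse), and similarly handle the reversing case; the bookkeeping is routine but is exactly where one must be careful about what "orientation" means for a cyclic subword relative to $\OO$ and how barring a letter $x$ interacts with the image $f(x)$ versus $\ov{f(x)}$. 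Once that is pinned down, everything else is a direct unwinding of Definitions \ref{d1} and the $sign$ definition. I would therefore organize the write-up as: (1) prove the sub-claim on $o$ and $\ov{\,\cdot\,}$; (2) deduce (a) by checking the three nonvanishing conditions; (3) deduce (b) by checking the three cases against the sub-claim.
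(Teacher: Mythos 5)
The paper does not actually prove Lemma \ref{le2}: it is stated and immediately attributed to \cite{moira} (``The previous results were proved in \cite{moira}''), so there is no in-paper argument to compare yours against. Judged on its own, your overall strategy --- a case analysis over the three types, reducing everything to how $o$ behaves under a symmetry --- is the right one, but the sub-claim you isolate is not the one you need, and the nonvanishing in part (a) is less automatic than you suggest. For (b): the transformation relating $sign(P,Q)$ to $sign(Q,P)$ is a \emph{pure reversal of the cyclic order with the same letters}, not the operation $\ov{\,\cdot\,}$. In type (1), $c(\ov{q}_1\ov{p}_1q_2p_2)$ is obtained from $c(\ov{p}_1\ov{q}_1p_2q_2)$ by the permutation $(1\,2)(3\,4)$, which on four cyclically arranged positions is exactly order reversal; by contrast $\ov{c(\ov{p}_1\ov{q}_1p_2q_2)}=c(\ov{q}_2\,\ov{p}_2\,q_1p_1)$ is a different word altogether. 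The fact that reversing the cyclic order negates $o$ is immediate from the definition of orientation preserving/reversing injections, whereas $o(\ov{\WW})=-o(\WW)$ would additionally require that $x\mapsto\ov{x}$ preserve the cyclic order of the letters of $\OO$ --- a property of the chosen surface symbol that is beside the point here. The ``main obstacle'' you flag in your last paragraph evaporates once you use the correct sub-claim. Also, in type (3) the word computing $sign(Q,P)$ is $c(p_2\ov{q}_1\ov{x}_2)$ (it involves $\ov{x}_2$, the first letter of $\ov{Y}$), so you must invoke the defining equality $o\bigl(c(q_2\ov{p}_1x_1)\bigr)=o\bigl(c(\ov{q}_1p_2\ov{x}_2)\bigr)$ to convert it back to $-sign(P,Q)$; your sketch glosses over this step.

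For (a), types (2) and (3): there is no ``implicit nondegeneracy'' in Definition \ref{d1} --- the condition is only an equality of two $o$-values, so the burden is to prove both are nonzero. Pairwise distinctness of the letters of $c(\ov{p}_1\ov{q}_1x_1)$ does follow from $p_1\ne q_1$ together with reducedness of $P$ and $Q$ (giving $\ov{p}_1\ne x_1$ and $\ov{q}_1\ne x_1$), and for three distinct letters an injection into $\OO$ is automatically orientation preserving or reversing. But under the paper's stated definition of $o$ the short cyclic word must also be \emph{reduced}, and the side conditions you list ($p_1\ne q_1$, $p_2\ne q_2$, $p_1\ne\ov{q}_2$, $p_2\ne\ov{q}_1$) do not rule out, e.g., $q_1=x_1$ or $p_1=\ov{q}_1$, either of which creates a cancelling adjacent pair in $c(\ov{p}_1\ov{q}_1x_1)$. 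Closing this requires either the sharper form of the definition of $o$ used in \cite{moira} or an extra argument excluding these configurations; as written, this step is a genuine gap.
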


The previous results were proved in \cite{moira}.

\begin{defn}
We define $\de:\mathbb{V}\longrightarrow\mathbb{V}\otimes\mathbb{V}$ as the linear map such that for every reduced cyclic word $\WW$,
$$\de(\WW)=\sum_{(P,Q)\in\mathbb{LP}_1(\WW)}sign(P,Q)\de_1(P,Q)\otimes\de_2(P,Q).$$
This sum is finite as a consequence that the set $\mathbb{LP}_1(\WW)$ is finite.
\end{defn}
\begin{thm}\label{t1}
$(\mathbb{V},\de)$ is a Lie coalgebra.
\end{thm}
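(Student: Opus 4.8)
\emph{Plan of proof.} There are two things to establish: coskew symmetry $s\circ\de=-\de$, and the co-Jacobi identity $\bigl(id+\ve+\ve^2\bigr)\circ\bigl(id\otimes\de\bigr)\circ\de=0$. I would dispose of the first, which is essentially formal, and then spend the bulk of the effort on the second.

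\emph{Coskew symmetry.} From the definition, $s\circ\de(\WW)=\sum_{(P,Q)\in\mathbb{LP}_1(\WW)}sign(P,Q)\,\de_2(P,Q)\otimes\de_1(P,Q)$. The point is that the assignment $(P,Q)\mapsto\bigl(\de_1(P,Q),\de_2(P,Q)\bigr)$ of Definition \ref{def1} is symmetric under exchanging the two words of a linked pair: inspecting the three cases one reads off $\de_1(Q,P)=\de_2(P,Q)$ and $\de_2(Q,P)=\de_1(P,Q)$ for every $(P,Q)$. (In cases (1) and (2) the two cuts made on $\WW$ coincide for $(P,Q)$ and for $(Q,P)$, only the two starting points being interchanged; in case (3), rewriting $Q=q_1\ov Yq_2$ as ``$p_1'\,Y'\,p_2'$'' with $Y'=\ov Y$ identifies the subword defining $\de_1(Q,P)$ with the one defining $\de_2(P,Q)$.) Since Lemma \ref{le2}(b) tells us that $(Q,P)$ is again a linked pair of $\WW$ with $sign(Q,P)=-sign(P,Q)$, reindexing the sum by $(P,Q)\mapsto(Q,P)$ immediately gives $s\circ\de(\WW)=-\de(\WW)$.

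\emph{Setting up co-Jacobi.} Expanding twice,
$$\bigl(id\otimes\de\bigr)\circ\de(\WW)=\sum_{(P,Q)\in\mathbb{LP}_1(\WW)}\;\sum_{(P',Q')\in\mathbb{LP}_1(\de_2(P,Q))}sign(P,Q)\,sign(P',Q')\;\de_1(P,Q)\otimes\de_1(P',Q')\otimes\de_2(P',Q'),$$
a sum over pairs of linked pairs, the second living in the cyclic word $\de_2(P,Q)$. My plan is to re-index this by data attached directly to $\WW$. By Lemma \ref{le1} one has $\WW=c(W_1W_2)$ (when (1) or (2) holds for $(P,Q)$) or $\WW=c(YW_1\ov YW_2)$ (when (3) holds), with $\de_2(P,Q)=c(W_2)$; a linked pair of $c(W_2)$ is then either a linked pair of $\WW$ supported inside $W_2$, or one that wraps around the cut producing $W_2$, in which case it corresponds to a linked pair of $\WW$ whose two $\de$-pieces both meet $W_1$. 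One also notes that $sign$ of a linked pair --- being $o$ applied to a word of length three or four formed from the letters next to the cuts --- is unchanged on passing between $\WW$ and $\de_2(P,Q)$, so the product $sign(P,Q)\,sign(P',Q')$ is an invariant of the \emph{configuration} the two linked pairs form inside $\WW$.

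\emph{The cancellation, and where it gets hard.} I would then group the summands according to the unordered pair of linked pairs and its relative position in $\WW$ (disjoint supports; one nested in a $\de$-piece of the other; a shared connecting word $Y$; and the corresponding overlaps for the types (1), (2), (3) of Definition \ref{d1}), and show that each group's total contribution lies in the kernel of $id+\ve+\ve^2$. Carrying out the two linked pairs in one order and then in the other produces, for each group, a small packet of terms whose typical shape is $\mathcal A\otimes\bigl(\mathcal B\otimes\mathcal C-\mathcal C\otimes\mathcal B\bigr)-\mathcal C\otimes\bigl(\mathcal A\otimes\mathcal B-\mathcal B\otimes\mathcal A\bigr)$, the relative signs being forced by the rule $sign(P,Q)=-sign(Q,P)$; a direct check shows any element of this shape is annihilated by $id+\ve+\ve^2$. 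This mirrors Turaev's geometric proof, in which $\de$ smooths a self-intersection of a representative curve and $\bigl(id\otimes\de\bigr)\de$ then smooths a self-intersection of one of the resulting pieces. The hard part is entirely the bookkeeping: enumerating the possible relative positions of two linked pairs in a reduced cyclic word and, for each, identifying the partner configuration that supplies the cancelling terms together with its sign. The genuinely delicate cases are the type-(3) linked pairs, where a subword and its reverse $Y,\ov Y$ both occur so that the nesting of a second linked pair splits into several sub-shapes, and the ``wrap-around'' linked pairs of $\de_2(P,Q)=c(W_2)$, whose translation back into a configuration on $\WW$ is least transparent and where one must be careful that no term is dropped or double-counted. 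Once this dictionary is in place, each individual cancellation is the short computation indicated above, so the remaining work is lengthy but routine.
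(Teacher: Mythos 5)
Your strategy coincides with the paper's. The coskew-symmetry argument is complete and is exactly the one given there: the identities $\de_1(Q,P)=\de_2(P,Q)$ and $\de_2(Q,P)=\de_1(P,Q)$ (Lemma \ref{le2.4.1}) together with $sign(Q,P)=-sign(P,Q)$ (Lemma \ref{le2}) let you reindex the sum. For the co-Jacobi identity your pairing-and-cancellation scheme is also the paper's: expand $(id\otimes\de)\circ\de$ over pairs of linked pairs, match each configuration performed in one order with the same configuration performed in the other order, and check that the resulting four-term packet is annihilated by $id+\ve+\ve^2$; your packet $\mathcal A\otimes(\mathcal B\otimes\mathcal C-\mathcal C\otimes\mathcal B)-\mathcal C\otimes(\mathcal A\otimes\mathcal B-\mathcal B\otimes\mathcal A)$ is indeed in the kernel, and this is the computation the paper performs at the end.

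The gap is that the ``dictionary'' you defer as lengthy but routine is the actual mathematical content of the co-Jacobi identity, and your proposal does not establish it. Concretely, for the pairing to work you need three things: (i) if $(R,S)\in\mathbb{LP}_1\bigl(\de_i(P,Q)\bigr)$ then $(P,Q)\in\mathbb{LP}_1\bigl(\de_2(R,S)\bigr)$, so that the partner term genuinely occurs in the double sum; (ii) the three cyclic words obtained by cutting first along $(P,Q)$ and then along $(R,S)$ coincide, piece by piece, with those obtained in the opposite order; and (iii) this correspondence is an involution on the index set (no term dropped or double-counted) and preserves the product of signs. The paper isolates (i) as Proposition \ref{p3} and (ii) as the corollary immediately following it, and already (i) requires a case analysis over the types (1), (2), (3) of Definition \ref{d1} for each of the two linked pairs and over whether the second pair lives in $\de_1(P,Q)$ or $\de_2(P,Q)$. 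Your description of where the second pair can sit (inside $W_2$, or wrapping around the cut) is the right starting point, but until these cases are worked out --- in particular the type-(3) configurations involving $Y$ and $\overline{Y}$ that you yourself flag --- what you have is a plan rather than a proof. I would not describe the remaining work as routine: it is precisely where the definitions of linked pair and of $\de_1,\de_2$ are actually used, and it is the part of the argument where an error or an omitted configuration would go undetected.
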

To give a combinatory proof of this result we need a series of technical lemmas.

\begin{lem}\label{le2.4.1}
Let $\ma{W}$ be a reduced cyclic word. If
$(P,Q)\in\mathbb{LP}_1(\m)$, then $\de_1(P,Q)=\de_2(Q,P)$ and
$\de_2(P,Q)=\de_1(Q,P)$.
\end{lem}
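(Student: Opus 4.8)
\textbf{Proof plan for Lemma \ref{le2.4.1}.}

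The plan is to unwind the definitions of $\de_1$ and $\de_2$ from Definition \ref{def1} and check the asserted equality case by case, using Lemma \ref{le2}(b), which guarantees that if $(P,Q)$ is a linked pair then so is $(Q,P)$. First I would recall from Definition \ref{d1} that the three types of linked pair are symmetric in $P$ and $Q$ in a precise sense: in type (1) interchanging $(P,Q)$ swaps the roles $p_i \leftrightarrow q_i$; in type (2) it swaps $p_i\leftrightarrow q_i$ while fixing $Y$; and in type (3) the pair $(Q,P)$ is again of type (3) with the ``middle block'' $\ov{Y}$ in place of $Y$, so that the subword labelled $Y$ for $(P,Q)$ becomes the subword labelled $\ov Y$ for $(Q,P)$. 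I would make these correspondences explicit before doing any computation, since the whole lemma reduces to tracking how the two cuts (or the two subword endpoints) are relabelled under $P\leftrightarrow Q$.

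The core of the argument is then the following. In cases (1) and (2), $\de_1(P,Q)=c(W_1)$ where $W_1$ is the linear word obtained from $\WW$ by cutting immediately before $p_2$ and immediately before $q_2$, and taking the piece that \emph{starts} at $p_2$; similarly $\de_2(P,Q)=c(W_2)$ is the piece that starts at $q_2$. For the pair $(Q,P)$ the two cut positions are literally the same positions on $\WW$ (one before $q_2$, one before $p_2$), but now ``$\de_1$'' is defined to be the piece starting at the second letter of the pair $(Q,P)$, i.e.\ the piece starting at $p_2$, which is exactly $W_1$; hence $\de_1(Q,P)=c(W_1)=\de_2(P,Q)$, and symmetrically $\de_2(Q,P)=c(W_2)=\de_1(P,Q)$. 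In case (3), $\de_1(P,Q)=c(W_1)$ with $W_1$ the subword of $\WW$ running from $p_2$ to $q_1$, and $\de_2(P,Q)=c(W_2)$ with $W_2$ running from $q_2$ to $p_1$. Passing to $(Q,P)$, which is of type (3) with $\bigl(p_1',Y',p_2'\bigr)=\bigl(q_1,\ov Y,q_2\bigr)$ and $\bigl(q_1',q_2'\bigr)=\bigl(p_1,p_2\bigr)$, the definition gives $\de_1(Q,P)=c$ of the subword from $p_2'=q_2$ to $q_1'=p_1$, which is $W_2$, and $\de_2(Q,P)=c$ of the subword from $q_2'=p_2$ to $p_1'=q_1$, which is $W_1$. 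Thus $\de_1(Q,P)=\de_2(P,Q)$ and $\de_2(Q,P)=\de_1(P,Q)$ again. Here one uses Lemma \ref{le1} (specifically the decomposition $\WW=c(YW_1\ov Y W_2)$ in case (ii)) to be sure that the subwords in question are well defined and non-empty, so that the relabelling of endpoints is unambiguous.

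The main obstacle I anticipate is purely bookkeeping: in case (3) one must be careful that the interval ``from $p_2$ to $q_1$'' inside $\WW$ really does correspond, under the identification of $(Q,P)$ as a type-(3) pair, to the interval ``from $q_2'$ to $p_1'$'' and not to its complement, because reversing $P\leftrightarrow Q$ interacts with the orientation-reversal built into the $\ov Y$ in condition (3). I would therefore carry along the explicit cyclic decomposition $\WW = c\bigl(Y W_1 \ov Y W_2\bigr)$ from Lemma \ref{le1} and verify directly that reading the same cyclic word with the block $\ov Y$ singled out as the ``$Y$ of $(Q,P)$'' interchanges the two complementary arcs $W_1$ and $W_2$, which is exactly the claim. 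Once that is pinned down, cases (1) and (2) are immediate from the symmetry of the two cut points, and the lemma follows.
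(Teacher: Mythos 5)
Your overall strategy is exactly the paper's: unwind Definition \ref{def1} for the reversed pair $(Q,P)$, observe that the two cut positions (resp.\ the two subword endpoints in type (3)) are the same positions on $\WW$, and track which piece acquires which label. Your treatment of type (3), including the relabelling $(p_1',Y',p_2')=(q_1,\ov{Y},q_2)$, $(q_1',q_2')=(p_1,p_2)$ and the conclusion $\de_1(Q,P)=c(W_2)$, $\de_2(Q,P)=c(W_1)$, is correct and is essentially what the paper does with the explicit decompositions $W_1=p_2Bq_1$, $W_2=q_2Ap_1$.

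However, in cases (1)--(2) your key sentence is internally inconsistent and, read literally, wrong. By Definition \ref{def1}, $\de_1$ of a pair is the piece starting at the last letter of the \emph{first} word of that pair; for $(Q,P)$ the first word is $Q$, so $\de_1(Q,P)$ starts at $q_2$ and equals $c(W_2)$, not "the piece starting at $p_2$, which is exactly $W_1$" as you wrote. The chain $\de_1(Q,P)=c(W_1)=\de_2(P,Q)$ cannot hold, since you defined $\de_2(P,Q)=c(W_2)$; if $\de_1(Q,P)$ really were $c(W_1)$ you would be proving $\de_1(Q,P)=\de_1(P,Q)$, which is not the lemma. The intended (and correct) chain is $\de_1(Q,P)=c(W_2)=\de_2(P,Q)$ and $\de_2(Q,P)=c(W_1)=\de_1(P,Q)$. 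Since this lemma is nothing but this bookkeeping, the slip must be repaired, but once the labels are straightened out the argument goes through and coincides with the paper's proof.
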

\begin{proof}
We only need to prove the first identification because the second follows immediately by considering the pair $(Q,P)$ instead of $(P,Q)$.

For $\de_1(\WW)=c(W_1)$ and $\de_2(P,Q)=c(W_2)$ as in definition \ref{def1}.
\begin{enumerate}
\item[(i)] If $(P,Q)$ is a linked pair of type $(1)$, then we have $W_1=p_2Bq_1$ and  $W_2=q_2Ap_1$. Consequently $\de_1(P,Q)=c(p_2Bq_1)$. By the other hand, applying the Lemma \ref{le2}, $(Q,P)\in\mathbb{LP}_1(\WW)$ and $\de_2(Q,P)=c(V_2)$ where $V_2=p_2Bq_1$. Finally, $\de_2(Q,P)=c\bigl(p_2Bq_1\bigr)=\de_1(P,Q)$.
\item[(ii)] If $(P,Q)$ is a linked pair of type $(2)$ then $W_1=p_2Bq_1Y$ and $W_2=q_2Ap_1Y$. Therefore $\de_1(P,Q)=c(p_2Bq_1Y)$. As $(Q,P)\in\mathbb{LP}_1(\WW)$ we have $\de_2(Q,P)=c(V_2)$ where $V_2=p_2Bq_1Y$. Then $\de_2(Q,P)=c\bigl(p_2Bq_1Y\bigr)=\de_1(P,Q)$.
\item[(iii)]  Finally, if $(P,Q)$ is a linked pair of type $(3)$, we have $W_1=p_2Bq_1$, $W_2=q_2Ap_1$ and $\de_1(P,Q)=c\bigl(p_2Bq_1\bigr)$. As before $(Q,P)\in\mathbb{LP}_1(\WW)$ and $\de_2(Q,P)=c(V_2)$ where $V_2=p_2Bq_1$. Then $\de_2(Q,P)=c\bigl(p_2Bq_1\bigr)=\de_1(P,Q)$.
\end{enumerate}
\end{proof}

\begin{prop}\label{p2}
For $\ma{W}$ a reduced cyclic word the cobracket $\delta$ satisfies the following identity
 $$\de(\WW)=\sum_{\bigl\{(P,Q),(Q,P)\bigr\}\subset\mathbb{LP}_1(\m)}sign(P,Q)\bigl\{\de_1(P,Q)\otimes\de_2(P,Q)-\de_2(P,Q)\otimes\de_1(P,Q)\bigr\}.$$
\end{prop}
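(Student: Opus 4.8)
The plan is to start from the definition $\de(\WW)=\sum_{(P,Q)\in\mathbb{LP}_1(\WW)}sign(P,Q)\,\de_1(P,Q)\otimes\de_2(P,Q)$ and to reorganize the sum by grouping together, for each unordered pair $\{(P,Q),(Q,P)\}$ of linked pairs, the two corresponding terms. By Lemma \ref{le2}(b), whenever $(P,Q)$ is a linked pair so is $(Q,P)$, and the map $(P,Q)\mapsto(Q,P)$ is an involution on $\mathbb{LP}_1(\WW)$ without fixed points (since $P\neq Q$ is forced by the conditions $p_1\neq q_1$, $p_2\neq q_2$ etc. in Definition \ref{d1}; in type $(1)$ one should check $P\neq Q$ separately, but if $P=Q$ then $p_1=q_1$, making $o(c(\ov p_1\ov p_1 p_2p_2))=0$ because that word is not reduced, contradicting linkedness). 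Hence $\mathbb{LP}_1(\WW)$ is partitioned into two-element orbits $\{(P,Q),(Q,P)\}$, and the sum defining $\de(\WW)$ can be rewritten as a sum over these orbits, each orbit contributing the two terms coming from $(P,Q)$ and from $(Q,P)$.

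Next I would compute the contribution of a single orbit $\{(P,Q),(Q,P)\}$. The term for $(P,Q)$ is $sign(P,Q)\,\de_1(P,Q)\otimes\de_2(P,Q)$. For the term coming from $(Q,P)$, apply Lemma \ref{le2}(b), which gives $sign(Q,P)=-sign(P,Q)$, and Lemma \ref{le2.4.1}, which gives $\de_1(Q,P)=\de_2(P,Q)$ and $\de_2(Q,P)=\de_1(P,Q)$. Therefore the $(Q,P)$ term equals $-sign(P,Q)\,\de_2(P,Q)\otimes\de_1(P,Q)$. Adding the two contributions of the orbit yields exactly $sign(P,Q)\bigl\{\de_1(P,Q)\otimes\de_2(P,Q)-\de_2(P,Q)\otimes\de_1(P,Q)\bigr\}$, which is the summand appearing in the claimed identity.

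Finally, I would note that this orbit contribution is independent of which representative of the orbit we picked: replacing $(P,Q)$ by $(Q,P)$ multiplies $sign$ by $-1$ and swaps the roles of $\de_1$ and $\de_2$ (by the same two lemmas), so the expression $sign(P,Q)\{\de_1(P,Q)\otimes\de_2(P,Q)-\de_2(P,Q)\otimes\de_1(P,Q)\}$ is unchanged. Hence summing these well-defined orbit contributions over all orbits $\{(P,Q),(Q,P)\}\subset\mathbb{LP}_1(\WW)$ reproduces $\de(\WW)$, which is the assertion of the proposition. The only genuinely delicate point is the verification that the involution has no fixed points, i.e. that $(P,Q)\in\mathbb{LP}_1(\WW)$ always forces $P\neq Q$ — in types $(2)$ and $(3)$ this is immediate from the explicit inequalities in Definition \ref{d1}, and in type $(1)$ it follows because $P=Q$ would make the relevant cyclic word non-reduced, forcing $o=0$; everything else is a routine bookkeeping of signs and of the two lemmas already at our disposal.
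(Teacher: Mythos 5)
Your proposal is correct and follows essentially the same route as the paper: regroup the sum over $\mathbb{LP}_1(\WW)$ into orbits $\{(P,Q),(Q,P)\}$, then apply Lemma \ref{le2}(b) for the sign and Lemma \ref{le2.4.1} to swap $\de_1$ and $\de_2$. Your extra verification that the involution $(P,Q)\mapsto(Q,P)$ has no fixed points is a reasonable refinement the paper leaves implicit (though in type $(1)$ the cyclic word $c(\ov{p}_1\ov{p}_1p_2p_2)$ can in fact be reduced; the reason $o=0$ there is rather that the repeated letter $\ov{p}_1$ prevents an injective map to the letters of $\mathcal{O}$).
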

\begin{proof}
$$\begin{array}{rcl}
   \de(\WW)&= & \displaystyle{\sum_{(P,Q)\in\mathbb{LP}_1(\WW)}sign(P,Q)\de_1(P,Q)\otimes\de_2(P,Q)} \\
    & = &\displaystyle{ \sum_{\bigl\{(P,Q),(Q,P)\bigr\}\subset\mathbb{LP}_1(\WW)}sign(P,Q)\de_1(P,Q)\otimes\de_2(P,Q)+sign(Q,P)\de_1(Q,P)\otimes\de_2(Q,P)} \\
    & = & \displaystyle{\sum_{\bigl\{(P,Q),(Q,P)\bigr\}\subset\mathbb{LP}_1(\WW)}sign(P,Q)\bigl\{\de_1(P,Q)\otimes\de_2(P,Q) -\de_1(Q,P)\otimes\de_1(Q,P)\bigr\} }\\
    & = & \displaystyle{\sum_{\bigl\{(P,Q),(Q,P)\bigr\}\subset\mathbb{LP}_1(\WW)}sign(P,Q)\bigl\{\de_1(P,Q)\otimes\de_2(P,Q) -\de_2(P,Q)\otimes\de_1(P,Q)\bigr\}}
 \end{array}$$
 \end{proof}

For $(P,Q)\in\mathbb{LP}_1(\WW)$ we take $(R,S)\in\mathbb{LP}_1\bigl(\de_i(P,Q)\bigr)$, where $i\in\{1,2\}$. We can cut $\de_i(P,Q)$ by $(R,S)$ and construct the cyclic words $\de_j(R,S)_{\de_i(P,Q)}$, where $j\in\{1,2\}$. Denote by $\de_{ji}(R,S)$ these new cyclic words.

\begin{prop}\label{p3}
Let $\WW$ be a cyclic reduced word. For each $(P,Q)\in\mathbb{LP}_1(\WW)$ and $(R,S)\in\mathbb{LP}_1\bigl(\de_i(P,Q)\bigr)$ the linked pair $(P,Q)$ belongs to the set $\mathbb{LP}_1\bigl(\de_2(R,S)\bigr)$.
\end{prop}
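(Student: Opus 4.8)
The statement is a combinatorial bookkeeping claim: if we split $\WW$ along a linked pair $(P,Q)$ into the cyclic words $\de_1(P,Q)$ and $\de_2(P,Q)$, then split one of these, say $\de_i(P,Q)$, along a linked pair $(R,S)$, the original pair $(P,Q)$ re-emerges as a linked pair in $\de_2(R,S)$. The plan is to argue directly from the explicit descriptions of the linear representatives afforded by Lemma \ref{le1}. First I would fix $i$ (the two cases $i=1,2$ are symmetric by Lemma \ref{le2.4.1}, so it suffices to treat $i=1$) and write down, using Lemma \ref{le1}, a linear representative of $\WW$ that displays both $P$, $Q$, and the structure of the cut: in case (i) we have $\WW = c(W_1 W_2)$ with $W_1 = p_2 B q_1$, $W_2 = q_2 A p_1$ (the notation already used in the proof of Lemma \ref{le2.4.1}), and in case (ii) we have $\WW = c(Y W_1 \ov Y W_2)$. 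So $\de_1(P,Q) = c(p_2 B q_1)$ or $c(p_2 B q_1 Y)$ accordingly. The occurrences of $P$ and $Q$ in $\WW$ therefore sit inside $\de_1(P,Q)$ partly and the ``missing'' portions are exactly the endpoints $p_1, q_2$ (and, in the iterated-subword cases, the blocks $Y$), which get glued back by the cyclic closure or survive inside $\de_1(P,Q)$.

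Next I would locate the subwords $R$ and $S$ inside this representative of $\de_1(P,Q)$ and perform the second cut. The point is that cutting $\de_1(P,Q)$ along $(R,S)$ according to Definition \ref{def1} produces $\de_{11}(R,S)$ and $\de_{21}(R,S)$, and one of them — I claim $\de_2(R,S)$, i.e.\ the second piece — is obtained from $\de_1(P,Q)$ by ``reassembling across the $(P,Q)$-cut'', so that the letters $p_1, q_2$ (together with the whole of $P$ and $Q$) appear consecutively in a single linear representative of $\de_2(R,S)$. Once $P$ and $Q$ are exhibited as honest linear subwords of $\de_2(R,S)$, verifying that the pair $(P,Q)$ is linked in $\de_2(R,S)$ is automatic: being $\OO$-linked is a condition (Definition \ref{d1}) on the words $P$, $Q$ and the various $o(c(\cdots))$ values, all of which are intrinsic to $P$ and $Q$ and do not depend on the ambient cyclic word in which they sit. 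So the only real content is the positional/combinatorial claim that $P$ and $Q$ remain subwords of $\de_2(R,S)$ and that it is $\de_2$ (rather than $\de_1$) that contains them.

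The bookkeeping splits according to the type of the pair $(P,Q)$ (three cases of Definition \ref{d1}) and, independently, the type of $(R,S)$ in $\de_i(P,Q)$, and within each one must track where the cut-points of $(R,S)$ fall relative to the blocks $p_2, B, q_1$ ($, Y$) of the representative. I would organize this as: (1) reduce to $i=1$; (2) in each of the three types for $(P,Q)$, write the canonical representative of $\de_1(P,Q)$ from Lemma \ref{le1}; (3) observe that $P$ occupies the segment ending at $p_2$-side and $Q$ the segment ending at $q_1$-side of this word, so that a linear representative of $\de_1(P,Q)$ ``starting just after the $Q$-part and running through to just before the $P$-part'' contains both; (4) check that the second piece $\de_2(R,S)$ of the $(R,S)$-cut is exactly such a representative, whatever the type of $(R,S)$ — this uses that $\de_2(P,Q)$ always ``wraps around'' in the complementary way to $\de_1$, as recorded in Lemmas \ref{le1} and \ref{le2.4.1}.

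\textbf{Main obstacle.} The hard part will be step (4): handling the interaction of the two cuts when the cut-points of $(R,S)$ in $\de_1(P,Q)$ straddle the place where $\WW$ was originally severed, i.e.\ when $R$ or $S$ uses letters from both the ``$P$-end'' ($p_2B$) and the ``$Q$-end'' ($q_1$ or $q_1Y$) of $\de_1(P,Q)$, and especially the type-(3) situation where the $Y$/$\ov Y$ blocks introduce an extra gluing. In those configurations one must carefully check that $P$ and $Q$ do not get broken by the $(R,S)$-cut, and identify by hand which of $\de_1(R,S)$, $\de_2(R,S)$ is the one containing them. I expect this to require drawing the cyclic-word pictures (as in Figures \ref{fig2}--\ref{fig4}) and exhausting a modest number of subcases; each subcase is routine once set up, but the setup and the choice of linear representatives must be done with care to keep the argument finite and transparent.
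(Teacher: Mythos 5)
Your plan coincides with the paper's own proof: there the argument is exactly your steps (2)--(4), writing explicit linear representatives such as $\de_1(P,Q)=c\bigl(p_2B_1r_1Yr_2Cs_1Ys_2B_2q_1X\bigr)$, computing $\de_{2}(R,S)=c\bigl(s_2B_2q_1Xq_2Ap_1Xp_2B_1r_1Y\bigr)$, and reading off that $P$ and $Q$ reassemble as consecutive subwords of $\de_2(R,S)$, with the case analysis organized by the types of $(P,Q)$ and $(R,S)$. Your reduction of $i=2$ to $i=1$ via Lemma \ref{le2.4.1} and Lemma \ref{le2}(b) is a small streamlining (the paper handles both cuts directly), but otherwise the approach is the same.
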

\begin{proof}
First, we note that if $(P,Q)$ is a linked pair of type $(1)$, then in both cases $(P,Q)\in\mathbb{LP}_1\bigl(\de_2(R,S)\bigr)$. Now, we study the other two cases.

We present the proof when  $(R,S)$ is a linked pair of type $(2)$, The other case is left to the reader.

 If $(P,Q)$ is a linked pair of type $(2)$, where $P=p_1Xp_2$ and $Q=q_1Xq_2$, then $\de_1(P,Q)=c(p_2Bq_1X)$ and $\de_2(P,Q)=c(q_2Ap_1X)$.
Let $R=r_1Yr_2$ and $S=s_1Ys_2$.
If $(R,S)\in\mathbb{LP}_1(\de_1(P,Q))$, with $\de_1(P,Q)=c\bigl(p_2B_1r_1Yr_2Cs_1Ys_2B_2q_1X\bigr)$, we have that $\de_{1}(R,S)=c\bigl(r_2Cs_1Y\bigr)$ and $\de_{2}(R,S)=c\bigl(s_2B_2q_1Xq_2Ap_1Xp_2B_1r_1Y\bigr)$ then $(P,Q)\in\mathbb{LP}_1(\de_2(R,S))$.
 On the other hand, if we have that $(R,S)\in\mathbb{LP}_1\bigl(\de_2(P,Q)\bigr)$, with $\de_2(P,Q)=c\bigl(q_2A_1r_1Yr_2A_2s_1Ys_2A_3p_1X\bigr)$, then $\de_{1}(R,S)=c\bigl(r_2A_2s_1Y\bigr)$ and $\de_{2}(R,S)=c\bigl(s_2A_3p_1Xp_2Bq_1Xq_2A_1r_1Y\bigr)$, therefore $(P,Q)\in\mathbb{LP}_1\bigl(\de_2(R,S)\bigr)$.

If $(P,Q)$ is a linked pair of type $(3)$, where $P=p_1Xp_2$ and $Q=q_1\ov{X}q_2$, then $\de_1(P,Q)=c\bigl(p_2Bq_1\bigr)$ and $\de_2(P,Q)=c\bigl(q_2Ap_1\bigr)$. Let $R=r_1Yr_2$ and $S=s_1Ys_2$.
If we have that $(R,S)\in\mathbb{LP}_1\bigl(\de_1(P,Q)\bigr)$ and $\de_1(P,Q)=c\bigl(p_2B_1r_1Yr_2Cs_1Ys_2B_2q_1\bigr)$, then $\de_{1}(R,S)=c\bigl(r_2Cs_1Y\bigr)$ and $\de_{2}(R,S)=c\bigl(s_2B_2q_1\ov{X}q_2Ap_1Xp_2B_1r_1Y\bigr)$, therefore $(P,Q)\in\mathbb{LP}_1\bigl(\de_2(R,S)\bigr)$. On the other hand, if $(R,S)\in\mathbb{LP}_1\bigl(\de_2(P,Q)\bigr)$ and $\de_2(P,Q)=c\bigl(q_2A_1r_1Yr_2A_2s_1Ys_2A_3p_1\bigr)$, therefore $\de_{1}(R,S)=c\bigl(r_2A_2s_1Y\bigr)$ and $\de_2(R,S)=c\bigl(s_2A_3p_1Xp_2Bq_1\ov{X}q_2A_1r_1Y\bigr)$, then $(P,Q)\in\mathbb{LP}_1\bigl(\de_2(R,S)\bigr)$.
\end{proof}

\begin{cor}\label{c1}
Under the same hypothesis as in the previous proposition the following holds
\begin{enumerate}
\item If $(P,Q)\in\mathbb{LP}_1\bigl(\de_2(R,S)\bigr)$ and $(R,S)\in\mathbb{LP}_1\bigl(\de_1(P,Q)\bigr)$, then $\left\{\begin{array}{ccc}
                                                                                     \de_1(R,S) & = & \de_{11}(R,S) \\
                                                                                     \de_2(P,Q) & = & \de_{22}(P,Q) \\
                                                                                     \de_{12}(P,Q) & = & \de_{21}(R,S)
                                                                                   \end{array}
                                                                                           \right.
$
\item If $(P,Q)\in\mathbb{LP}_1\bigl(\de_2(R,S)\bigr)$ and $(R,S)\in\mathbb{LP}_1\bigl(\de_2(P,Q)\bigr)$, then $\left\{\begin{array}{ccc}
                                                                                     \de_1(R,S) & = & \de_{12}(R,S) \\
                                                                                     \de_1(P,Q) & = & \de_{12}(P,Q) \\
                                                                                     \de_{22}(P,Q) & = & \de_{22}(R,S)
                                                                                   \end{array}\right.
$

\end{enumerate}
\end{cor}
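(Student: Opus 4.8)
The plan is to establish both cases by direct manipulation of explicit linear representatives, building on the normal forms already written down in the proof of Proposition \ref{p3}. A configuration is determined by the type ($(1)$, $(2)$ or $(3)$) of $(P,Q)$, the type of $(R,S)$, and which of $\de_1(P,Q)$, $\de_2(P,Q)$ contains $(R,S)$ --- the last choice being exactly what separates case $1$ (where $(R,S)\in\mathbb{LP}_1(\de_1(P,Q))$) from case $2$ (where $(R,S)\in\mathbb{LP}_1(\de_2(P,Q))$). For each configuration I would fix once and for all a linear representative of $\WW$ displaying the occurrences of $P$, $Q$, $R$, $S$ together with their common subwords; in the example treated in Proposition \ref{p3} --- both pairs of type $(2)$, $(R,S)$ inside $\de_1(P,Q)$ --- this is $\WW = c\bigl(p_2B_1r_1Yr_2Cs_1Ys_2B_2q_1Xq_2Ap_1X\bigr)$, with $X$ the common word of $(P,Q)$ and $Y$ that of $(R,S)$. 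From this single word, the cut rule of Definition \ref{def1} produces $\de_1(P,Q)$ and $\de_2(P,Q)$; applied to the (interior) occurrences of $R$ and $S$ it produces $\de_1(R,S)$ and $\de_2(R,S)$ --- legitimately, since under the hypotheses $(R,S)$ is itself a linked pair of $\WW$ and, by Proposition \ref{p3}, $(P,Q)\in\mathbb{LP}_1(\de_2(R,S))$. One further application of the cut rule, inside $\de_i(P,Q)$ and inside $\de_2(R,S)$, yields all the iterated words $\de_{ji}(R,S)$ and $\de_{jk}(P,Q)$.

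With everything written out explicitly, the three identities in each case are verified by inspection. In case $1$: $\de_1(R,S)$ and $\de_{11}(R,S)$ are the very same linear word, since the two cuts defining each fall between the same consecutive letters; likewise $\de_2(P,Q)=\de_{22}(P,Q)$ on the nose; and the remaining identity $\de_{12}(P,Q)=\de_{21}(R,S)$ holds after a single cyclic rotation of the linear representatives --- legitimate because we compare cyclic words, and the decomposition $\WW=c(W_1W_2)$ (respectively $\WW=c(YW_1\ov Y W_2)$ in type $(3)$) from Lemma \ref{le1}, which underlies these normal forms, pins down how the blocks line up. Case $2$ is identical in spirit: $\de_1(R,S)=\de_{12}(R,S)$ and $\de_1(P,Q)=\de_{12}(P,Q)$ as literal linear words, and $\de_{22}(P,Q)=\de_{22}(R,S)$ after one rotation. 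I would carry out the type $(2)$, type $(2)$ computation in full and then note that the type $(1)$ and type $(3)$ cases, and the sub-cases in which an occurrence of $R$ or $S$ abuts $p_2$, $q_1$ or the word $X$, follow by the same bookkeeping --- exactly as the proof of Proposition \ref{p3} leaves its remaining cases to the reader.

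The real difficulty is organizational: tracking the configurations and, within each, checking that the cuts defining the ``outer'' words $\de_j(\cdot)$ inside $\WW$ occupy the same relative positions as the cuts defining the ``inner'' words $\de_{ji}(\cdot)$ inside the smaller cyclic words. The one substantive point --- and the place where the full hypothesis of Proposition \ref{p3} is used, not just $(R,S)\in\mathbb{LP}_1(\de_i(P,Q))$ --- is that passing from $\WW$ to $\de_i(P,Q)$, or from $\de_2(R,S)$ back down, never separates two letters between which a later cut must be made; equivalently, the seam newly created when a cyclic word is cut open always avoids the relevant positions. Once that is in hand, no case needs an idea beyond those already present in the proof of Proposition \ref{p3}.
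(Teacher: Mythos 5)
Your proposal is correct and takes essentially the same route as the paper: the paper dispatches this corollary in a single line as an immediate consequence of Proposition \ref{p3}, whose proof already writes out the explicit linear representatives of $\de_i(P,Q)$, $\de_j(R,S)$ and the iterated words $\de_{ji}(\cdot)$ from which the three identities in each case are read off by inspection, up to a cyclic rotation. Your more careful bookkeeping (including the remark about the seams avoiding the later cut positions) only makes explicit what the paper leaves implicit.
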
 The proof of this corollary is an immediate consequence of the last proposition.

\begin{proof} (Theorem \ref{t1})
\begin{enumerate}
\item[(1)] Coskew symmetry: $s\circ \de =-\de$,
\begin{alignat*}{2}
s\circ \de(\m)
&=s\left(\sum_{(P,Q)\in\mathbb{LP}_1(\m)}sign(P,Q)\delta_1(P,Q)\otimes\delta_2(P,Q)\right)&\\
&=\sum_{(P,Q)\in\mathbb{LP}_1(\m)}sign(P,Q)\delta_2(P,Q)\otimes\delta_1(P,Q)&\\
&=\sum_{(Q,P)\in\mathbb{LP}_1(\m)}-sign(Q,P)\delta_2(P,Q)\otimes\delta_1(P,Q)&\\
&=-\sum_{(Q,P)\in\mathbb{LP}_1(\m)}sign(Q,P)\delta_1(Q,P)\otimes\delta_2(Q,P)&\\
&=-\de(\m).&
\end{alignat*}

\item[(2)] Co-Jacobi identity: $\bigl(id+\ve+\ve^2\bigr)\circ\bigl(id\otimes\de\bigr)\circ\de=0$, applying the Proposition \ref{p2}
\begin{alignat*}{2}
\de(\m) & =\sum_{\bigl\{(P,Q),(Q,P)\bigr\}\subset\mathbb{LP}_1(\m)}sign(P,Q)\bigl\{\de_1(P,Q)\otimes\de_2(P,Q)-\de_2(P,Q)\otimes\de_1(P,Q)\bigr\}.\\
\bigl(id\otimes\de\bigr)\bigl(\de(\m)\bigr)&=\sum_{\bigl\{(P,Q),(Q,P)\bigr\}\subset\mb{LP}_1(\m)}\sum_{\bigl\{(R,S),(S,R)\bigr\}\subset\mb{LP}_1(\de_2(P,Q))} sign(P,Q)sign(R,S)\de_1(P,Q)\otimes\\
&\de_{12}(R,S)\otimes\de_{22}(R,S)-sign(P,Q)sign(R,S)\de_1(P,Q)\otimes\de_{22}(R,S)\otimes\de_{12}(R,S)\\
&-\sum_{\bigl\{(P,Q),(Q,P)\bigr\}\subset\mb{LP}_1(\m)}\sum_{\bigl\{(R,S),(S,R)\bigr\}\subset\mb{LP}_1(\de_1(P,Q))}sign(P,Q)sign(R,S)\de_2(P,Q)\otimes\\
&\de_{11}(R,S)\otimes\de_{21}(R,S)-sign(P,Q)sign(R,S)\de_2(P,Q)\otimes\de_{21}(R,S)\otimes\de_{11}(R,S).
\end{alignat*}
For $(P,Q)\in\mathbb{LP}_1(\m)$ and $(R,S)\in\mb{LP}_1\bigl(\de_2(P,Q)\bigr)$,
by Proposition \ref{p3} we have that $(P,Q)\in
\mb{LP}_1\bigl(\de_2(R,S)\bigr)$. The term of $\bigl(id\otimes\de\bigr)\bigl(\de(\m)\bigr)$, which
is associated to the pair $\bigl\{(P,Q),(R,S)\bigr\}$, is
\begin{alignat*}{2}
\bigl(id\otimes\de_{(R,S)}\bigr)\bigl(\de_{(P,Q)}(\m)\bigr)&= sign(P,Q)sign(R,S)\de_1(P,Q)\otimes\de_{12}(R,S)\otimes\de_{22}(R,S)\\
&-sign(P,Q)sign(R,S)\de_1(P,Q)\otimes\de_{22}(R,S)\otimes\de_{12}(R,S).
\end{alignat*}
By the other hand, the term of $\bigl(id\otimes\de\bigr)\bigl(\de(\m)\bigr)$, which is
associated to the pair $\{(R,S),(P,Q)\}$, is
\begin{alignat*}{2}
\bigl(id\otimes\de_{(P,Q)}\bigr)\bigl(\de_{(R,S)}(\m)\bigr)&=sign(R,S)sign(P,Q)\de_1(R,S)\otimes\de_{12}(P,Q)\otimes\de_{22}(P,Q)\\
&-sign(R,S)sign(P,Q)\de_1(R,S)\otimes\de_{22}(P,Q)\otimes\de_{12}(P,Q).
\end{alignat*}
If we apply the Corollary \ref{c1}, then the last equality can be
replaced by
\begin{alignat*}{2}
\bigl(id\otimes\de_{(P,Q)}\bigr)\bigl(\de_{(R,S)}(\m)\bigr)&=sign(R,S)sign(P,Q)\de_{12}(R,S)\otimes\de_{1}(P,Q)\otimes\de_{22}(R,S)\\
&-sign(R,S)sign(P,Q)\de_{12}(R,S)\otimes\de_{22}(R,S)\otimes\de_{1}(P,Q).
\end{alignat*}
Finally, applying $id+\varepsilon+\varepsilon^2$ we have that
$$\bigl(id+\varepsilon+\varepsilon^2\bigr)\bigl(id\otimes\de_{(R,S)}\bigr)\bigl(\de_{(P,Q)}(\m)\bigr)+\bigl(id+\varepsilon+\varepsilon^2\bigr)\bigl(id\otimes\de_{(P,Q)}\bigr)\bigl(\de_{(R,S)}(\m)\bigr)=0.$$

If $(P,Q)\in\mathbb{LP}_1(\m)$ and $(R,S)\in\mb{LP}_1\bigl(\de_1(P,Q)\bigr)$, then
\begin{alignat*}{2}
\bigl(id\otimes\de_{(R,S)}\bigr)\bigl(\de_{(P,Q)}(\m)\bigr)=& -sign(P,Q)sign(R,S)\de_2(P,Q)\otimes\de_{11}(R,S)\otimes\de_{21}(R,S)\\
&+sign(P,Q)sign(R,S)\de_2(P,Q)\otimes\de_{21}(R,S)\otimes\de_{11}(R,S).
\end{alignat*}
By the other hand, the term of $\bigl(id\otimes\de\bigr)\bigl(\de(\m)\bigr)$, which is associated to the pair $\bigl\{(R,S),(P,Q)\bigr\}$, is
\begin{alignat*}{2}
\bigl(id\otimes\de_{(P,Q)}\bigr)\bigl(\de_{(R,S)}(\m)\bigr)&=sign(R,S)sign(P,Q)\de_1(R,S)\otimes\de_{12}(P,Q)\otimes\de_{22}(P,Q)\\
&-sign(R,S)sign(P,Q)\de_1(R,S)\otimes\de_{22}(P,Q)\otimes\de_{12}(P,Q).
\end{alignat*}
If we apply the Corollary \ref{c1}, then the last equality can be
replaced by
\begin{alignat*}{2}
\bigl(id\otimes\de_{(P,Q)}\bigr)\bigl(\de_{(R,S)}(\m)\bigr)&=sign(R,S)sign(P,Q)\de_{11}(R,S)\otimes\de_{21}(R,S)\otimes\de_{2}(P,Q)\\
&-sign(R,S)sign(P,Q)\de_{11}(R,S)\otimes\de_{2}(P,Q)\otimes\de_{21}(R,S).
\end{alignat*}
Finally, applying $id+\varepsilon+\varepsilon^2$ we have that
$$\bigl(id+\varepsilon+\varepsilon^2\bigr)\bigl(id\otimes\de_{(R,S)}\bigr)\bigl(\de_{(P,Q)}(\m)\bigr)+(id+\varepsilon+\varepsilon^2)\bigl(id\otimes\de_{(P,Q)}\bigr)\bigl(\de_{(R,S)}(\m)\bigr)=0.$$

\end{enumerate}
\end{proof}

\section{The Lie algebra structure}
\hspace{.5cm}Let $\VV$ and $\WW$ be two cyclic words. The set of linked pairs of $\VV$ and $\WW$, denoted by $\mathbb{LP}_2(\VV,\WW)$, is defined as the set of all pairs $(P,Q)$ for whose there exist positive integers $j$
and $k$ such that $P$ is an occurrence of a subword of $\VV^j$ and $Q$ is an occurrence of a subword of $\WW^k$, where $l\bigl(\VV^{j-1}\bigr)<l(P)\leq l\bigl(\VV^j\bigr)$ and $l\bigl(\WW^{k-1}\bigr)<l(Q)\leq l\bigl(\WW^k\bigr)$. \\ 

The reader can see the proof of the next results in \cite{moira}.
\begin{prop}\label{p1}
Let $\VV$ and $\WW$ be reduced cyclic words. Then we have the following consequences:
\begin{enumerate}
\item[(a)] there are at most $l(\VV)l(\WW)$ elements in $\mathbb{LP}_2(\VV,\WW)$,
\item[(b)] the set of linked pairs of one word, $\mathbb{LP}_1(\WW)$, contains at most $l(\WW)\bigl(l(\WW)-1\bigr)$ elements.
\end{enumerate}
\end{prop}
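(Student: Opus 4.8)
The plan is to bound the number of linked pairs by first understanding, for a fixed pair of "anchor positions", how many linked pairs can be built on top of them. Consider part (a) first. A linked pair $(P,Q)\in\mathbb{LP}_2(\VV,\WW)$ is one of three types, and in each type the words $P$ and $Q$ share a common "core" (the empty core in type (1), the word $Y$ in type (2), and the word $Y$ resp. $\ov{Y}$ in type (3)) flanked by one letter on each side. The first observation I would record is that a linked pair is completely determined by the \emph{position in $\VV$ where $P$ begins} together with \emph{the position in $\WW$ where $Q$ begins}, once one also knows the type; indeed, the maximality condition built into the definition of $\mathbb{LP}_2$ — namely $l(\VV^{j-1})<l(P)\le l(\VV^j)$ and similarly for $Q$ — forces $P$ and $Q$ to be as long as possible, so that $P$ extends until the two threads first disagree (either $p_2\ne q_2$ in type (2), or the analogous condition in type (3)), and symmetrically on the left. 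Thus $P$ and $Q$ are not free parameters: the \emph{starting letter} of $P$ in $\VV$ (one of $l(\VV)$ choices) and the \emph{starting letter} of $Q$ in $\WW$ (one of $l(\WW)$ choices) determine the whole pair. This yields the bound $l(\VV)l(\WW)$.

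More carefully, I would argue as follows. Fix an occurrence of a letter $v$ of $\VV$ (thought of as a vertex on the cyclic word, equivalently a starting position in $\VV^j$ for the appropriate $j$) and an occurrence of a letter $w$ of $\WW$. I claim there is at most one linked pair $(P,Q)$ with $P$ beginning at $v$ and $Q$ beginning at $w$. Suppose $(P,Q)$ and $(P',Q')$ were two such. Reading $\VV$ forward from $v$ and $\WW$ forward from $w$, the common core (followed by $p_2$ in $P$, or by the final letter of $Y$ then $p_2$, etc.) is determined by where the two infinite forward rays $\VV^\infty$ and $\WW^\infty$ (in type (2)), or $\VV^\infty$ and $\ov{\WW}^\infty$ read appropriately (in type (3)), first diverge; and similarly the left ends are determined by where the backward rays first diverge. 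Since $\VV$ and $\WW$ are reduced, these rays are well-defined eventually-periodic sequences, and the first point of divergence is unambiguous. Hence $P=P'$, $Q=Q'$, and the type is forced as well (the three types are mutually exclusive by the disequalities $p_1\ne q_1$, $p_1\ne\ov q_2$, etc.). Therefore the map $(P,Q)\mapsto(\text{start of }P,\text{start of }Q)$ is injective into a set of size $l(\VV)l(\WW)$, proving (a).

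For part (b), the set $\mathbb{LP}_1(\WW)$ consists of ordered pairs $(P,Q)$ of subwords of $\WW$; informally this is the "diagonal" case of (a) with $\VV=\WW=\WW$, except that a genuine self-intersection requires $P$ and $Q$ to be distinct occurrences. The same injectivity argument shows $(P,Q)$ is determined by the starting position of $P$ and the starting position of $Q$, each among the $l(\WW)$ vertices of $\WW$; but now the defining conditions ($p_1\ne q_1$ and $p_2\ne q_2$ in type (2), etc.) rule out the case where the two starting positions coincide — if $P$ and $Q$ started at the same vertex and had the same type, reading forward would force $P=Q$, contradicting $p_1\ne q_1$ or the analogous condition — so at most one of the two positions is free relative to the other, leaving at most $l(\WW)(l(\WW)-1)$ ordered pairs. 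The main obstacle in writing this out rigorously is bookkeeping the three types uniformly and making the "first point of divergence" argument airtight in the cyclic/periodic setting (in particular checking that when $\WW$ is not primitive the eventually-periodic rays still diverge at a well-defined first position, using reducedness). Since the proposition is attributed to \cite{moira}, I would keep this sketch brief and refer there for the full verification.
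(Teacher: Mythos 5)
The paper itself does not prove this proposition --- it explicitly defers to \cite{moira} --- so I am judging your argument on its own terms. Your overall strategy (inject $\mathbb{LP}_2(\VV,\WW)$ into the set of pairs consisting of one position of $\VV$ and one position of $\WW$) is the standard and correct route to the bound. The injectivity is also genuinely fine for pairs of type $(2)$: there the core $Y$ is a common prefix of the two forward rays, and the requirement $p_2\neq q_2$ (not, as you first assert, the length condition $l(\VV^{j-1})<l(P)\leq l(\VV^j)$, which only pins down $j$ given $P$) forces $Y$ to be the maximal such prefix, so the pair is determined by where it starts.

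The gap is in type $(3)$ and in separating the types. For a type-$(3)$ pair $(p_1Yp_2,\,q_1\ov{Y}q_2)$ anchored at fixed starting positions, $Y$ is read forward in $\VV$ while $\ov{Y}$ is read forward in $\WW$; writing $Y=y_1\cdots y_t$, the matching condition is $\WW[m+s]=\ov{y_{t+1-s}}$, which depends on $t$ in a non-nested way. There is no ``first point of divergence'': candidate cores of different lengths $t\neq t'$ impose incomparable systems of equations (compatible exactly when $\VV$ is suitably periodic on the relevant range, e.g.\ $Y=ab$, $Y'=abab$), so two distinct type-$(3)$ pairs could a priori share their starting positions. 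Likewise, your claim that the three types are mutually exclusive at a given anchor ``by the disequalities'' does not follow: $p_1\neq q_1$ and $p_1\neq\ov{q_2}$ are conditions internal to types $(2)$ and $(3)$ respectively and say nothing about a type-$(1)$ and a type-$(3)$ pair coexisting at the same positions (type $(1)$ vs.\ type $(2)$ is the only easy exclusion, since a type-$(2)$ pair forces $p_2=q_2$ and a repeated letter kills $o(c(\ov{p}_1\ov{q}_1p_2q_2))$). Ruling out the remaining coincidences requires invoking the orientation conditions $o(\cdot)$ in Definition~\ref{d1}, which your argument never uses; when I try to build a type-$(1)$/type-$(3)$ coincidence it is precisely the $o$-conditions that obstruct it. So the injection you propose is plausible but not established, and the missing ingredient is exactly the part of Chas's argument that uses $o$ (equivalently, the geometric fact that two arcs in the $4n$-gon cross at most once). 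The diagonal exclusion you give for part (b) is correct as far as it goes, but inherits the same incompleteness.
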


\begin{lem}
If $P=x_0x_1\dots x_{m-1}$ is a linear word and for some $i\in\bigl\{1,2,\dots, m-1\bigr\}$, $P=x_ix_{i+1}\dots x_{m-1}x_0x_1\dots x_{i-1}$ then there exists a linear word $Q$ and an integer $r$ such that $r\geq 2$, $P=Q^r$ and $l(Q)$
divides $i$.
\end{lem}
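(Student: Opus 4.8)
The statement asserts that a linear word $P = x_0 x_1 \dots x_{m-1}$ which is fixed by the cyclic rotation by $i$ positions must be a proper power $Q^r$ with $l(Q) \mid i$. The plan is to use a standard $\gcd$ argument on the cyclic structure of the indices. First I would observe that the rotation $\sigma_i$ sending position $k$ to position $k+i \pmod m$ generates a cyclic subgroup of $\ZZ/m\ZZ$; the hypothesis $P = \sigma_i(P)$ means that $x_k = x_{k+i}$ for all $k$ (indices mod $m$). Iterating, $x_k = x_{k + ti}$ for every integer $t$, so $x_k = x_{k'}$ whenever $k \equiv k' \pmod{d}$, where $d = \gcd(i,m)$, since the subgroup of $\ZZ/m\ZZ$ generated by $i$ is exactly $d\,\ZZ/m\ZZ$.

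Next I would set $Q = x_0 x_1 \dots x_{d-1}$, the initial block of length $d$, and $r = m/d \geq 1$. The congruence $x_k = x_{k \bmod d}$ for all $k \in \{0,\dots,m-1\}$ shows precisely that $P = Q^r$. To conclude I must rule out $r = 1$: if $d = m$ then the hypothesis would force, taking $i$ itself, nothing new, so I need the hypothesis $i \in \{1,\dots,m-1\}$ to guarantee $d = \gcd(i,m) < m$, hence $r = m/d \geq 2$. Indeed since $1 \leq i \leq m-1$ we have $d \leq i < m$, so $r \geq 2$ as required. Finally $l(Q) = d \mid i$ by definition of $d = \gcd(i,m)$, which gives the divisibility claim.

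The only mildly delicate point — and the place I would be most careful — is the identification of the subgroup $\langle i \rangle \leq \ZZ/m\ZZ$ with $d\,\ZZ/m\ZZ$ and the resulting equivalence "$x_k$ depends only on $k \bmod d$"; this is elementary number theory (Bézout: $d = \gcd(i,m)$ is an integer combination $d = ai + bm$, so $x_k = x_{k+ai} = x_{k+d}$ modulo $m$), but it is the heart of the matter. Everything else is bookkeeping: translating the index congruences into the statement $P = Q^r$ and checking $r \geq 2$. I would write the argument in terms of indices taken modulo $m$ throughout to avoid confusion between the word $P$ and its cyclic shifts.
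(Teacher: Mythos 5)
Your proof is correct: the key step, that $x_k = x_{k+i \bmod m}$ for all $k$ propagates to $x_k = x_{k+d \bmod m}$ with $d=\gcd(i,m)$ via B\'ezout, and hence $P=Q^{m/d}$ with $Q$ the initial block of length $d$, $m/d\geq 2$ because $d\leq i<m$, and $d\mid i$, is exactly the standard argument. The paper itself does not prove this lemma (it defers to the cited work of Chas), so there is no in-paper proof to compare against; your argument is complete and is the expected one.
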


\begin{lem}\label{l3}
Let $\VV$ and $\WW$ be cyclic words which are not powers of the same cyclic word and let $P$ be a linear word. Let $k$ and $l$ be positive integers such that $P$ is a subword of $\VV^k$ and either $P$ or $\overline{P}$ is a subword of $\WW^l$. Moreover, assume that $(k-1)l(\VV)<l(P)$ and $(l-1)l(\WW)<l(P)$. Then $l(P)<l(\VV) + l(\WW)$.
\end{lem}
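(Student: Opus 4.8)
The plan is to argue by contradiction: suppose $l(P) \geq l(\mathbb{V}) + l(\mathbb{W})$, and derive that $\mathbb{V}$ and $\mathbb{W}$ are powers of a common cyclic word, contradicting the hypothesis. The idea is that if $P$ is long enough relative to both $l(\mathbb{V})$ and $l(\mathbb{W})$, then $P$ contains a full period's worth of the pattern coming from $\mathbb{V}$ and simultaneously a full period's worth of the pattern coming from $\mathbb{W}$ (or from $\overline{\mathbb{W}}$), which forces the two periodic patterns to coincide. Concretely, since $P$ is a subword of $\mathbb{V}^k$ and $l(P) \geq l(\mathbb{V})$, the letters of $P$ are governed by a periodic bi-infinite word of period $l(\mathbb{V})$; similarly $P$ (or $\overline{P}$) sits inside a periodic pattern of period $l(\mathbb{W})$. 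Having $l(P) \geq l(\mathbb{V}) + l(\mathbb{W}) > \max\{l(\mathbb{V}), l(\mathbb{W})\}$ lets us invoke the Fine–Wilf flavour of reasoning: a word that is simultaneously $l(\mathbb{V})$-periodic and $l(\mathbb{W})$-periodic and has length at least $l(\mathbb{V}) + l(\mathbb{W})$ (actually $l(\mathbb{V})+l(\mathbb{W})-\gcd$ suffices) is also $\gcd(l(\mathbb{V}),l(\mathbb{W}))$-periodic.

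The key steps, in order, would be: (1) Fix linear representatives $V$ of $\mathbb{V}$ and $W$ of $\mathbb{W}$ and extend them to bi-infinite periodic words $V^\infty$ and $W^\infty$; realize $P$ as a factor occurring at some position in $V^\infty$ and $P$ or $\overline P$ as a factor of $W^\infty$. (2) Using the length bounds $(k-1)l(\mathbb{V}) < l(P)$ and $(l-1)l(\mathbb{W}) < l(P)$, check that the occurrence of $P$ genuinely ``wraps around'' enough of each cyclic word so that the periodicity is visible within $P$ itself — i.e. $P$ as a standalone finite word is both $l(\mathbb{V})$-periodic and $l(\mathbb{W})$-periodic in the usual sense. (3) Apply the Fine–Wilf theorem (or the elementary lemma stated just above, Lemma on $P = Q^r$, which is the combinatorial seed of the same fact) to conclude that $P$ has period $d = \gcd(l(\mathbb{V}), l(\mathbb{W}))$. (4) Translate $d$-periodicity of $P$ back to the cyclic words: the prefix of $P$ of length $d$ generates, by repetition, both $\mathbb{V}$ (up to cyclic rotation) and $\mathbb{W}$ (up to cyclic rotation, and possibly after passing to $\overline{\mathbb{W}}$), so $\mathbb{V}$ and $\mathbb{W}$ are both powers of the cyclic word $c$ of that length-$d$ block — contradiction. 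In the case where it is $\overline P$ rather than $P$ that is a subword of $\mathbb{W}^l$, the same argument applied to $\overline P$ shows $\mathbb{V}$ and $\overline{\mathbb{W}}$ are powers of a common cyclic word, hence so are $\mathbb{V}$ and $\mathbb{W}$.

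The main obstacle I expect is step (2): making precise that the finite word $P$, as opposed to the bi-infinite periodic word it lives inside, actually exhibits the periodicity. This is exactly where the hypotheses $(k-1)l(\mathbb{V}) < l(P)$ and $(l-1)l(\mathbb{W}) < l(P)$ are used — they guarantee that $P$ is long enough to span at least one full period boundary in each direction, so that the ``wrap-around'' identification of letters in $\mathbb{V}^k$ (resp.\ $\mathbb{W}^l$) translates into an honest statement $x_{i} = x_{i + l(\mathbb{V})}$ for all valid $i$ within $P$. One has to be a little careful with the indexing and with the boundary case where $l(P)$ is a multiple of $l(\mathbb{V})$ or $l(\mathbb{W})$, but no genuinely new idea is needed beyond careful bookkeeping. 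Everything else is either the standard Fine–Wilf argument or a direct translation between the ``linear subword of a power'' language and the ``periodic word'' language, and the preceding lemma about $P = Q^r$ already packages the part of Fine–Wilf that we need in the equal-period and divisibility situations.
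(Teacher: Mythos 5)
The paper itself gives no proof of this lemma; it is quoted from \cite{moira}, and the argument there is exactly the periodicity argument you describe, with the preceding rotation lemma (the one producing $P=Q^r$) serving as the Fine--Wilf step. Your outline is correct, with one misplaced emphasis: a factor of $\VV^k$ is \emph{automatically} $l(\VV)$-periodic as a finite word, so your step (2) needs no input from the hypotheses $(k-1)l(\VV)<l(P)$ and $(l-1)l(\WW)<l(P)$ --- those inequalities merely normalize $k$ and $l$ and play no role in the contradiction; what you actually need is only $l(P)\geq l(\VV)+l(\WW)$, which guarantees that $P$ carries both periods on a window long enough to force the common period $d=\gcd\bigl(l(\VV),l(\WW)\bigr)$ and to contain full windows of lengths $l(\VV)$ and $l(\WW)$, whence both cyclic words are powers of one cyclic word of length $d$. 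The only point to make explicit is the $\overline{P}$ case: there you conclude that $\WW$ is a positive power of the reversed-and-barred length-$d$ word, i.e.\ a negative power of the word of which $\VV$ is a positive power, so you must use the convention (implicit in the paper's definition $\WW^{-n}=\overline{\WW}^{\,n}$) that ``powers of the same cyclic word'' admits negative exponents --- otherwise $\VV=c(a_1)$, $\WW=c(\overline{a_1})$, $P=a_1a_1a_1$ would contradict the lemma as stated.
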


\begin{rem}\label{r2}
Let be $\ma{V}$ and $\ma{W}$ reduced cyclic words and
$(P,Q)\in\mb{LP}_2(\ma{V},\ma{W})$. Then we have the following consequences:
\begin{enumerate}
\item[i)] If $l(\ma{V})=l(\ma{W})$, then $P\subset \ma{V}$ and $Q\subset\ma{W}$.
\item[ii)] If $l(\ma{V})< l(\ma{W})$, then $W_1$ is not a power of $V_1$.
\end{enumerate}
\end{rem}
\begin{proof}
\begin{enumerate}
\item[i)] Note that, in these conditions, if $P\subset\ma{V}^k$, then  $Q\subset\ma{W}^k$.\\
Suppose that $k\neq 1$ as a consequence $P=p_1Xp_2$ with $X=BV_1^{k-1}$ and
$Q=q_1Xq_2$, where $X=CW_1^{k-1}$. As $l(V_1)=l(W_1)$ and $X=BV_1^{k-1}=CW_1^{k-1}$ is $V_1=W_1$,
in particular we have  that $p_2=q_2$ and this is a contradiction. Consequently, $k=1$.
\item[ii)] Let $\ma{V}$ and $\ma{W}$ be cyclic words such that $l(\ma{V})<l(\ma{W})$. If $W_1=V_1^k$, then $W_1=q_2A=V_1^k$ where  $V_1=p_2B$.
Finally, since $W_1=p_2BV_1^{k-1}$ is $q_2=p_2$ which contradicts
that $(P,Q)$ is a linked pair.
\end{enumerate}
\end{proof}

\begin{prop}\label{p2}
Let $\ma{V}$ and $\ma{W}$ be reduced cyclic words. Then
$\mb{LP}_2(\ma{V},\ma{W})$ can be defined as the set of all linked pairs $(P,Q)$
such that $P$ is an occurrence of a subword of $\ma{V}^j$, $Q$ is an
occurrence of a subword of $\ma{W}^k$,  $l\bigl(\ma{V}^{j-1}\bigr)< l(P)\leq
l\bigl(\ma{V}^j\bigr)$, $l\bigl(\ma{W}^{k-1}\bigr)<l(Q)\leq l\bigl(\ma{W}^k\bigr)$ where $j$ and
$k$ are positive integers such that $\displaystyle{j\leq 2+\frac{l(\ma{W})}{l(\ma{V})}}$ and $\displaystyle{k\leq 2+\frac{l(\ma{V})}{l(\ma{W})}}$.
\end{prop}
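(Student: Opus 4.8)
The plan is to bound the exponents $j$ and $k$ by combining Lemma~\ref{l3} with the defining inequalities $l(\ma{V}^{j-1})<l(P)$ and $l(\ma{W}^{k-1})<l(Q)$. First I would treat the case in which $\ma{V}$ and $\ma{W}$ are not powers of a common cyclic word, so that Lemma~\ref{l3} applies. Given $(P,Q)\in\mb{LP}_2(\ma{V},\ma{W})$, the linked-pair condition forces $P$ and $Q$ to share a common subword $Y$ of length at least one (in type (1) this is the empty word, and that case is handled separately since then $l(P)=l(Q)=2$ and the bounds are trivial once $l(\ma{V}),l(\ma{W})\geq 1$); in types (2) and (3) either $P$ or $\ov{P}$ overlaps $Q$ along $Y$, and one extracts from the two occurrences a single linear word $Z$ of length $\min(l(P),l(Q))$ which sits inside $\ma{V}^j$ and inside $\ma{W}^k$ (up to barring). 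Applying Lemma~\ref{l3} to $Z$ — after checking its hypotheses $(j-1)l(\ma{V})<l(Z)$ and $(k-1)l(\ma{W})<l(Z)$, which follow from the definition of $\mb{LP}_2$ — gives $l(Z)<l(\ma{V})+l(\ma{W})$.

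Now I would run the arithmetic. From $l(\ma{V}^{j-1})<l(P)$ we get $(j-1)\,l(\ma{V})<l(P)$, and since $l(P)\leq \max(l(P),l(Q))$ we still need to relate $l(P)$ to $l(Z)=\min(l(P),l(Q))$. Here I would split on which of $l(P),l(Q)$ is smaller: if $l(P)\leq l(Q)$ then $l(Z)=l(P)$ and directly $(j-1)l(\ma{V})<l(P)=l(Z)<l(\ma{V})+l(\ma{W})$, whence $j-1<1+l(\ma{W})/l(\ma{V})$, i.e.\ $j\leq 2+l(\ma{W})/l(\ma{V})$ as an integer inequality; if instead $l(Q)<l(P)$ then $l(Z)=l(Q)$, and one instead bounds $j$ using $l(Q)\leq l(\ma{W}^k)$ together with the analogous estimate on $k$ — more cleanly, in this subcase $l(P)\leq l(Q)+\bigl(l(\ma{V})+l(\ma{W})\bigr)$ is not immediate, so the symmetric choice is to observe that $\min(l(P),l(Q))<l(\ma{V})+l(\ma{W})$ forces at least one of the two exponent bounds directly, and then re-derive the other from the word $Y$ of the same linked pair viewed with the roles of the shorter subword. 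The symmetric argument yields $k\leq 2+l(\ma{V})/l(\ma{W})$.

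Finally I would dispose of the remaining case, where $\ma{V}=\ZZ^a$ and $\ma{W}=\ZZ^b$ for a common primitive cyclic word $\ZZ$; then by Remark~\ref{r2}(i) applied after passing to $\ZZ$ (or directly, since a linked pair cannot have $p_2=q_2$) the occurrences can be taken inside $\ma{V}$ and $\ma{W}$ themselves, so $j=k=1$ and the bounds hold trivially. Assembling the pieces shows that every element of $\mb{LP}_2(\ma{V},\ma{W})$ already arises with exponents in the stated ranges, and conversely every pair with exponents in those ranges is by definition in $\mb{LP}_2(\ma{V},\ma{W})$, giving the claimed reformulation.

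\textbf{Main obstacle.} The delicate point is the bookkeeping when $l(P)\neq l(Q)$: Lemma~\ref{l3} only controls $\min(l(P),l(Q))$, so recovering a bound on the exponent attached to the \emph{longer} of the two words requires re-reading the linked-pair structure to locate a sufficiently long common subword seen inside the appropriate power, rather than a one-line substitution. Handling the three types of linked pairs uniformly here — in particular getting the barred word $\ov{P}$ versus $P$ alternatives in type (3) to feed correctly into Lemma~\ref{l3} — is where the proof needs care.
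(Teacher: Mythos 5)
Your overall strategy --- feed the common overlap word into Lemma~\ref{l3} and combine the resulting length bound with the defining inequalities $l(\ma{V}^{j-1})<l(P)$ and $l(\ma{W}^{k-1})<l(Q)$ --- is exactly the paper's. But the ``main obstacle'' you flag, namely the bookkeeping when $l(P)\neq l(Q)$, does not exist: by Definition~\ref{d1} a linked pair of type $(2)$ or $(3)$ has the form $P=p_1Yp_2$ and $Q=q_1Yq_2$ (or $q_1\ov{Y}q_2$) with the \emph{same} word $Y$, so $l(P)=l(Q)=l(Y)+2$ always, and in type $(1)$ both have length $2$. Consequently the branch of your case analysis that you leave unresolved (``\dots is not immediate, so the symmetric choice is\dots'') is vacuous, and your first branch is already the whole proof. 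Note also that the word to hand to Lemma~\ref{l3} is $Y$ itself, not ``a word of length $\min(l(P),l(Q))$'': $P$ itself need not occur in any power of $\ma{W}$, whereas $Y$ (or $\ov{Y}$) genuinely occurs in both $\ma{V}^j$ and $\ma{W}^k$. One then gets $l(Y)<l(\ma{V})+l(\ma{W})$, hence $(j-1)l(\ma{V})<l(P)=l(Y)+2\leq l(\ma{V})+l(\ma{W})+1$, so $(j-1)l(\ma{V})\leq l(\ma{V})+l(\ma{W})$ and $j\leq 2+l(\ma{W})/l(\ma{V})$, and symmetrically for $k$. This is precisely the paper's computation, after it disposes of the case $l(\ma{V})=l(\ma{W})$ via Remark~\ref{r2}(i), which forces $j=k=1$ there.

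Two further points. First, your worry that Lemma~\ref{l3} requires $\ma{V}$ and $\ma{W}$ not to be powers of a common cyclic word is a legitimate one (the paper is silent about it), but your proposed fix is not adequate as stated: when $l(\ma{V})<l(\ma{W})$ you cannot invoke Remark~\ref{r2}(i), which only concerns words of equal length, and concluding $j=k=1$ there is unjustified; that degenerate case needs its own short argument. Second, the converse inclusion (every pair with exponents in the stated ranges lies in $\mb{LP}_2(\ma{V},\ma{W})$) is immediate from the definition, as you say, so the real content is only the bound on $j$ and $k$; once you use $l(P)=l(Q)$ your argument for that bound closes up and coincides with the paper's.
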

\begin{proof}
 For $(P,Q)\in \mb{LP}_2(\ma{V},\ma{W})$ we have that $P$ is an occurrence of a subword of $\ma{V}^j$ and $Q$ is an occurrence of a subword of $\ma{W}^k$, for some positive integers $j$ and $k$. We can choose $j$ and $k$ to be the minimal positive integers such that $l\bigl(\ma{V}^{j-1}\bigr)<l(P)\leq l\bigl(\ma{V}^j\bigr)$ and $l\bigl(\ma{W}^{k-1}\bigr)<l(Q)\leq l\bigl(\ma{W}^k\bigr)$.\\
Suppose that $(P,Q)$ is a linked pair of type $(2)$  (the case (3) is analogous). \\
$(1)$ For $l(\ma{V})=l(\ma{W})$, by remark \ref{r2}, we have that $P\subset\ma{V}$ and $Q\subset\ma{W}$. For this reason, we have that $j=k=1$.\\
$(2)$ Suppose that $l(\ma{V})<l(\ma{W})$. By the lemma \ref{l3},  $l(Q)=2+l(X)<2+l(\ma{V})+l(\ma{W})$. Since $l(\ma{V})<l(\ma{W})$ we have that $l(Q)\leq 2 l(\ma{W})$. Consequently $\displaystyle{k\leq 2+\frac{l(\ma{V})}{l(\ma{W})}}$.\\
Similarly for $Q$, we have that $l(P)=2+l(X)\leq l(\ma{V})+ l(\ma{W})+1$. Moreover, $l\bigl(\ma{V}^{j-1}\bigr)<l(P)$, and  we conclude that $(j-1)l(\ma{V})<l(P)\leq l(\ma{V})+l(\ma{W})+1$. That  is
$(j-1)l(\ma{V}) \leq l(\ma{V})+l(\ma{W})$. As a consequence $\displaystyle{j\leq 2+\frac{l(\ma{W})}{l(\ma{V})}}$.
\end{proof}

In the next example we see that the given upper bound is optimal.
\begin{example}
Let be $\mathcal{O}=c\bigl(a_1a_2\ov{a_1}\,\ov{a_2}\bigr)$, $\mathcal{V}=c\bigl(a_1a_1a_2\bigr)$ and $\WW=c\bigl(a_1a_1a_2a_1a_1a_2a_1\bigr)$. We construct the linked pair $(P,Q)\in\mathbb{LP}_2(\mathcal{V},\WW)$
as $$\begin{array}{ccc}
  P & = & a_2a_1a_1a_2a_1a_1a_2a_1a_1a_2, \\
  Q & = & a_1a_1a_1a_2a_1a_1a_2a_1a_1a_1.
\end{array}
$$
In this case we have that $V_1=a_2a_1a_1$ and $W_1=a_2a_1a_1a_2a_1a_1a_1$, we have the following
$$
\begin{array}{rcl}
  c\bigl(V_1^4\bigr) & = & c\bigl(a_1a_1a_2a_1a_1a_2a_1a_1a_2a_1a_1a_2\bigr)=c\bigl(a_1a_1P\bigr), \\
  c\bigl(W_1^2\bigr) & = & c\bigl(a_2a_1a_1a_2a_1a_1a_1a_2a_1a_1a_2a_1a_1a_1\bigr)=c\bigl(a_2a_1a_1a_2Q\bigr).
\end{array}
$$
\end{example}

\begin{defn}\label{d2}
We associate to each linked pair $(P,Q)\in\mathbb{LP}_2(\WW,\ma{Z})$ a cyclic word $\gamma(P,Q)=c\bigl(W_1Z_1\bigr)$, where $W_1$ and $Z_1$ are linear words defined as follows:
\begin{enumerate}
\item[(i)] For $(P,Q)$ a linked pair of type $(1)$ or $(2)$, $W_1$ is the representative of $\WW$ obtained by cutting $\WW$ immediately before $p_2$ and $Z_1$ is the representative of $\ma{Z}$ obtained by cutting $\ma{Z}$ immediately before $q_2$.
\item[(ii)] For $(P,Q)$ a linked pair of type $(3)$, $W_1$ is the linear subword of $\WW$ that starts right after the end of $Y$ and ends right before the first letter of $Y$, and $Z_1$ is the
subword of $\ma{Z}$ that starts right after the last letter of $\overline{Y}$ and ends right before the beginning of $\overline{Y}$. (Observe that $Y$ may not be a subword of $\WW$ or of $\ma{Z}$, but we can always find the first and last letters of $Y$ in $\WW$ and $\ma{Z}$.)
\end{enumerate}
\end{defn}

\begin{prop}
For each pair of reduced cyclic words $\WW$ and $\ma{Z}$, and for each linked pair $(P,Q)\in\mathbb{LP}_2(\WW,\ma{Z})$, the cyclic word $\gamma(P,Q)$ is a cyclic reduced word. In particular, $\gamma(P,Q)$ is non-empty.
\end{prop}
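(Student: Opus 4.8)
The plan is to show that $\gamma(P,Q)=c(W_1Z_1)$ is freely reduced by examining every junction at which two letters become adjacent in the concatenation $W_1Z_1$ that were not adjacent in $\WW$ or in $\ma{Z}$. Since $\WW$ and $\ma{Z}$ are themselves reduced, the only places where a cancellation $x_i=\ov{x_{i+1}}$ could occur are the two ``seams'' of the cyclic word $c(W_1Z_1)$: the transition from the last letter of $W_1$ to the first letter of $Z_1$, and the transition from the last letter of $Z_1$ back to the first letter of $W_1$. So the proof reduces to analysing these two seams in each of the three types of linked pair, using the defining inequalities $p_1\neq q_1$, $p_2\neq q_2$ (type $2$) and $p_1\neq\ov{q_2}$, $p_2\neq\ov{q_1}$ (type $3$), together with the fact that $W_1$ begins at $p_2$ and (for types $1$ and $2$) $Z_1$ begins at $q_2$.

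First I would set up notation following Definition~\ref{d2}: in the type $(1)$ or $(2)$ case, write $W_1=p_2\cdots$ and $Z_1=q_2\cdots$, so that the first seam places the last letter of $W_1$ — which, reading $\WW$ cyclically, is the letter immediately preceding $p_2$, namely $p_1$ — adjacent to $q_2$; and the second seam places the last letter of $Z_1$, namely $q_1$, adjacent to $p_2$. For type $(2)$ one must be slightly careful because $W_1$ and $Z_1$ are subwords of powers of $\WW$ and $\ma{Z}$ rather than of the words themselves, but the relevant boundary letters are still $p_1,p_2,q_1,q_2$ by the structure of Lemma~\ref{le1} (case (i)) applied in $\mathbb{LP}_1$, or by the analogous concatenation statement for $\mathbb{LP}_2$. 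Then freely-reducedness at the seams is exactly the assertion $p_1\neq\ov{q_2}$ and $q_1\neq\ov{p_2}$. Now I invoke the hypothesis that $(P,Q)$ is a linked pair: the condition $o(c(\ov{p_1}\,\ov{q_1}p_2q_2))\neq0$ (type $1$), resp.\ $o(c(\ov{p_1}\,\ov{q_1}x_1))=o(c(p_2q_2\ov{x_2}))$ with both nonzero (type $2$), forces the four-letter, resp.\ three-letter, cyclic words appearing there to be \emph{reduced}; unpacking what ``reduced'' means for $c(\ov{p_1}\,\ov{q_1}p_2q_2)$ yields precisely $\ov{p_1}\neq q_1$ (i.e.\ $p_1\neq\ov{q_1}$), $q_1\neq\ov{p_2}$, $p_2\neq\ov{q_2}$, and $q_2\neq p_1$ — the last two of which are the seam conditions we need, and the cyclic closure gives $q_2\neq p_1$ too. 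For type $(2)$, the reducedness of $c(p_2q_2\ov{x_2})$ gives $p_2\neq\ov{q_2}$ and the reducedness of $c(\ov{p_1}\,\ov{q_1}x_1)$ combined with the defining inequalities handles the other seam once one tracks which letter actually sits at the end of $Z_1$.

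For type $(3)$, the argument is structurally the same but the seams are different: by Lemma~\ref{le1}(ii), $\WW=c(YW_1\ov{Y}W_2)$, so $W_1$ runs from $p_2$ to $q_1$ and $Z_1$ (the analogue of $W_2$, but here in the two-word setting $\mathbb{LP}_2(\WW,\ma{Z})$) runs from $q_2$ to $p_1$; hence in $c(W_1Z_1)$ the two seams place $q_1$ next to $q_2$ and $p_1$ next to $p_2$. Freely-reducedness then demands $q_1\neq\ov{q_2}$ and $p_1\neq\ov{p_2}$, which follow from the reducedness of the cyclic words $c(q_2\ov{p_1}x_1)$ and $c(\ov{q_1}p_2\ov{x_2})$ guaranteed by $\mathrm{sign}(P,Q)\neq0$ (Lemma~\ref{le2}(a)), again after unwinding the definition of $o(\cdot)$. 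I would then remark that non-emptiness of $\gamma(P,Q)$ is immediate since $l(W_1),l(Z_1)\geq1$ (indeed $W_1$ contains $p_2$ and $Z_1$ contains either $q_2$ or $p_1$), so $l(\gamma(P,Q))=l(W_1)+l(Z_1)\geq2>0$.

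The main obstacle I anticipate is bookkeeping rather than conceptual: correctly identifying, in the type $(2)$ and type $(3)$ cases, \emph{which} letters of $\WW$ and $\ma{Z}$ land at the ends of $W_1$ and $Z_1$ once the ``auxiliary'' subword $Y$ (which need not literally be a subword of either word) is stripped off, and confirming that the linking conditions of Definition~\ref{d1} — which are stated in terms of the values $o(c(\cdots))$ — really do encode the four inequalities at the seams. A clean way to organise this is to prove a small sub-lemma first: ``if $o(c(z_1z_2\cdots z_k))\neq0$ then $c(z_1\cdots z_k)$ is reduced, hence $z_i\neq\ov{z_{i+1}}$ for all $i$ mod $k$,'' which is immediate from clause (iii) of the definition of $o$, and then apply it uniformly in all three cases. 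With that sub-lemma in hand the proposition becomes a short case check.
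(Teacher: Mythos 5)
The paper itself does not prove this proposition --- it defers to \cite{moira} --- so your proposal has to stand on its own, and while your overall strategy is the right one (since $\WW$ and $\ma{Z}$ are reduced, a failure of reducedness in $\gamma(P,Q)=c(W_1Z_1)$ can only occur at the two seams, so it suffices to verify two inequalities in each of the three types), the execution has genuine errors in two of the three cases. In type $(1)$ you correctly identify the seam conditions as $p_1\neq\ov{q}_2$ and $q_1\neq\ov{p}_2$, but you then try to extract them from the \emph{reducedness} of $c(\ov{p}_1\ov{q}_1p_2q_2)$, and that does not work: reducedness of that four-letter cyclic word gives $\ov{p}_1\neq q_1$, $\ov{q}_1\neq\ov{p}_2$ (i.e.\ $q_1\neq p_2$, not $q_1\neq\ov{p}_2$ as you wrote), $p_2\neq\ov{q}_2$ and $q_2\neq p_1$, and none of these is a seam condition; the two you single out, $p_2\neq\ov{q}_2$ and $q_2\neq p_1$, involve the wrong pairs of letters. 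What actually closes this case is the \emph{injectivity} clause in the definition of $o$: $o\bigl(c(\ov{p}_1\ov{q}_1p_2q_2)\bigr)\neq 0$ forces the four letters $\ov{p}_1,\ov{q}_1,p_2,q_2$ to be pairwise distinct, and the distinctness relations $\ov{p}_1\neq q_2$ and $\ov{q}_1\neq p_2$ are exactly $p_1\neq\ov{q}_2$ and $q_1\neq\ov{p}_2$. So your sub-lemma (``$o\neq 0$ implies reduced'') is true but insufficient; the statement you need is ``$o\neq 0$ implies the letters are pairwise distinct.''

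In type $(3)$ you analyse the wrong seams: you import Lemma~\ref{le1}(ii), which describes the one-word cobracket construction (where $\WW=c\bigl(YW_1\ov{Y}W_2\bigr)$ and the pieces run from $p_2$ to $q_1$ and from $q_2$ to $p_1$), but the relevant construction here is Definition~\ref{d2}(ii), under which $W_1$ is the subword of $\WW$ running from $p_2$ to $p_1$ and $Z_1$ is the subword of $\ma{Z}$ running from $q_2$ to $q_1$. Hence the seams of $c(W_1Z_1)$ again juxtapose $p_1$ with $q_2$ and $q_1$ with $p_2$, and the required inequalities $p_1\neq\ov{q}_2$ and $p_2\neq\ov{q}_1$ are precisely the two inequalities written explicitly into Definition~\ref{d1}(3) --- no appeal to $o$ is needed here at all. (The conditions you derive, $q_1\neq\ov{q}_2$ and $p_1\neq\ov{p}_2$, are not the ones that arise, and they do not follow from the reducedness of $c(q_2\ov{p}_1x_1)$ and $c(\ov{q}_1p_2\ov{x}_2)$ in any case.) Type $(2)$, about which you were rightly nervous, is in fact the easy case: the last letter of $W_1$ is $x_2$ (the last letter of $Y$), not $p_1$, and likewise for $Z_1$, so the seams read $x_2q_2$ and $x_2p_2$, which are already subwords of the reduced words $Q$ and $P$ respectively, so no new inequality is needed. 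Your non-emptiness remark is fine.
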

The proof of this result is in \cite{moira}.

\begin{lem}\label{l2}
Let $\mathcal{V}$ and $\m$ be reduced cyclic words and
$(P,Q)\in\mathbb{LP}_2(\mathcal{V},\m)$, then
$\gamma(P,Q)=\gamma(Q,P)$.
\end{lem}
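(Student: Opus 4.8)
The plan is to show that the cyclic word $\gamma(P,Q)$ attached to a type-$(1)$, $(2)$ or $(3)$ linked pair of $(\VV,\WW)$ does not depend, up to cyclic equivalence, on whether we read the pair as $(P,Q)\in\mathbb{LP}_2(\VV,\WW)$ or as $(Q,P)\in\mathbb{LP}_2(\WW,\VV)$. The key observation is that by Lemma~\ref{le2}(b) the operation $(P,Q)\mapsto(Q,P)$ sends linked pairs to linked pairs, and that the defining data of the pair (the words $P$, $Q$, the common factor $Y$ in types $(2)$ and $(3)$) is symmetric under this swap. So the work is purely to match the linear representatives produced by Definition~\ref{d2} after the swap, and observe that $c(-)$ absorbs the discrepancy.

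First I would dispose of types $(1)$ and $(2)$ together. Here $\gamma(P,Q)=c(W_1Z_1)$ where $W_1$ is the representative of $\VV$ cut immediately before $p_2$ and $Z_1$ is the representative of $\WW$ cut immediately before $q_2$. Reading the pair as $(Q,P)\in\mathbb{LP}_2(\WW,\VV)$, Definition~\ref{d2} produces $\gamma(Q,P)=c(Z_1'W_1')$ where $Z_1'$ is $\WW$ cut before $q_2$ and $W_1'$ is $\VV$ cut before $p_2$; thus $Z_1'=Z_1$ and $W_1'=W_1$, and $\gamma(Q,P)=c(Z_1W_1)=c(W_1Z_1)=\gamma(P,Q)$ using only the cyclic invariance $c(AB)=c(BA)$. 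Then I would handle type $(3)$: $P=p_1Yp_2$ is in $\VV^{\,j}$ and $Q=q_1\ov{Y}q_2$ is in $\WW^{\,k}$, so swapping to $(Q,P)$ we have $Q=q_1\ov Yq_2$ in the r\^ole of the first word and $P=p_1Yp_2$ with $Y$ replaced by $\ov Y$ (since $\ov{\ov Y}=Y$) in the r\^ole of the second; the $W_1$ of Definition~\ref{d2}(ii) for $(P,Q)$ is the subword of $\VV$ running from just after $Y$ to just before $Y$, which is exactly the $Z_1$-type subword of the second word for the pair $(Q,P)$, and symmetrically for $Z_1$. Hence $\gamma(Q,P)=c(Z_1W_1)=c(W_1Z_1)=\gamma(P,Q)$ again.

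The only mild subtlety — and the step I expect to need the most care — is type $(3)$, where one must be careful that $Y$ need not literally occur as a subword of $\VV$ or $\WW$ (it may straddle the cut point of the relevant power), so "the subword starting right after $Y$" must be read through the power $\VV^{\,j}$ or $\WW^{\,k}$ exactly as in Definition~\ref{d2}(ii); the parenthetical remark there already licenses this, and the point is simply that locating "the first and last letters of $Y$" is symmetric in the two words, so the two constructions yield the same pair of linear words up to the order of concatenation. Packaging the three cases gives $\gamma(P,Q)=\gamma(Q,P)$ in all cases, which completes the proof.
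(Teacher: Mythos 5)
Your proof is correct and follows the same route as the paper: both arguments observe that swapping the pair to $(Q,P)$ simply interchanges the two linear words produced by Definition~\ref{d2}, so the two concatenations agree under the cyclic equivalence $c(AB)=c(BA)$. The paper's version is even terser (it does not separate the three types), so your extra care with type $(3)$ and the $Y\leftrightarrow\overline{Y}$ bookkeeping only adds detail, not a new idea.
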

\begin{proof}
For $(P,Q)\in\mathbb{LP}_2(\mathcal{V},\m)$ we have that $(Q,P)\in\mathbb{LP}_2(\m,\mathcal{V})$.\\
By definition $\gamma(P,Q)=c\bigl(V_1W_1\bigr)$ where $V_1$ and $W_1$ are
linear words defined in \ref{d2}. Subsequently, we have that
$\gamma(Q,P)=c\bigl(W_1V_1\bigr)$ and $\gamma(P,Q)=\gamma(Q,P)$.
\end{proof}

\begin{defn}
We define the bracket $[\;,\;]:\mathbb{V}\otimes\mathbb{V}\longrightarrow\mathbb{V}$ as the linear map such that for each pair of cyclic words, $\WW$ and $\ma{Z}$,
$$\bigl[\WW,\ma{Z}\bigr] = \sum_{(P,Q)\in\mathbb{LP}_2(\WW,\ma{Z})}sign(P,Q)\gamma(P,Q).$$

By Proposition \ref{p1}, $\mathbb{LP}_2(\WW,\ma{Z})$ is a finite set and as a consequence the bracket is well defined.
\end{defn}

In the next results we consider that the linked pairs are of the type
 $(2)$, for the type $(3)$ the results are similar.

\begin{prop}
For $(P,Q)\in\mb{LP}_2(\VV,\WW)$, with $l(\WW)\geq l(\VV)$, we have that
$Q\subset\WW^j$ for $j\leq 2$. Moreover, if $j=2$ and
$W_1=q_2B_1q_1B_2$, then  $\WW^2=c\bigl(q_2B_1q_1X\bigr)$.
\end{prop}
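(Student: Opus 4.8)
The plan is to split according to whether $l(\VV)=l(\WW)$ or $l(\VV)<l(\WW)$: the first case is immediate from Remark~\ref{r2}, the second is a length estimate built on Lemma~\ref{l3}, and the ``moreover'' clause is then a direct unwinding of where the given occurrence of $Q$ sits inside the periodic word $W_1^{\infty}$.

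First I would dispose of the case $l(\VV)=l(\WW)$: by Remark~\ref{r2}(i) one has $P\subset\VV$ and $Q\subset\WW$, so $j=1\le 2$ and there is nothing more to prove (the ``moreover'' clause is vacuous). So assume $l(\VV)<l(\WW)$, hence $l(\VV)\le l(\WW)-1$, and write the type~$(2)$ linked pair as $P=p_1Xp_2$, $Q=q_1Xq_2$. Since $P$ is a subword of some $\VV^{j_0}$ and $Q$ of some $\WW^{k_0}$, the common middle word $X$ is a subword of a power of $\VV$ and of a power of $\WW$; taking the least such exponents $j'$, $k'$, minimality forces $(j'-1)l(\VV)<l(X)$ and $(k'-1)l(\WW)<l(X)$, because a sub-chunk of the periodic word $V_1^{\infty}$ attached to a linear representative $V_1$ of $\VV$ whose length is at most $(j'-1)l(\VV)$ is already a subword of $\VV^{j'-1}$. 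Lemma~\ref{l3}, applied to $X$, then gives $l(X)<l(\VV)+l(\WW)$, whence
$$l(Q)=l(X)+2<l(\VV)+l(\WW)+2\le 2l(\WW)+1,$$
so $l(Q)\le 2l(\WW)=l(\WW^2)$; as $Q$ is a sub-chunk of $W_1^{\infty}$ this means $Q\subset\WW^2$, i.e.\ $j\le 2$.

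For the ``moreover'' clause, suppose $j=2$, so $l(\WW)<l(Q)\le 2l(\WW)$, $Q\not\subset\WW$, and let $W_1=q_2B_1q_1B_2$ be the representative of $\WW$ obtained (as in Definition~\ref{d2}) by cutting $\WW$ immediately before the occurrence of $q_2$ that ends the given occurrence of $Q$; thus $W_1$ begins with that $q_2$ and the displayed $q_1$ is the occurrence that begins $Q$. Index the bi-infinite periodic word $W_1^{\infty}$ so that one copy of $W_1$ occupies $[0,l(\WW)-1]$; then the relevant copies of $q_2$ sit at indices $\equiv 0 \pmod{l(\WW)}$ and the marked $q_1$ at indices $\equiv l(B_1)+1 \pmod{l(\WW)}$. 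The occurrence of $Q$ has its last letter at some index $m\,l(\WW)$ and its first letter at $m\,l(\WW)-l(Q)+1$; congruence mod $l(\WW)$ together with $l(\WW)<l(Q)\le 2l(\WW)$ and $0\le l(B_1)\le l(\WW)-2$ forces $l(Q)=2l(\WW)-l(B_1)$, so reading $W_1^{\infty}$ from index $(m-2)l(\WW)+l(B_1)+1$ to $m\,l(\WW)$ gives
$$Q=q_1\,B_2\;q_2B_1q_1B_2\;q_2,$$
hence $X=B_2q_2B_1q_1B_2$ and $q_2B_1q_1X=(q_2B_1q_1B_2)(q_2B_1q_1B_2)=W_1^2$, i.e.\ $\WW^2=c(W_1^2)=c(q_2B_1q_1X)$.

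The routine part is the length count; the step that requires care is the bookkeeping in the last paragraph — deciding which occurrence of $q_1$ (and of $q_2$) in $W_1$ is the relevant one when letters repeat, and verifying that the given occurrence of $Q$ realizes exactly the asserted window in $W_1^{\infty}$ and not a shifted one. A secondary point is that Lemma~\ref{l3} is stated for $\VV$ and $\WW$ not powers of a common cyclic word; in the complementary subcase one checks the bound $l(Q)\le 2l(\WW)$ directly, using that $p_1\ne q_1$ together with the periodicity $W_1^{\infty}=V_1^{\infty}$ forces $l(X)$ to be small relative to the common period.
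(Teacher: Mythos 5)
Your proof is correct and follows essentially the same route as the paper's: the equal-length case via the remark forcing $P\subset\VV$ and $Q\subset\WW$, the bound $j\le 2$ via the length estimate $l(X)<l(\VV)+l(\WW)$ (you inline the argument behind the paper's proposition bounding the exponents $j,k$ rather than citing it), and the same identification $Q=q_1B_2q_2B_1q_1B_2q_2$, $X=B_2q_2B_1q_1B_2$ when $j=2$. The only differences are ones of detail in your favor: you supply the positional bookkeeping justifying the ``moreover'' clause, and you flag the hypothesis of the length lemma (that $\VV$ and $\WW$ not be powers of a common word) with a workable sketch for the degenerate case, both of which the paper leaves implicit.
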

\begin{proof}
For $l(\WW)=l(\VV)$, by the remark \ref{r2}, we have that  $P\subset \VV$ and
$Q\subset \WW$. As a consequence $j=1$. For $l(\WW)>l(\VV)$, by the proposition
\ref{p2}, we conclude that $\displaystyle{j\leq 2+\frac{l(\VV)}{l(\WW)}<3}$.

Suppose now that $j=2$. Since $(P,Q)$ is a linked pair of type $(2)$, then we
have  linear representations of $\VV$ and $\WW$ as
$V_1=p_2A_1p_1A_2$ and $W_1=q_2B_1q_1B_2$ respectively. We know that
$Q\subset\WW^2$ and $Q\not\subset\WW$, then
$Q=q_1B_2q_2B_1q_1B_2q_2$, $X=B_2q_2B_1q_1B_2$ and
$W_1^2=q_2B_1q_1qX$.
\end{proof}

\begin{prop}\label{p2}
For $(P,Q)\in\mb{LP}_2(\VV,\WW)$, there exist
$\bigl(P_1,Q_1\bigr)\in\mb{LP}_2\bigl(\gamma(P,Q),\WW\bigr)$ and
$\bigl(P_2,Q_2\bigr)\in\mb{LP}_2(\VV,\gamma(P,Q))$ such that $P_1=p_1XW_1p_2$,
$Q_1=q_1XW_1q_2$, $P_2=p_1XV_1p_2$ and $Q_2=q_1XV_1q_2$.
\end{prop}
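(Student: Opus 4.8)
The plan is to produce the claimed linked pairs $(P_1,Q_1)$ and $(P_2,Q_2)$ explicitly, by tracking how the subwords $P$ and $Q$ of $\VV$ and $\WW$ survive the surgery defining $\gamma(P,Q)$. Recall from Definition \ref{d2} that for a type-$(2)$ linked pair $(P,Q)\in\mathbb{LP}_2(\VV,\WW)$ with $P=p_1Xp_2$, $Q=q_1Xq_2$, we cut $\VV$ immediately before $p_2$ and $\WW$ immediately before $q_2$, obtaining linear representatives $V_1$ (beginning with $p_2$) and $W_1$ (beginning with $q_2$), and $\gamma(P,Q)=c(V_1W_1)$. The first step is to write $V_1=p_2\cdots p_1$ and $W_1=q_2\cdots q_1$ as linear words, so that inside the cyclic word $\gamma(P,Q)=c(V_1W_1)$ the letter $p_1$ (last letter of $V_1$) is immediately followed by $q_2$ (first letter of $W_1$), and $q_1$ (last letter of $W_1$) is immediately followed, cyclically, by $p_2$ (first letter of $V_1$). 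Consequently $\gamma(P,Q)$ contains, as cyclic subwords, both $p_2\cdots p_1 XW_1$ read as $p_1XW_1p_2$ — wait, more carefully: reading around $c(V_1W_1)$ one sees the occurrence $p_1\,q_2\cdots q_1\,p_2$, i.e. the block $W_1$ framed by $p_1$ on the left and $p_2$ on the right, and likewise the block $V_1$ framed by $q_1$ on the left and $q_2$ on the right.

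The second step is to exhibit $P_1=p_1XW_1p_2$ and $Q_1=q_1XW_1q_2$ as subwords of suitable powers of $\gamma(P,Q)$ and of $\WW$, and to check Definition \ref{d1}(2). The common middle word is $Z:=XW_1$; by construction $P_1$ and $Q_1$ are subwords of $\gamma(P,Q)$ (or a small power of it, using Proposition \ref{p2} — the bound $j\le 2$ — exactly as in the preceding proposition) and $P_1,Q_1$ are also occurrences in $\WW^k$ for the relevant $k$, since $XW_1$ is visible in $\WW$ (as $X$ continues past $q_2$ to wrap around $W_1$). The head/tail letters are $p_1,p_2$ on one side and $q_1,q_2$ on the other, so $p_1\neq q_1$ and $p_2\neq q_2$ follow from the fact that $(P,Q)$ is a linked pair. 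The orientation condition $o(c(\ov p_1\ov q_1 x_1))=o(c(p_2q_2\ov x_2))$ for $Z=x_1\cdots x_2$: here $x_1$ is the first letter of $X$ and $x_2$ is the last letter of $W_1$, which is $q_1$. So the left orientation condition involves $\ov p_1\ov q_1 x_1$ — literally the same triple that witnesses $(P,Q)$ being type $(2)$, giving $o\neq 0$; and the right one involves $p_2q_2 x_2^{-1}$ where $x_2=q_1$, and one must verify this matches. The key observation is that $o$ depends only on which letters of $\mathcal{O}$ are hit and their cyclic order, so inserting the already-reduced block $W_1$ between the frame letters does not change the orientation class; this reduces the new orientation equation to the old one. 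The pair $(P_2,Q_2)$ with common middle $XV_1$ is handled symmetrically, using $\gamma(P,Q)=\gamma(Q,P)=c(W_1V_1)$ from Lemma \ref{l2} and swapping the roles of $\VV$ and $\WW$.

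The main obstacle I expect is the \emph{length/occurrence bookkeeping}: one must confirm that $P_1=p_1XW_1p_2$ and $Q_1=q_1XW_1q_2$ genuinely occur as subwords of the appropriate powers $\gamma(P,Q)^{j_1}$ and $\WW^{k_1}$ satisfying the defining inequalities $l(\gamma(P,Q)^{j_1-1})<l(P_1)\le l(\gamma(P,Q)^{j_1})$ and $l(\WW^{k_1-1})<l(Q_1)\le l(\WW^{k_1})$, i.e. that $(P_1,Q_1)$ actually lands in $\mathbb{LP}_2$ and not merely that the letter strings formally make sense. This is a matter of using $l(\gamma(P,Q))=l(\VV)+l(\WW)$ together with the bounds of the previous proposition ($j\le 2$) and Lemma \ref{l3}, plus checking the relevant words are freely reduced (which follows from the Proposition asserting $\gamma(P,Q)$ is a reduced cyclic word). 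Everything else is the kind of direct letter-chasing already carried out in the proofs of Proposition \ref{p3} and the preceding proposition, so once the framing picture is set up the verification is routine.
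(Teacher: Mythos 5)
Your overall strategy --- exhibit the four words explicitly, observe that the linked-pair conditions transfer from $(P,Q)$ because the frame letters are unchanged, then verify that each word occurs in a suitable power of the relevant cyclic word --- is the same as the paper's, and in the case $l(\VV)=l(\WW)$ (where $P\subset\VV$ and $Q\subset\WW$ by Remark \ref{r2}, so $V_1=p_2Ap_1X$ and $W_1=q_2Bq_1X$) your framing argument does essentially what the paper does. But your letter-tracking is off in two places: cutting $\VV$ immediately before $p_2$ puts $X$ at the \emph{end} of $V_1$, so the last letter of $V_1$ is $x_2$, not $p_1$ (in $c(V_1W_1)$ the letter $p_1$ is followed by $X$ and only then by $q_2$); likewise the last letter of $W_1$ is $x_2$, not $q_1$. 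The second slip actually makes the orientation check easier than you believe: since $XW_1$ begins with $x_1$ and ends with $x_2$, the equation required for $(P_1,Q_1)$ is \emph{literally} the one already satisfied by $(P,Q)$, and no ``inserting a reduced block preserves the orientation class'' principle is needed (as stated, that principle is too vague to verify anyway).

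The genuine gap is in the occurrence bookkeeping that you defer as routine. When $l(\VV)<l(\WW)$, the word $P$ may occur only in $\VV^{i}$ with $i\geq 2$, so that $X=A_2V_1^{\,i-1}$ wraps several times around $\VV$. Then $P_1=p_1XW_1p_2=p_1A_2V_1^{\,i-1}W_1p_2$ is \emph{not} obtained by naive reading around $c(V_1W_1)$ or its powers: in $\gamma(P,Q)^m=c\bigl((V_1W_1)^m\bigr)$ the block following $A_2$ (the tail of $V_1$) is $W_1$, not another copy of $V_1$. So $P_1\subset\gamma(P,Q)^2$ does not follow from $l\bigl(\gamma(P,Q)\bigr)=l(\VV)+l(\WW)$ together with the bound $j\leq 2$ and reducedness of $\gamma(P,Q)$; it requires the structural identities the paper extracts from the long common subword $X$, namely $W_1=q_2DV_1^{\,l}$ with $V_1=p_2Cq_2D=p_2Eq_1B_2$, whence $W_1V_1=q_2Dp_2CW_1$ and $\gamma(P,Q)^2$ can be rewritten so that $p_1XW_1$ becomes visible. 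The same issue arises for $Q_2\subset\gamma(P,Q)^2$. You correctly flagged this step as the main obstacle, but the tools you propose for closing it are not sufficient, and this case is the bulk of the paper's proof.
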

\begin{proof}
For $l(\VV)=l(\WW)$, we have that  $Q\subset\WW$, $P\subset \VV$, $V_1=p_2Ap_1X$ and $W_1=q_2Bq_1X$. Therefore, $\gamma(P,Q)=c\bigl(p_2Ap_1Xq_2Bq_1B\bigr)$ and $P_1=p_1XW_1p_2$,
$Q_2=q_1XV_1q_2$ are linear subwords of $\gamma(P,Q)^2$.  Note that $\WW^2=c\bigl(q_1XW_1q_2X\bigr)$ as a consequence of $Q_1$ is a linear subword of $\WW^2$.
By the other hand $\VV^2=c\bigl(p_1XV_1p_2A\bigr)$ hence $P_2$ is a linear sub-word of $\VV^2$.

For $l(\VV)<l(\WW)$, we are going to prove only the case $j=2$, the other case, when $j=1$, is left to the reader (the procedure is similar).\\
Let  $P=p_1Xp_2$, $Q=q_1Xq_2$ and $V_1=p_2A_1p_1A_2$, $W_1=q_2B_1q_1B_2$ be linear representatives of $\mathcal{V}$ and $\mathcal{W}$ respectively.\\
Applying the lemma \ref{l2} we have that $ W_1^2=q_2B_1q_1X$, where $ X=B_2q_2B_1q_1B_2$.\\
By the other hand, since $ P\in\mathcal{V}^i$, then we conclude that $ V_1^i=p_2A_1p_1X$ and $X=A_2V_1^{i-1}$, where $i\geq 2$. This is because $l(\ma{W})>l(\ma{V})$ and $j=2$.

In the figure \ref{fig5} we give a pictorial representation of these words.
\begin{figure}[htp]
    \centering
    \scalebox{1.2}{\includegraphics{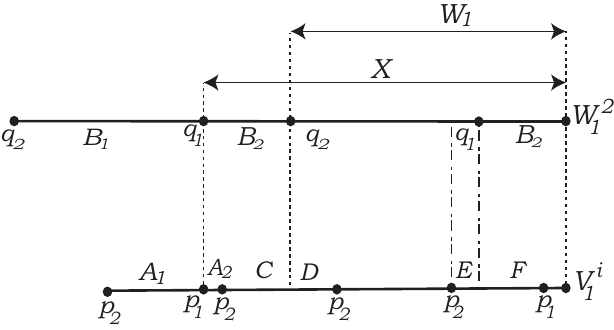}}
    \caption{Representation of the power of $\ma{W}$ and $\ma{V}$.}\label{fig5}
\end{figure}

Now, since $l(\mathcal{V})<l(\mathcal{W})$ and $l(X)<l(\VV)+l(\WW)$, then we have that $l(B_2)<l(\VV)$, $W_1=q_2DV_1^l=q_2DV_1^{l-1}p_2Eq_1B_2$ and $V_1=p_2Eq_1B_2=p_2Cq_2D$ (see figure \ref{fig5}). Moreover, $B_2=A_2p_2C=Fp_1A_2$.

Since $(P,Q)$ is a linked pair, then the pairs $(P_1,Q_1)$ and $(P_2,Q_2)$ are linked pairs too. To complete the proof we need to prove that the pairs live in the corresponding words.
\begin{enumerate}
\item $Q_1$ is a subword of some power of $\ma{W}$: this is because $W_1= q_2B_1q_1B_1$ and $X=B_2W_1$. Hence $W_1^3=q_2B_1q_1XW_1$. In particular $Q_1\subset c\bigl(W_1^3\bigr)$.
\item $P_1$ is a subword of some power of $\gamma(P,Q)$: first, note that $W_1V_1=q_2Dp_2CW_1$, where $V_1=p_2Cq_2D$. Then $\gamma(P,Q)^2=c\bigl(W_1V_1\bigr)^2=c\bigl(q_2Dp_2CW_1W_1V_1\bigr)=c\bigl(q_2Ap_1A_2p_2CW_1W_1V_1\bigr)=c\bigl(q_2Ap_1XW_1V_1\bigr)$. Therefore, $P_1\subset \gamma(P,Q)^2$.
\item $P_2$ is a subword of some power of $\ma{V}$: similar as in  the first case we have that  $V_1^i=p_2A_1p_1X$, hence $V_1^{i+1}=p_2A_1p_1XV_1$ and consequently $P_2\subset c\bigl(V_1^{i+1}\bigr)$.
\item $Q_2$ is a subword of some power of $\gamma(P,Q)$: first, note that $V_1^l=p_2Eq_1B_2$. Hence
$$\gamma(P,Q)^2 = c\bigl(p_2Eq_1B_2W_1V_1W_1\bigr) = c\bigr(p_2Eq_1XV_1W_1\bigl) = c\bigr(p_2Eq_1XV_1q_2B_1q_1B_2\bigl)$$
 and for that reason we have $Q_2\subset \gamma(P,Q)^2$.
\end{enumerate}
\end{proof}

\begin{cor}
For $(P,Q)\in\mb{LP}_2(\ma{V},\ma{W})$, with $P=p_1Xp_2$ and $Q=q_1Xq_2$, we have that $X\subset\gamma(P,Q)^2.$
\end{cor}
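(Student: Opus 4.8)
The statement to prove is that for a linked pair $(P,Q) \in \mb{LP}_2(\ma{V},\ma{W})$ of type $(2)$ with $P = p_1Xp_2$ and $Q = q_1Xq_2$, the common middle word $X$ is a subword of $\gamma(P,Q)^2$. The plan is to read this off directly from the structural descriptions of $\gamma(P,Q)^2$ already produced in the proof of Proposition \ref{p2}. Recall that $\gamma(P,Q) = c(W_1Z_1)$ (in the notation of Definition \ref{d2}, here $V_1W_1$ after renaming), and in the course of proving Proposition \ref{p2} we exhibited $P_1 = p_1XW_1p_2$ as a linear subword of $\gamma(P,Q)^2$, and also $P_2 = p_1XV_1p_2$. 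Since $X$ occurs as a factor of $P_1$ (namely $P_1 = p_1 \cdot X \cdot W_1p_2$), and $P_1$ itself is a subword of $\gamma(P,Q)^2$, the word $X$ is \emph{a fortiori} a subword of $\gamma(P,Q)^2$. So the corollary is an immediate consequence.

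First I would split into the two cases exactly as in Proposition \ref{p2}: the balanced case $l(\ma{V}) = l(\ma{W})$ and the unbalanced case $l(\ma{V}) < l(\ma{W})$ (the reverse inequality being symmetric via Lemma \ref{l2}, which gives $\gamma(P,Q) = \gamma(Q,P)$). In the balanced case we have the explicit representative $\gamma(P,Q) = c(p_2Ap_1Xq_2Bq_1B)$ shown there, in which $X$ visibly appears as a block; no squaring is even needed. In the unbalanced case, the proof of Proposition \ref{p2} already establishes $\gamma(P,Q)^2 = c(q_2Ap_1XW_1V_1)$, and again $X$ appears explicitly as a contiguous block inside this linear representative of $\gamma(P,Q)^2$. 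Hence in both cases $X \subset \gamma(P,Q)^2$.

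The only point requiring a word of care is bookkeeping: one must check that the occurrence of $X$ we are pointing to genuinely sits inside a single copy of a length-$2\,l(\gamma(P,Q))$ window, i.e.\ that $l(X) \leq 2\,l(\gamma(P,Q))$, so that "$X$ is a subword of $\gamma(P,Q)^2$" is meaningful. This follows from Lemma \ref{l3}: we have $l(X) < l(\ma{V}) + l(\ma{W})$, while $l(\gamma(P,Q)) = l(\ma{V}) + l(\ma{W})$ (the reduced cyclic word $\gamma(P,Q) = c(V_1W_1)$ has length $l(V_1) + l(W_1) = l(\ma{V}) + l(\ma{W})$, since no cancellation occurs at the seams — this is the content of the reducedness Proposition cited from \cite{moira}). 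Therefore $l(X) < l(\gamma(P,Q)) \leq 2\,l(\gamma(P,Q))$, and in fact $X$ already fits inside \emph{one} period together with room to spare, which is why the explicit representatives displayed above suffice.

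I do not expect a real obstacle here; the corollary is genuinely a corollary, and the author's remark "The proof of this corollary is an immediate consequence of the last proposition" (phrased for the earlier Corollary \ref{c1}) applies equally well. If anything, the mildly delicate part is purely notational: the ambient cyclic word in which the linked pair $(P,Q)$ lives is a power of $\ma{V}$ on the $P$-side and a power of $\ma{W}$ on the $Q$-side, so "$X$" as it appears in $P$ and "$X$" as it appears in $Q$ are, strictly speaking, two different occurrences of the same linear word; one should make clear that the occurrence being tracked into $\gamma(P,Q)^2$ is the one coming along with $p_1$ and $p_2$ via the subword $P_1 = p_1XW_1p_2$ constructed in Proposition \ref{p2}. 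With that understood, the proof is a single sentence invoking $P_1 \subset \gamma(P,Q)^2$.
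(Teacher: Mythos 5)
Your proposal is correct and matches the paper's (implicit) argument: the corollary is stated as an immediate consequence of the preceding proposition, exactly because $X$ is a factor of $P_1=p_1XW_1p_2$, which that proposition already exhibits as a subword of $\gamma(P,Q)^2$. Your additional length bookkeeping via Lemma \ref{l3} is a harmless (and slightly more careful) elaboration of the same route.
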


The next technical results are necessary to give a proof of the Jacobi axiom. They guarantee that given $(P,Q)\in\mb{LP}_2(\ma{V},\ma{W})$ and $(R,S)\in\mb{LP}_2\bigl(\ga(P,Q),\ma{Z}\bigr)$ there exist $\bigl(P_1,Q_1\bigr)$ and $\bigl(R_1,S_1\bigr)$ such that the term $\bigl[[\ma{V},\ma{W}]_{(P,Q)},\ma{Z}\bigr]_{(R,S)}$ is canceled by a term in $\bigl[[\ma{W},\ma{Z}],\ma{V}\bigr]$ or $\bigl[[\ma{Z},\ma{V}],\ma{W}\bigr]$ related with the linked pairs constructed.

\begin{lem}\label{l3}
For $\mathcal{V}$, $\ma{W}$ and $\ma{Z}$ cyclic words such that $l(\ma{Z})\leq l({\ma{V}})\leq l(\ma{W})$ and $(P,Q)\in \mb{LP}_2(\ma{V},\ma{W})$, $(R,S)\in\mb{LP}_2\bigl(\ma{Z},\ga(P,Q)\bigr)$, exists a linked pair $(T,U)\in\mb{LP}_2(\ma{W},\ma{Z})$ or $(T,U)\in\mb{LP}_2(\ma{V},\ma{Z})$.
\end{lem}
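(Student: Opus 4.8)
The plan is to analyze directly where the subwords $R$ and $S$ of $\ga(P,Q)$ live once we unwind $\ga(P,Q)=c(V_1W_1)$, and then show that one of the two must already be (an occurrence of) a subword of a power of $\ma{V}$ or of $\ma{W}$, which—together with the corresponding subword of $\ma{Z}$—produces the desired linked pair. First I would fix linear representatives as in Proposition \ref{p2}: since $l(\ma{Z})\le l(\ma{V})\le l(\ma{W})$, applying the earlier bounds to $(P,Q)\in\mb{LP}_2(\ma{V},\ma{W})$ gives $P\subset\ma{V}$ (by Remark \ref{r2}(i) when $l(\ma{V})=l(\ma{W})$, and by Proposition \ref{p2} and Lemma \ref{l3} otherwise), while $Q$ may range over $\ma{W}^k$ with $k\le 2$. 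Writing $\ga(P,Q)=c(W_1V_1)$ via Lemma \ref{l2}, I would then record, using the Corollary following Proposition \ref{p2}, that the "overlap block" $X$ of $(R,S)$ sits inside $\ga(P,Q)^2$, and use Lemma \ref{l3} (the one bounding $l(P)$) applied now to $(R,S)\in\mb{LP}_2(\ma{Z},\ga(P,Q))$ to get $l(R),l(S)<l(\ma{Z})+l(\ga(P,Q))$.

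The key step is a case analysis on how the occurrence of $S$ (a subword of a power of $\ga(P,Q)=c(W_1V_1)$) meets the decomposition into $W_1$-blocks and $V_1$-blocks. Because $l(\ma{Z})\le l(\ma{V})\le l(\ma{W})$ and $l(R),l(S)$ are controlled by Lemma \ref{l3}, the occurrence of $S$ cannot be long enough to straddle a full copy of both $W_1$ and $V_1$; hence $S$ is contained in a window of $\ga(P,Q)^2$ that lies entirely inside one copy of $W_1$ followed by (part of) the adjacent $V_1$, or symmetrically. I would split according to whether $S$ is actually a subword of a power of $\ma{W}$ (the "$W_1$-side" case) or of a power of $\ma{V}$ (the "$V_1$-side" case); in the first case the pair $(S',R)$ or $(R,S')$ obtained by reading the occurrence of $S$ inside the $\ma{W}$-power, paired with $R\subset\ma{Z}^{\text{some}}$, is a linked pair in $\mb{LP}_2(\ma{W},\ma{Z})$, and in the second case one lands in $\mb{LP}_2(\ma{V},\ma{Z})$. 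That the resulting pair is genuinely \emph{linked} (i.e. satisfies the conditions of Definition \ref{d1}, not merely the subword-length conditions) follows because the defining $o(\cdot)$-equalities for $(R,S)$ are stated purely in terms of the boundary letters $r_1,r_2,s_1,s_2$ and the endpoints of the common block $Y$, and these letters are unchanged by re-interpreting the same occurrence inside $\ma{W}^k$ or $\ma{V}^i$ instead of inside $\ga(P,Q)$.

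The main obstacle I anticipate is the bookkeeping needed to guarantee that the \emph{occurrence} of $S$ in $\ga(P,Q)=c(W_1V_1)$ does not secretly wrap around so as to use letters from two different copies of $W_1$ (or of $V_1$)—that is, confirming that the length bound from Lemma \ref{l3} really forces $S$ into a single "$W_1$ then $V_1$" window. Here I would use the explicit forms $V_1=p_2Ap_1X$, $W_1=q_2Bq_1X$ from Proposition \ref{p2} (and, when $k=2$, the refined form $W_1=q_2B_1q_1B_2$ with $X=B_2\,q_2B_1q_1\,B_2$ from the Proposition preceding it), so that the decomposition of $\ga(P,Q)$ into blocks is completely explicit and the window containing $S$ can be located letter-by-letter. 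Once the window is pinned down, identifying which of $\ma{W}$, $\ma{V}$ the pair $(R,S)$ restricts to is immediate, and the verification that $(T,U)$ lies in the claimed $\mb{LP}_2$ is the routine check described above. The case where $(P,Q)$ or $(R,S)$ is of type $(3)$ is handled by the same argument with $\ov{Y}$ in place of the second occurrence of $Y$, as is standard throughout this section.
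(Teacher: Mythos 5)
There is a genuine gap, and it sits exactly where the real content of the lemma lies. Your argument rests on the dichotomy that the occurrence of $S$ in a power of $\ga(P,Q)=c(V_1W_1)$ must lie either inside a power of $\ma{W}$ or inside a power of $\ma{V}$, so that $(T,U)$ can be obtained by ``re-interpreting the same occurrence'' of $(R,S)$. This dichotomy is false, and the length bound you invoke does not deliver it: from Lemma \ref{l3} one only gets $l(S)<l(\ma{Z})+l(\ma{V})+l(\ma{W})$ (up to the two endpoint letters), which is ample room for $S$ to contain an entire copy of $W_1$ and to cross the seam where $V_1$ and $W_1$ are glued; the paper's own case analysis explicitly includes the subcase $XW_1\subset Y$. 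More to the point, even a short $S$ can have $s_2$ in the $V_1$-block and $s_1$ in the $W_1$-block, and this seam-crossing situation is the geometrically essential one (the seam is the intersection point created by $\ga(P,Q)$), not a degenerate one to be excluded by bookkeeping. In that situation $S$ is a subword of no power of $\ma{V}$ or $\ma{W}$, so there is nothing to ``restrict.''

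What the paper actually does in the crossing cases is manufacture a genuinely new pair $(T,U)$ by splicing endpoint letters of $R$ and $S$ with letters and blocks coming from $P$, $Q$, $V_1$, $W_1$ --- for instance $T=s_1Zq_2$, $U=r_1Zp_2$ with $W_1=Z_1s_1Z$, or $T=p_1Xp_2Bs_2$, $U=q_1Xp_2Br_2$ with $V_1=p_2Bs_2C$ --- and, crucially, which construction is available (landing in $\mb{LP}_2(\ma{W},\ma{Z})$ versus $\mb{LP}_2(\ma{V},\ma{Z})$) is governed by whether $sign(P,Q)=sign(R,S)$ and by whether $XW_1\subset Y$. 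Neither the sign comparison nor the splicing construction appears anywhere in your proposal, and without them the only cases you can handle are the easy ones (where $r_1$ and $r_2$, resp.\ $s_1$ and $s_2$, lie in the same $V_1$- or $W_1$-block), which the paper dispatches in one line by setting $T=S$, $U=R$. To repair the argument you would need to add the seam-crossing cases and carry out the sign-dependent construction of $(T,U)$ from pieces of both linked pairs.
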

\begin{proof}
We are considering $P=p_1Xp_2$, $Q=q_1Xq_2$, $R=r_iYr_2$ and $S=s_1Ys_2$. Remember that, by lemma \ref{l2}, we have that $l(Y)<l(\VV)+l(\WW)+l(\ma{Z})$. This is because $Y$ is a subword of a power of $\ma{Z}$ and $\gamma(P,Q)$.

We can suppose that $s_2\in V_1$ (for $s_2\notin V_1$ the procedure is analogous). Note that since $l\bigl(\gamma(P,Q)\bigr)>l(\ma{Z})$, then $S\subset\gamma(P,Q)^2$.
\begin{figure}[htp]
    \centering
    \scalebox{1.2}{\includegraphics{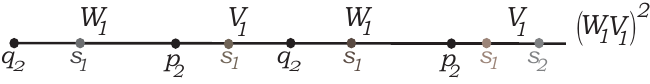}}
    \caption{Possible position of the point $s_1\in\gamma(P,Q)^2$.}\label{fig6}
\end{figure}
In figure \ref{fig6} we can see the posible positions of the point $s_1$.\\Presently we study the four possibilities.
\begin{enumerate}
\item For $s_1\in V_1$, the same representative where $s_2$ lives, we have that $(R,S)\in\mb{LP}_2(\ma{Z},\ma{V})$. Then we define $T=S$ and  $U=R$.
\item For $s_1\in W_1$, note that $XW_1\not\subset Y$. In this case $Y$ intersects $W_1$.
\begin{enumerate}
\item If $sign(P,Q)\neq sign(R,S)$, then we can construct a linked pair $(T,U)\in\mb{LP}_2(\ma{W},\ma{Z})$. Since $s_1\in W_1$, we can find $Z\subset W_1$ such that $W_1=Z_1s_1Z$. For this reason we define $T=s_1Zq_2$ and $U=r_1Zp_2$.
\item If $sign(P,Q) = sign(R,S)$, we have two possible cases. If $s_1\in XV_1$, then $(R,S)\in\mb{LP}_2(\VV,\ma{Z})$ but if $s_1\not\in XV_1$, then it is possible to construct $(T,U)\in\mb{LP}_2(\VV,\ma{Z})$. Since $s_2\in V_1$, then we can decompose $V_1=p_2Bs_2C$ and we can define $T=p_1Xp_2Bs_2$ and $U=q_1Xp_2Br_2$.
\end{enumerate}
\item For $s_1\in V_1$ or $s_1\in W_1$, similarly as before, we need to divide the discussion in subcases.
\begin{enumerate}
\item Suppose $XW_1\subset Y$. In this case $(P_1,Q_1)\in\mb{LP}_2\bigl(\gamma(P,Q),\ma{W}\bigr)$ is a posible solution. This because $P_1=p_1XW_1p_2\subset\gamma(P,Q)^2$ and $XW_1\subset Y$. Then $T=Q_1$, $U=P_1$ and $(T,U)\in\mb{LP}_2(\WW,\ma{Z})$.
\item Suppose that $XW_1\not\subset Y$. If $s_1\in V_1$ and $sign(P,Q)\neq sign(R,S)$, then we define $T=s_1X_1W_1q_2$ and $U=r_1X_1W_1p_2$, where $V_1=X_2s_1X_2$ and $(T,U)\in\mb{LP}_2(\WW,\ma{Z})$.\\ If $sign(P,Q)= sign(R,S)$ and $s_1\in XV_1$, then $(R,S)\in\mb{LP}_2(\VV,\ma{Z})$. But if $s_1\not\in XV_1$, then $(T,U)\in\mb{LP}_2(\VV,\ma{Z})$, where $T=p_1Xp_2Bs_2$ and $U=q_1Xp_2Br_2$. Finally, if $s_1\in W_1$ and $sign(P,Q)\neq sign(R,S)$, then $(T,U)\in\mb{LP}_2(\VV,\ma{Z})$, where $T=s_1Xp_2$ and $U=r_1Xq_2$.
\end{enumerate}
\end{enumerate}
\end{proof}

\begin{lem} \label{l4}
For $\mathcal{V}$, $\ma{W}$ and $\ma{Z}$ cyclic words such that $l(\ma{Z})\leq l({\ma{V}})\leq l(\ma{W})$ and $(P,Q)\in \mb{LP}_2(\ma{V},\ma{W})$, $(R,S)\in\mb{LP}_2\bigl(\ma{Z},\ga(P,Q)\bigr)$, then we have the following consequences:
\begin{enumerate}
\item[(a)] For $(T,U)\in\mb{LP}_2(\ma{W},\ma{Z})$ there exists $\bigl(T',U'\bigr)\in\mb{LP}_2\bigl(\ma{V},\ga(T,U)\bigr)$ such that
$$\begin{array}{ccc}
    \ga_{(T,U)}\bigl(T',U'\bigr) & = & \ga_{(P,Q)}(R,S) \\
    &\textrm{and}&\\
    \bigl[\ma{V},[\ma{W},\ma{Z}]_{(T,U)}\bigr]_{(T',U')} & = & -\bigl[\ma{Z},[\ma{V},\ma{W}]_{(P,Q)}\bigr]_{(R,S)}.
  \end{array}
$$
\item[(b)] For $(T,U)\in\mb{LP}_2(\ma{V},\ma{Z})$ there exists $\bigl(T',U'\bigr)\in\mb{LP}_2\bigl(\ma{W},\ga(T,U)\bigr)$ such that
$$\begin{array}{ccc}
\ga_{(T,U)}\bigl(T',U'\bigr) & = & \ga_{(P,Q)}(R,S)\\
& \textrm{and}&\\
\bigl[\ma{W},[\ma{Z},\ma{V}]_{(U,T)}\bigr]_{\bigl(T',U'\bigr)} & = & -\bigl[\ma{Z},[\ma{V},\ma{W}]_{(P,Q)}\bigr]_{(R,S)}.
\end{array}$$
\end{enumerate}
\end{lem}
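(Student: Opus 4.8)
The plan is to reduce Lemma \ref{l4} to a careful case analysis that mirrors the construction in the proof of Lemma \ref{l3}. The previous lemma already produced, from the data $(P,Q)\in\mb{LP}_2(\ma{V},\ma{W})$ and $(R,S)\in\mb{LP}_2(\ma{Z},\ga(P,Q))$, an explicit linked pair $(T,U)$ living either in $\mb{LP}_2(\ma{W},\ma{Z})$ or in $\mb{LP}_2(\ma{V},\ma{Z})$, described by explicit linear subwords (e.g.\ $T=s_1Zq_2$, $U=r_1Zp_2$, etc.). What remains in part (a) is, given such a $(T,U)\in\mb{LP}_2(\ma{W},\ma{Z})$, to exhibit the reciprocal pair $(T',U')\in\mb{LP}_2(\ma{V},\ga(T,U))$ and check the two asserted identities; part (b) is entirely analogous with the roles of $\ma{V}$ and $\ma{W}$ interchanged and an extra bar/orientation bookkeeping for type $(3)$.

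First I would fix the generic situation: write $P=p_1Xp_2$, $Q=q_1Xq_2$, $R=r_1Yr_2$, $S=s_1Ys_2$, and recall from Lemma \ref{l2} (and the bound $l(Y)<l(\VV)+l(\WW)+l(\ma{Z})$ used in Lemma \ref{l3}) the explicit linear representatives of the relevant powers of $\ma{V}$, $\ma{W}$, $\ma{Z}$ and of $\ga(P,Q)=c(V_1W_1)$. Then, going through the same four positions of $s_1$ in $\ga(P,Q)^2$ that organize the proof of Lemma \ref{l3}, in each branch I would define $(T',U')$ by the "opposite" cut: whereas $(T,U)$ was obtained by cutting against $\ma{W}$ (resp.\ $\ma{V}$), the pair $(T',U')$ is obtained by cutting $\ga(T,U)$ against $\ma{V}$ (resp.\ $\ma{W}$), reading off the linear words from the same pictures (Figures \ref{fig5}, \ref{fig6}). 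For instance in the branch $s_1\in W_1$ with $sign(P,Q)\neq sign(R,S)$, where $W_1=Z_1s_1Z$ and $T=s_1Zq_2$, $U=r_1Zp_2$, one sets $T'=p_1Xp_2\,(\dots)$ and $U'=q_1X\,(\dots)$ with the middle word dictated by $V_1$; and one verifies $(T',U')$ is $\ma{O}$-linked by checking the relevant $o(\cdot)$ equalities, which hold because $(P,Q)$ and $(R,S)$ were linked.

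Having produced $(T',U')$, the identity $\ga_{(T,U)}(T',U')=\ga_{(P,Q)}(R,S)$ is then a matter of concatenating the linear words on both sides and observing they give the same cyclic word; this is the same "both routes cut $\ma{W}$ out of $\ma{V}$ out of $\ma{Z}$ in a symmetric fashion" phenomenon as in Lemma \ref{l2} and Corollary \ref{c1}, so it reduces to a direct string comparison using the representatives fixed above. For the second identity, $\bigl[\ma{V},[\ma{W},\ma{Z}]_{(T,U)}\bigr]_{(T',U')}=-\bigl[\ma{Z},[\ma{V},\ma{W}]_{(P,Q)}\bigr]_{(R,S)}$, the underlying cyclic word is the same by the first identity, so only the scalar coefficient must be tracked: on the left it is $sign(T,U)\,sign(T',U')$ and on the right $sign(P,Q)\,sign(R,S)$, and one checks using the definition of $sign$ (as an $o(\cdot)$ of an explicit four-letter cyclic word) together with Lemma \ref{le2}(b) that these two products are negatives of each other — the extra minus coming from the transposition built into one of the two cutting orders, exactly as in the coskew-symmetry computation in the proof of Theorem \ref{t1}.

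The main obstacle, as always in this circle of lemmas, is purely bookkeeping: keeping the decompositions of $V_1$, $W_1$ and the various $B_i$, $C$, $D$, $E$, $F$ consistent across all sub-branches (there are effectively the same four positions of $s_1$ as in Lemma \ref{l3}, each splitting further according to whether $sign(P,Q)=sign(R,S)$ and whether $s_1\in XV_1$), and making sure the orientation values $o(\cdot)$ defining the signs match up with the correct overall sign. I expect the type-$(3)$ sub-cases (with the $\overline{Y}$ bookkeeping) and the branch $s_1\in W_1$ to be where the sign computation is most delicate; everything else is a routine repetition of the pattern already established in Lemmas \ref{l2}, \ref{l3} and Proposition \ref{p2}.
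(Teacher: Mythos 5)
Your plan follows the paper's own proof essentially step for step: normalize $\ga_{(P,Q)}(R,S)$ to the form $c\bigl(V_1W_1Z_1\bigr)$, take for $\bigl(T',U'\bigr)$ the pair $(P,Q)$ (or its extension supplied by Proposition \ref{p2}) re-read inside $\ga(T,U)$, verify the cyclic-word equality by direct concatenation of the fixed linear representatives, and track the scalar coefficient with Lemma \ref{le2}(b). The one point to keep firmly in view is the one the paper itself flags: whether alternative (a) or (b) applies is dictated by whether $sign(P,Q)=sign(R,S)$, since the candidate pair in $\mb{LP}_2(\ma{W},\ma{Z})$ produces the wrong overall sign when $sign(P,Q)\neq sign(R,S)$ and one must then switch to the pair in $\mb{LP}_2(\ma{V},\ma{Z})$.
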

\begin{proof}
For $(P,Q)\in\mb{LP}_2(\ma{V},\ma{W})$ and $(R,S)\in\mb{LP}_2\bigl(\ma{Z},\ga(P,Q)\bigr)$, we need to calculate $$\bigl[\ma{Z},[\ma{V},\ma{W}]_{(P,Q)}\bigr]_{(R,S)}=sign(P,Q)sign(R,S)\ga_{(P,Q)}(R,S).$$

By definition $\ga(P,Q)=c\bigl(V_1W_1\bigr)$, where $V_1$ and $W_1$ are linear representatives of $\ma{V}$ and $\ma{W}$ respectively. By the other hand, as $(R,S)\in\mb{LP}_2\bigl(\ma{Z},\ga(P,Q)\bigl)$ is
$\ga_{(P,Q)}(R,S)=c\bigl(Z_2(V_1W_1)_2\bigr)$, where $Z_2$ is a linear representative of $\ma{Z}$ starting in $r_2$ and $(V_1W_1)_2$ is a linear representative of $\ga(P,Q)$ starting in $s_2$.\\

We suppose that $s_2\in V_1$ (the other case is an exercise for the reader). As a consequence of the assumption we have that $V_1=p_2Bs_2A$. In this case we have two possibilities with respect to $\ma{Z}$:
\begin{enumerate}
\item[(i)] The word $Z_2$ can be expressed as $Z_2=r_2Cp_2B$. In this case we have that $\bigl(V_1W_1\bigr)_2=s_2AW_1p_2B$ and
$$\ga_{(P,Q)}(R,S)=c\bigl(r_2Cp_2Bs_2AW_1p_2B\bigr)=c\bigl(p_2Bs_2AW_1p_2Br_2C\bigr)=c\bigl(V_1W_1Z_1\bigr).$$
\item[(ii)] The word $p_2B$ can be expressed as $p_2B=Z_1^ip_2D$ and $Z_1=p_2Dr_2C$,  $Z_2=r_2Cp_2D$. As a consequence
$$\begin{array}{ccccc}
    c\bigl(Z_2(V_1W_1)_2\bigr) & = & \bigl(r_2Cp_2Ds_2AW_1p_2B\bigr)     & = & \bigl(r_2Cp_2Ds_2AW_1Z_1^ip_2D\bigr) \\
                               & = &   c\bigl(Z_1^{i+1}p_2Ds_2AW_1\bigr) & = & c\bigl(Z_1p_2Bs_2AW_1\bigr)\\
                               & = & c\bigl(V_1W_1Z_1\bigr).              &   &
  \end{array}
$$
\end{enumerate}
Finally, we have that  $\ga_{(P,Q)}(R,S)=c\bigl(V_1W_1Z_1\bigr)$.\\

We prove the result for some cases, the remaining cases are left.
\begin{enumerate}
\item[(1)]  For $XW_1\subset Y$, there exists $(T,U)\in\mb{LP}_2(\ma{W},\ma{Z})$, where $T=q_1X_1q_2$ and $U=p_1X_1p_2$ ($X_1=XW_1$).\\
By the proposition \ref{p2} we have that there exists  $\bigl(U_2,T_2\bigr)\in\mb{LP}_2\bigl(\ma{Z},\ga(T,U)\bigr)$ where $U_2=p_1X_1Z_1p_2$  and $T_2=q_1X_1Z_1q_2$. As $T_2$ is a subword of some power of $\ga(T,U)$ and $Q\subset T_2$ is $Q$ a subword of some power of $\ga(T,U)$, then $(P,Q)\in\mb{LP}_2\bigl(\ma{V},\ga(T,U)\bigr)$. In this case we define $T'=P$ and  $U'=Q$. Moreover,
$$\ga_{(T,U)}\bigl(T',U'\bigr)=c\bigl(V_1(Z_1W_1)_1\bigr)=c\bigl(V_1W_1Z_1\bigr)=\ga_{(P,Q)}(R,S).$$
Now, we determine the term  $\bigl[\ma{V},[\ma{W},\ma{Z}]_{(T,U)}\bigr]_{\bigl(T',U'\bigr)}$.
\begin{alignat*}{2}
\bigl[\ma{V},[\ma{W},\ma{Z}]_{(T,U)}\bigr]_{\bigl(T',U'\bigr)}&= sign\bigl(T',U'\bigr)sign\bigl(T,U\bigr)\ga_{(T,U)}(T',U')\\
&=sign(P,Q)sign(Q,P)\ga_{(P,Q)}(R,S)\\
&=-\ga_{(P,Q)}(R,S).
\end{alignat*}
Note that $sign(P,Q)=sign(R,S)$, hence $\bigl[\ma{V},[\ma{W},\ma{Z}]_{(T,U)}\bigr]_{\bigl(T',U'\bigr)}=-\bigl[\ma{Z},[\ma{V},\ma{W}]_{(P,Q)}\bigr]_{(R,S)}.$ But, if $sign(P,Q)\neq sign(R,S)$, then this is not true.
\item[(2)]  For $XW_1\subset Y$ and $sign(P,Q)\neq sign(R,S)$, we consider $T=p_1X_2Bs_2$ and $U=q_1Xp_2Br_2$, with $(T,U)\in\mb{LP}_2(\VV,\ma{Z})$ and $T'=P$, $U'=Q$, where is easy to prove that $(P,Q)\in\mb{LP}_2\bigl(\gamma(T,U),\WW\bigr)$.

We can write $V_2=s_2Ap_2B$ and $Z_2=r_2Cp_2B$, then $V_2Z_2=s_2Ap_2Br_2Cp_2B$, and $\bigl(V_2Z_2\bigr)_1=p_2Br_2Cp_2Bs_2A=Z_1V_1$. Moreover, $\ga_{(T,U)}\bigl(T',U'\bigr)=c\bigl((V_2Z_2)_1W_1\bigr)=c\bigl(Z_1V_1W_1\bigr)=c\bigr(V_1W_1Z_1\bigl).$ Consequently,
$$\begin{array}{ccc}
\bigl[\ma{W},[\ma{Z},\ma{V}]_{(U,T)}\bigr]_{\bigl(U',T'\bigr)} & = & sign\bigl(U',T'\bigr)sign(U,T)\ga_{(U,T)}\bigl(U',T'\bigr)\\
                                                     & = & sign(Q,P)sign(Q,P)\ga_{(U,T)}\bigl(U',T'\bigr)\\
                                                     & = & \ga_{(P,Q)}(R,S)\\
                                                     & = & -\bigl[\ma{Z},[\ma{V},\ma{W}]_{(P,Q)}\bigr]_{(R,S)}.
\end{array}$$
\item[(3)] Let now $XW_1\not\subset Y$ and $sign(P,Q)\neq sign(R,S)$, by lemma \ref{l2}, there exists $(T,U)\in \mb{LP}_2(\ma{W},\ma{Z})$, with $T=r_1X_2q_2$ and $U=s_1X_2p_2$. We need to guaranty the  existence of another linked pair $\bigl(T',U'\bigr)\in\mb{LP}_2\bigl(\ma{V},\ga(T,U)\bigr)$. \\
    Suppose the case $s_1\not\in XV_1$. In this situation, we can construct, applying the proposition \ref{p2}, the linked pair $(R_2,S_2)\in\mb{LP}_2\bigl(\ma{W},\ga(T,U)\bigr)$, where $R_2=r_1X_2W_1q_2$ and $S_2=s_1X_2W_1p_2$.

Note that $Q\subset S_2$, hence $(P,Q)\in\mb{LP}_2\bigl(\ma{V},\ga(R_1,S_1)\bigr)$. Therefore, we can define $T'=P$ and $U'=Q$\\
As $\bigl(T',U'\bigr)\in\mb{LP}_2\bigl(\ma{V},\ga(T,U)\bigr)$ we have that $\ga_{(T,U)}(T',U')=c\bigl(V_1(W_1Z_1)_1\bigr)=c\bigl(V_1W_1Z_1\bigr)$. Then $\ga_{(P,Q)}(R,S)=\ga_{(T,U)}(T',U')$. Finally, we can calculate the following term $\bigl[\ma{V},[\ma{W},\ma{Z}]_{(T,U)}\bigr]_{\bigl(T',U'\bigr)}$.
$$\begin{array}{ccc}
\bigl[\ma{V},[\ma{W},\ma{Z}]_{(T,U)}\bigr]_{\bigl(T',U'\bigr)} & = & sign\bigl(T',U'\bigr)sign(T,U)\ga_{(T,U)}\bigl(T',U'\bigr)\\
                                                     & = & sign(P,Q)sign(R,S)\ga_{(T,U)}\bigl(T',U'\bigr)\\
                                                     & = & -\ga_{(P,Q)}(R,S)\\
                                                     & = & -\bigl[\ma{Z},[\ma{V},\ma{W}]_{(P,Q)}\bigr]_{(R,S)}.
\end{array}$$
\end{enumerate}
\end{proof}

\begin{thm}\label{t2}
$(\mathbb{V},[\;,\;])$ is a Lie algebra.
\end{thm}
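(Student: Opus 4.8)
The plan is to verify the two axioms of a Lie algebra for the pair $(\mathbb{V},[\;,\;])$, namely skew-symmetry and the Jacobi identity, working entirely combinatorially as in the proof of Theorem \ref{t1}. For skew-symmetry, that is $[\;,\;]\circ s=-[\;,\;]$, I would argue exactly as in part (1) of the proof of Theorem \ref{t1}: starting from the definition $[\WW,\ZZ]=\sum_{(P,Q)\in\mathbb{LP}_2(\WW,\ZZ)}sign(P,Q)\,\gamma(P,Q)$, I would use that $(P,Q)\in\mathbb{LP}_2(\WW,\ZZ)$ if and only if $(Q,P)\in\mathbb{LP}_2(\ZZ,\WW)$, that $sign(P,Q)=-sign(Q,P)$ by Lemma \ref{le2}(b), and that $\gamma(P,Q)=\gamma(Q,P)$ by Lemma \ref{l2}. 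Re-indexing the sum over $(Q,P)$ then immediately yields $[\ZZ,\WW]=-[\WW,\ZZ]$.

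The substantial part is the Jacobi identity $[\;,\;]\circ\bigl(id\otimes[\;,\;]\bigr)\circ\bigl(id+\ve+\ve^2\bigr)=0$, i.e.\ that for all reduced cyclic words $\VV,\WW,\ZZ$ one has $\bigl[[\VV,\WW],\ZZ\bigr]+\bigl[[\WW,\ZZ],\VV\bigr]+\bigl[[\ZZ,\VV],\WW\bigr]=0$. Expanding the first bracket gives a double sum over $(P,Q)\in\mathbb{LP}_2(\VV,\WW)$ and $(R,S)\in\mathbb{LP}_2(\gamma(P,Q),\ZZ)$ of terms $sign(P,Q)sign(R,S)\,\gamma_{(P,Q)}(R,S)$, and likewise for the other two cyclic permutations. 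The strategy is to exhibit a sign-reversing involution on the disjoint union of the three index sets that pairs up terms with the same underlying cyclic word $\gamma_{(P,Q)}(R,S)$ and opposite total sign, so everything cancels. By the skew-symmetry already established we may assume without loss of generality $l(\ZZ)\le l(\VV)\le l(\WW)$, reducing to the configuration handled in Lemmas \ref{l3} and \ref{l4}. Given $(P,Q)\in\mathbb{LP}_2(\VV,\WW)$ and $(R,S)\in\mathbb{LP}_2\bigl(\ZZ,\gamma(P,Q)\bigr)$, Lemma \ref{l3} produces a partner linked pair $(T,U)$ lying in $\mathbb{LP}_2(\WW,\ZZ)$ or in $\mathbb{LP}_2(\VV,\ZZ)$, and Lemma \ref{l4} then produces the second partner $(T',U')$ together with the crucial identities $\gamma_{(T,U)}(T',U')=\gamma_{(P,Q)}(R,S)$ and the sign relation showing the two contributions are negatives of each other. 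One must then check that this assignment is an involution — applying the construction to the partner returns the original pair — and that it is a bijection between the relevant pieces of the three expanded sums, so that every term is cancelled exactly once.

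The main obstacle is the bookkeeping in the Jacobi step: one has to organize the case analysis of Lemma \ref{l3} (the four possible positions of the point $s_1$ in $\gamma(P,Q)^2$, further split according to whether $sign(P,Q)=sign(R,S)$ and whether $XW_1\subset Y$) so that the resulting correspondence is manifestly an involution rather than merely a term-by-term cancellation, and verify that no term of $\bigl[[\VV,\WW],\ZZ\bigr]+\bigl[[\WW,\ZZ],\VV\bigr]+\bigl[[\ZZ,\VV],\WW\bigr]$ is left unpaired. Once the pairing is set up, applying it and summing, each pair $\{$term, partner term$\}$ contributes $sign(P,Q)sign(R,S)\,\gamma_{(P,Q)}(R,S)+sign(T,U)sign(T',U')\,\gamma_{(T,U)}(T',U')=0$, so the total triple sum vanishes and the Jacobi identity follows. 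Combining this with skew-symmetry gives that $(\mathbb{V},[\;,\;])$ is a Lie algebra.
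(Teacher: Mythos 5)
Your proposal is correct and follows essentially the same route as the paper: skew-symmetry from the correspondence $(P,Q)\leftrightarrow(Q,P)$ together with Lemma \ref{le2}(b) and Lemma \ref{l2}, and the Jacobi identity by assuming $l(\ma{Z})\leq l(\ma{V})\leq l(\ma{W})$ and invoking Lemmas \ref{l3} and \ref{l4} to cancel each term $\bigl[\ma{Z},[\ma{V},\ma{W}]_{(P,Q)}\bigr]_{(R,S)}$ against a partner in one of the other two cyclic summands. Your explicit remark that the pairing must be checked to be a sign-reversing involution (so that every term is cancelled exactly once) is a point the paper leaves implicit, but it is the same argument.
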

\begin{proof}
\begin{enumerate}
\item[(1)] Skew-symmetry condition: $[\,,\,]\circ s=-[\,,\,]$.
\begin{alignat*}{2}
  [\,,\,]\circ s (\VV\ot\WW) & = [\WW,\VV] \\
   & =  \sum_{(P,Q)\in\mathbb{LP}_2(\WW,\VV)}sign(P,Q)\gamma(P,Q) \\
   & =  -\sum_{(Q,P)\in\mathbb{LP}_2(\VV,\WW)}sign(Q,P)\gamma(Q,P) \\
   & =  -[\WW,\VV].
\end{alignat*}

\item[(2)] Jacobi identity:
$[\,,\,]\circ\bigl(id\otimes[\,,\,]\bigr)\circ\bigl(id+\varepsilon+\varepsilon^2\bigr)=0$.
$$\bigl([\,,\,]\circ(id\otimes[\,,\,])\bigr)\circ\bigl(id+\varepsilon+\varepsilon^2\bigr)\bigl(\VV\ot\WW\ot\ma{Z}\bigr)=\bigl[\VV,[\WW,\ma{Z}]\bigr]+\bigl[\WW,[\ma{Z},\VV]\bigr]+\bigl[\ma{Z},[\VV,\WW]\bigr],$$
and
\begin{alignat*}{2}
\bigl[\ma{Z},[\VV, \ma{W}]\bigr]&=\sum_{(P,Q)\in\mb{LP}_2(\VV,\ma{W})}sign(P,Q)\bigl[\ma{Z},\gamma(P,Q)\bigr]\\
&=\sum_{(P,Q)\in\mb{LP}_2\bigl(\VV,\ma{W}\bigr)}\sum_{(R,S)\in\mb{LP}_2\bigl(\ma{Z},\gamma(P,Q)\bigr)}sign(P,Q)sign(R,S)\gamma(R,S)_{\gamma(P,Q)}.
\end{alignat*}
Notation: $\bigl[\ma{Z},[\ma{V},\ma{W}]_{(P,Q)}\bigr]_{(R,S)}=sign(P,Q)sign(R,S)\gamma(R,S)_{\gamma(P,Q)}.$\\

We can suppose, without any lost of generality, that $l(\ma{Z})\leq l(\ma{V})\leq l(\ma{W})$.

For $(P,Q)\in\mb{LP}_2(\ma{V,\ma{}W})$ and $(R,S)\in\mb{LP}_2\bigl(\ma{Z},\ga(P,Q)\bigr)$, by the lemmas \ref{l3} and \ref{l4}, there exist $(T,U)\in\mb{LP}_2(\ma{V},\ma{Z})$ and
$\bigl(T',U'\bigr)\in\mb{LP}_2\bigl(\ma{W},\ga(T,U)\bigr)$ such that
$$\bigl[\ma{W},[\ma{Z},\ma{V}]_{(U,T)}\bigr]_{\bigl(T',U'\bigr)} =-\bigl[\ma{Z},[\ma{V},\ma{W}]_{(P,Q)}\bigr]_{(R,S)}$$ or there exist $(T,U)\in\mb{LP}_2(\ma{W},\ma{Z})$ and
$\bigl(T',U'\bigr)\in\mb{LP}_2\bigl(\ma{V},\ga(T,U)\bigr)$ such that
$$\bigl[\ma{V},[\ma{W},\ma{Z}]_{(T,U)}\bigr]_{\bigl(T',U'\bigr)}=-\bigl[\ma{Z},[\ma{V},\ma{W}]_{(P,Q)}\bigr]_{(R,S)}.$$
Then, for each pair $(P,Q)\in\mb{LP}_2(\ma{V},\ma{W})$ and for each pair $(R,S)\in\mb{LP}_2\bigl(\ma{Z},\ga(P,Q)\bigr)$, there exist another pair of linked pairs such that $\bigl[\ma{Z},[\ma{V},\ma{W}]_{(P,Q)}\bigr]_{(R,S)}$ is canceled by the term corresponding to these new pairs. This imply that $\bigl[\mathcal{Z},[\mathcal{V},\m]\bigr]+\bigl[\m,[\mathcal{Z},\mathcal{V}]\bigr]+\bigl[\mathcal{V}, [\m,\mathcal{Z}]\bigr]=0$.

\end{enumerate}
\end{proof}

\section{The compatibility between these structures}
\begin{lem}\label{l1}
For $(P,Q)\in\mb{LP}_1(\VV)$ and $(R,S)\in\mb{LP}_2\bigl(\de_1(P,Q),\de_2(P,Q)\bigr)$, there exist $\bigl(R_1,S_1\bigr)\in\mb{LP}_1(\VV)$  such that $(Q,P)\in\mb{LP}_2\bigl(\de_1(R_1,S_1),\de_2(R_1,S_1)\bigr)$ with
$$\begin{array}{rcl}
    sign\bigl(R_1,S_1\bigr) & = & sign(R,S) \\
    &\textrm{and}&\\
    \gamma_{(P,Q)}(R,S) & = & \gamma_{\bigl(R_1,S_1\bigr)}(Q,P)
  \end{array}
$$
\end{lem}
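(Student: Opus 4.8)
The plan is to mimic the structure of Proposition \ref{p3} and Lemma \ref{l4}: I will take the linked pair $(P,Q) \in \mathbb{LP}_1(\VV)$, which by Lemma \ref{le1} gives a decomposition of $\VV$ (either $\VV = c(W_1W_2)$ with $\de_1(P,Q) = c(W_1)$, $\de_2(P,Q) = c(W_2)$, or $\VV = c(YW_1\overline{Y}W_2)$ in type $(3)$), and then unfold the second linked pair $(R,S) \in \mathbb{LP}_2(\de_1(P,Q),\de_2(P,Q))$. The key point is that $(R,S)$ involves an occurrence $R = r_1Ar_2$ inside some power of $\de_1(P,Q)$ and $S = s_1As_2$ inside some power of $\de_2(P,Q)$, and forming $\gamma_{(P,Q)}(R,S) = c((W_1)_{r_2}(W_2)_{s_2})$ amounts to cutting $\VV$ at four points: the two cuts that separate $W_1$ from $W_2$ (the original $(P,Q)$-cuts) and the two cuts at $r_2$ and $s_2$. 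I would then observe that these same four cut points, regrouped, define a different pair of linked subwords $(R_1,S_1)$ of $\VV$ directly: $R_1$ and $S_1$ are read off as the subwords of $\VV$ obtained by reorganizing the cut data so that the roles of the "$\delta$-cut" and the "$\gamma$-cut" are swapped. Concretely, $R_1$ should be built from $r_1$, the connecting segment, and $p_2$ (or $q_2$), and $S_1$ from $s_1$, the connecting segment, and $q_2$ (or $p_2$), with the linking word being the appropriate common subword forced by the fact that both $(P,Q)$ and $(R,S)$ are linked.

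The steps, in order, would be: (1) case-split on the type of $(P,Q)$ (types $(1)$/$(2)$ versus type $(3)$) and on the type of $(R,S)$, reducing as the paper does to the representative subcase (type $(2)$ for each), the others being analogous; (2) write explicit linear representatives of $\VV$, $\de_1(P,Q)$, $\de_2(P,Q)$ exhibiting all four cut points simultaneously — this is the bookkeeping heart of the argument; (3) from that picture, define $R_1$ and $S_1$ as explicit subwords of $\VV$ and verify directly from Definition \ref{d1} that $(R_1,S_1)$ is a linked pair of one word, i.e. $(R_1,S_1) \in \mathbb{LP}_1(\VV)$, using that the orientation conditions $o(\cdots) \neq 0$ transfer from the hypotheses on $(P,Q)$ and $(R,S)$; (4) check that $(Q,P) \in \mathbb{LP}_2(\de_1(R_1,S_1),\de_2(R_1,S_1))$ by exhibiting $Q$ and $P$ as occurrences in suitable powers of $\de_1(R_1,S_1)$ and $\de_2(R_1,S_1)$, again from the explicit word pictures; (5) verify the two asserted equalities: $sign(R_1,S_1) = sign(R,S)$ follows because both signs are computed from $o$ applied to words built out of the same letters around the shared cut, and $\gamma_{(P,Q)}(R,S) = \gamma_{(R_1,S_1)}(Q,P)$ follows because both sides are, by construction, the cyclic word obtained by making the same four cuts on $\VV$ and concatenating the same two arcs (just grouped differently into $\de$- and $\gamma$-operations), exactly as in the identifications of Corollary \ref{c1} and Lemma \ref{l4}.

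The main obstacle will be step (2)–(3): getting the combinatorics of the four cut points right in every case, and in particular pinning down the correct common linking subword $A_1$ (the analogue of $Y$ in Definition \ref{d1}) for the new pair $(R_1,S_1)$, since the linking word for $(R_1,S_1)$ is not literally $A$ or $X$ but a concatenation involving both the $(P,Q)$-data and the $(R,S)$-data; one must show this concatenated word genuinely occurs in the right place in $\VV$ and that the two "ends" $r_1$ versus $p_2$ (respectively $s_1$ versus $q_2$) are distinct as required by Definition \ref{d1}(2). The inequality $l(Y) < l(\VV)$-type bounds (Lemma \ref{l3}, invoked as in Lemma \ref{l4}) will be needed to ensure the relevant occurrences are genuine and that $(R_1,S_1)$ really lies in $\mathbb{LP}_1(\VV)$ rather than in a power of $\VV$. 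Once the word-level picture is fixed, steps (4) and (5) are essentially formal, paralleling the reasoning already used in Proposition \ref{p3}, Corollary \ref{c1}, and Lemma \ref{l4}.
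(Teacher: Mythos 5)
Your plan coincides with the paper's proof in all essentials: the paper likewise writes explicit linear representatives of $\VV$ exhibiting the four cut points coming from $(P,Q)$ and $(R,S)$, defines $(R_1,S_1)$ by regrouping those cuts (in the balanced case $l(\de_1(P,Q))=l(\de_2(P,Q))$ one simply has $(R_1,S_1)=(R,S)$), and checks the membership $(Q,P)\in\mb{LP}_2\bigl(\de_1(R_1,S_1),\de_2(R_1,S_1)\bigr)$ and the equality of the two $\gamma$'s by direct word computation, treating only representative cases and leaving the rest to the reader. The only organizational difference is that the paper's case split is driven by the relative lengths of $\de_1(P,Q)$ and $\de_2(P,Q)$, the power $j$ in which $S$ occurs, and whether $sign(P,Q)=sign(R,S)$, rather than by the types of the linked pairs.
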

\begin{proof}
We construct the linked pair in some cases, the other cases are left to the reader.\\
We can suppose that $l\bigl(\de_2(P,Q)\bigr)\geq l\bigl(\de_1(P,Q)\bigr)$ (the other case is analogous).
\begin{enumerate}
\item For $l\bigl(\de_2(P,Q)\bigr)= l\bigl(\de_1(P,Q)\bigr)$, since we have that $R\subset \de_1(P,Q)$ and $S\subset\de_2(P,Q)$, then $(R,S)\in\mb{LP}_1(\VV)$.

Remember that $V_1$ is the linear representative of $\de_1(P,Q)$ starting at $p_2$. As a consequence we can suppose that $$V_1=p_2B_1r_1Y_1q_1Y_2r_2X_2,$$
where $Y=Y_1q_1Y_2$ and $X=Y_2r_2X_2$. In a similar form we can considere that $$V_2=q_2A_1s_1Ys_2A_2p_1X.$$
Therefore  $V_1V_2=p_2B_1r_1Y_1q_1Y_2r_2X_2q_2A_1s_1Ys_2A_2p_1X$ and
$$\begin{array}{rcl}
    \de_1(R,S) & = & c\bigl(r_2X_2q_2A_1s_1Y_1q_1Y_2\bigr)=c\bigl(A_1s_1Y_1q_1Xq_2\bigr)\ni Q. \\
    \de_2(R,S) & = & c\bigl(s_2A_2p_1Xp_2B_1r_1Y\bigr) \ni P.
  \end{array}
$$ Hence $(Q,P)\in\mb{LP}_2\bigl(\de_1(R,S),\de_2(R,S)\bigr)$.

Finally, we calculate $\gamma_{(P,Q)}(R,S)$ and $\gamma_{(R,S)}(Q,P)$.
$$\begin{array}{lclcl}
    \gamma_{(P,Q)}(R,S) & = & c\bigl(r_2X_2p_2B_1r_1Ys_2A_2p_1Xq_2A_1s_1Y\bigr) & = & c\bigl(p_2B_1r_1Ys_2A_2p_1Xq_2A_1s_1Yr_2X_2\bigr)\\
     & = & c\bigl(p_2B_1r_1Ys_2A_2p_1Xq_2A_1s_1Y_1q_1X\bigr) &&
  \end{array}
$$
and
$$\begin{array}{rclcl}
    \gamma_{(R,S)}(Q,P) & = & c\bigl(q_2A_1s_1Y_1q_1Xp_2B_1r_1Ys_2A_2p_1X\bigr) & = & c\bigl(p_2B_1r_1Ys_2A_2p_1Xq_2A_1s_1Y_1q_1X\bigr),
  \end{array}
$$
and we conclude that $\gamma_{(P,Q)}(R,S)=\gamma_{(R,S)}(Q,P).$
\item For $l\bigl(\de_2(P,Q)\bigr) > l\bigl(\de_1(P,Q)\bigr)$ we consider the case $j=1$ and $sign(P,Q)\neq sign(R,S)$.

Since $j=1$ we have $S\subset \de_2(P,Q)$, and as a consequence $V_2=q_2A_1s_1Ys_2A_2p_1X$ and $V_1=p_2B_1r_1C_2r_2B_2q_1X$, where $Y=C_2r_2B_2q_1Xp_2B_1r_1C_2$. In this case we have
$$V_1V_2=p_2B_1r_1C_2r_2B_2q_1Xq_2A_1s_1C_2r_2B_2q_1Xp_2B_1r_1C_2s_2A_2p_1X$$
and $$\begin{array}{ccc}
        R_1 & = & r_1C_2r_2B_2q_1Xq_2, \\
        S_1 & = & s_1C_2r_2B_2q_1Xp_2,
      \end{array}
$$ is a linked pair of $\VV$ and $$\begin{array}{rcl}
                                     \de_1(R_1, S_1) & = & c\bigl(q_2A_1s_1C_2r_2B_2q_1X\bigr)\ni Q. \\
                                     \de_2(R_1, S_1) & = & c\bigl(p_2B_1r_1C_2s_2A_2p_1Xp_2B_1r_1C_2r_2B_2q_1X\bigr) \ni P.
                                   \end{array}
$$
Finally, we calculate $\gamma_{(P,Q)}(R,S)$ and $\gamma_{\bigl(R_1,S_1\bigr)}(Q,P)$.
$$\begin{array}{lcl}
    \gamma_{(P,Q)}(R,S) & = & c\bigl(r_2B_2q_1Xp_2B_1r_1C_2s_2A_2p_1Xq_2A_1s_1Y\bigr),\\
    \gamma_{\bigl(R_1,S_1\bigr)}(Q,P) & = & c\bigl(q_2A_1s_1Yr_2B_2q_1Xp_2B_1r_1C_2s_2A_2p_1X\bigr),
  \end{array}
$$ and we conclude that $\gamma_{(P,Q)}(R,S)=\gamma_{\bigl(R_1,S_1\bigr)}(Q,P).$
\item For $l\bigl(\de_2(P,Q)\bigr) > l\bigl(\de_1(P,Q)\bigr)$ we consider the case $j=2$ and $sign(P,Q) = sign(R,S)$.

This case is a little more complicated, but with some careful it is posible to construct a linear representative of $\de_1(P,Q)$ and $\de_2(P,Q)$ as follows
$$V_1=p_2B_4s_1D_2r_1C_2r_2D_1s_2B_3q_1X\quad\textrm{and}\quad V_2=q_2A_1s_1E_2s_2A_2p_1X,$$
where $E_2=D_2r_1C_2=C_2r_2D_1$. Then
$$V_1V_2=p_2B_4s_1E_2r_2D_1s_2B_3q_1Xq_2A_1s_1E_2s_2A_2p_1X,$$
with  $A_1=H_2r_2D_4=H_2r_2D_1s_2B_3q_1Xp_2B_4$.
Therefore, we can find a new  linked pair in $\VV$:
$$\begin{array}{ccc}
    R_1 & = & p_1Xp_2B_4s_1E_2r_2, \\
    S_1 & = & q_1Xp_2B_4s_1E_2s_2,
  \end{array}
$$ and
$$\begin{array}{rcl}
    \de_1\bigl(R_1, S_1\bigr) & = & c\bigl(r_2D_1s_2B_3q_1Xq_2H_2r_2D_1s_2B_3q_1Xp_2B_4s_1E_2\bigr)\ni Q. \\
    \de_2\bigl(R_1, S_1\bigr) & = & c\bigl(s_2A_2p_1Xp_2B_4s_1E_2\bigr)\ni P.
  \end{array}
$$
As before, it rests to determine $\gamma_{(P,Q)}(R,S)$ and $\gamma_{\bigl(R_1,S_1\bigr)}(Q,P)$:
$$\begin{array}{lclcl}
    \gamma_{\bigl(R_1,S_1\bigr)}(Q,P) & = & c\bigl(q_2H_2r_2D_4s_1E_2r_2D_4s_1E_2s_2A_2p_1X\bigr) & = & c\bigl(q_2A_1s_1E_2r_2D_4s_1E_2s_2A_2p_1X\bigr),\\
     \gamma_{(P,Q)}(R,S)& = & c\bigl(r_2D_4s_1D_2r_1C_2s_2A_2p_1Xq_2A_1s_1E_2\bigr) & = & c\bigl(q_2A_1s_1E_2r_2D_4s_1E_2s_2A_2p_1X\bigr).
  \end{array}$$
\end{enumerate}
\end{proof}

\begin{cor}\label{c1}
With the same conditions of the last lemma we have that: $$[\;,\,]_{(R,S)}\bigl(\de_{(P,Q)}(\VV)\bigr)=-[\;,\,]_{(Q,P)}\bigl(\de_{\bigl(R_1,S_1\bigr)}(\VV)\bigr).$$
\end{cor}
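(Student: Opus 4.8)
The plan is to unwind both sides into the term‑indexed notation for the bracket and then invoke Lemma~\ref{l1} together with Lemma~\ref{le2}(b); no new combinatorics is needed.

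First I would expand the left‑hand side. Recall that $\de_{(P,Q)}(\VV)=sign(P,Q)\,\de_1(P,Q)\otimes\de_2(P,Q)$ denotes the single summand of $\de(\VV)$ indexed by the pair $(P,Q)$, and that for $(R,S)\in\mb{LP}_2\bigl(\de_1(P,Q),\de_2(P,Q)\bigr)$ the symbol $[\;,\,]_{(R,S)}$ denotes the corresponding summand of the bracket $\bigl[\de_1(P,Q),\de_2(P,Q)\bigr]$. With these conventions,
$$[\;,\,]_{(R,S)}\bigl(\de_{(P,Q)}(\VV)\bigr)=sign(P,Q)\,sign(R,S)\,\gamma_{(P,Q)}(R,S).$$
Next I would expand the right‑hand side in the same way. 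By Lemma~\ref{l1} we have $(R_1,S_1)\in\mb{LP}_1(\VV)$ and $(Q,P)\in\mb{LP}_2\bigl(\de_1(R_1,S_1),\de_2(R_1,S_1)\bigr)$, so
$$[\;,\,]_{(Q,P)}\bigl(\de_{(R_1,S_1)}(\VV)\bigr)=sign(R_1,S_1)\,sign(Q,P)\,\gamma_{(R_1,S_1)}(Q,P).$$

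Then I would substitute the three identities already at our disposal: from Lemma~\ref{l1}, $sign(R_1,S_1)=sign(R,S)$ and $\gamma_{(R_1,S_1)}(Q,P)=\gamma_{(P,Q)}(R,S)$; from Lemma~\ref{le2}(b), $sign(Q,P)=-sign(P,Q)$. Plugging these into the second display turns it into
$$[\;,\,]_{(Q,P)}\bigl(\de_{(R_1,S_1)}(\VV)\bigr)=-\,sign(P,Q)\,sign(R,S)\,\gamma_{(P,Q)}(R,S)=-\,[\;,\,]_{(R,S)}\bigl(\de_{(P,Q)}(\VV)\bigr),$$
which is exactly the asserted identity. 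There is no genuine obstacle here, since all the substantive work has been carried out in Lemma~\ref{l1}; the only point requiring care is the bookkeeping of the slots, namely that the bracket summand on the right is indexed by the \emph{reversed} pair $(Q,P)$ (while the $\de$‑summand on the left is indexed by $(P,Q)$), which is precisely what activates the sign flip of Lemma~\ref{le2}(b) and produces the overall minus sign. This corollary is the step that will supply, in the compatibility argument of this section, the pairwise cancellation of the terms in the composite under consideration.
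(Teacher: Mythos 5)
Your proposal is correct and follows essentially the same route as the paper: both expand the two bracket summands into products of signs times $\gamma$-words, then substitute $sign(R_1,S_1)=sign(R,S)$ and $\gamma_{(R_1,S_1)}(Q,P)=\gamma_{(P,Q)}(R,S)$ from Lemma~\ref{l1} together with the antisymmetry $sign(Q,P)=-sign(P,Q)$ from Lemma~\ref{le2}(b). Your bookkeeping is in fact slightly cleaner than the paper's, which writes $\gamma_{(R_1,S_1)}(P,Q)$ where the lemma supplies $\gamma_{(R_1,S_1)}(Q,P)$.
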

\begin{proof}
Note that $$[\;,\,]_{(R,S)}\bigl(\de_{(P,Q)}(\VV)\bigr)=sign(P,Q)sign(R,S)\gamma_{(P,Q)}(R,S)$$ and $$[\; ,\,]_{(Q,P)}\bigl(\de_{\bigl(R_1,S_1\bigr)}(\VV)\bigr)=sign(Q,P)sign\bigl(R_1,S_1\bigr)\gamma_{(R_1,S_1)}(P,Q)=-sign(P,Q)sign\bigl(R_1,S_1\bigr)\gamma_{\bigl(R_1,S_1\bigr)}(P,Q)$$ and, by the lemma \ref{l1}, this is the same that $$-sign(P,Q)sign(R,S)\gamma_{(P,Q)}(R,S)=-[\;,\,]_{(R,S)}\bigl(\de_{(P,Q)}(\VV)\bigr).$$
\end{proof}

\begin{thm}
$(\mathbb{V},[\;,\;],\de)$ is an involutive Lie bialgebra.
\end{thm}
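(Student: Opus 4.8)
By Theorem~\ref{t2}, $(\mb V,[\;,\;])$ is a Lie algebra, and by Theorem~\ref{t1}, $(\mb V,\de)$ is a Lie coalgebra; hence, by the definition of an involutive Lie bialgebra, it remains to establish two identities: the compatibility (cocycle) identity
$$\de\bigl([\WW,\ZZ]\bigr)=\WW\cdot\de(\ZZ)-\ZZ\cdot\de(\WW),\qquad\text{where }\ \WW\cdot(\YY\otimes\TT)=[\WW,\YY]\otimes\TT+\YY\otimes[\WW,\TT],$$
and the involutivity identity $[\;,\;]\circ\de=0$ on $\mb V$. The plan is to treat both by the method already used for the co-Jacobi and Jacobi axioms: expand each side as a finite sum over nested linked pairs, and produce an explicit sign-preserving bijection between the index sets (for the first identity) or a fixed-point-free involution with sign $-1$ (for the second), so that the two sides coincide term by term, respectively so that the terms cancel in pairs.

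For the cocycle identity I would expand the left-hand side as
$$\de\bigl([\WW,\ZZ]\bigr)=\sum_{(P,Q)\in\mb{LP}_2(\WW,\ZZ)}\ \sum_{(R,S)\in\mb{LP}_1(\ga(P,Q))} sign(P,Q)\, sign(R,S)\ \de_1(R,S)\otimes\de_2(R,S),$$
and the right-hand side as a sum of four groups of terms, each indexed by a linked pair in $\mb{LP}_1(\WW)$ or $\mb{LP}_1(\ZZ)$ together with a linked pair between the remaining word and one of the two pieces $\de_1,\de_2$ of the first. Writing $\ga(P,Q)=c(W_1Z_1)$ as in Definition~\ref{d2}, classify each $(R,S)\in\mb{LP}_1(\ga(P,Q))$ by where its distinguished letters lie, in $W_1$ or in $Z_1$. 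If they all lie in $W_1$, then $(R,S)$ is inherited from a self-intersection of $\WW$, and cutting $\ga(P,Q)$ there yields one piece of $\WW$ and one loop of the form $\ga(P',Q')$ for a linked pair $(P',Q')$ between $\ZZ$ and the other piece of $\WW$; such terms match the terms of $-\ZZ\cdot\de(\WW)$. Symmetrically the pairs with all letters in $Z_1$ match the terms of $\WW\cdot\de(\ZZ)$. The remaining ``mixed'' pairs record a further intersection of $\WW$ with $\ZZ$, distinct from $(P,Q)$; the contribution indexed by $\bigl((P,Q),(R,S)\bigr)$ is then cancelled by the one indexed by the pair obtained by interchanging the two intersection points, using the coskew-symmetry of $\de$ together with Lemmas~\ref{le2.4.1} and~\ref{l2}. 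Each of these matchings has to be verified through the three linked-pair types of Definition~\ref{d1}, keeping track of signs via the formulas defining $sign$.

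For the involutivity identity, write
$$[\;,\;]\circ\de(\VV)=\sum_{(P,Q)\in\mb{LP}_1(\VV)} sign(P,Q)\,\bigl[\de_1(P,Q),\de_2(P,Q)\bigr]=\sum_{(P,Q)\in\mb{LP}_1(\VV)}\ \sum_{(R,S)\in\mb{LP}_2(\de_1(P,Q),\de_2(P,Q))}\ [\;,\;]_{(R,S)}\bigl(\de_{(P,Q)}(\VV)\bigr).$$
By Lemma~\ref{l1} the assignment $\bigl((P,Q),(R,S)\bigr)\mapsto\bigl((R_1,S_1),(Q,P)\bigr)$ sends each index pair of this double sum to another one, and by Corollary~\ref{c1} the two corresponding summands are negatives of one another. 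It then remains to check that this assignment is an involution with no fixed points: applying it twice recovers the original pair, because the linked pairs involved together with $\ga_{(P,Q)}(R,S)$ determine the construction of Lemma~\ref{l1}, which is manifestly reversible; and there is no fixed point, since the assignment always replaces $(P,Q)$ by $(Q,P)$, while a linked pair is never equal to its reverse. Hence the summands cancel in pairs and $[\;,\;]\circ\de=0$, so $(\mb V,[\;,\;],\de)$ is an involutive Lie bialgebra.

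The step I expect to be the main obstacle is the cocycle identity: whereas the involutivity is essentially packaged by Lemma~\ref{l1} and Corollary~\ref{c1}, the cocycle identity requires a genuinely new bijection whose verification spans all three linked-pair types, for both $\WW\cdot\de(\ZZ)$ and $\ZZ\cdot\de(\WW)$, and whose sign bookkeeping is delicate; handling the cancellation of the ``mixed'' terms cleanly, and ensuring that the correspondence with the two pieces $\de_1(R,S),\de_2(R,S)$ is exhaustive and compatible with the conventions of Definitions~\ref{d1} and~\ref{d2}, is where most of the work lies.
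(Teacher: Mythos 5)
Your proposal follows essentially the same route as the paper: involutivity is obtained by pairing the summands of the double sum via Lemma~\ref{l1} and Corollary~\ref{c1}, and the compatibility identity is proved by classifying each $(R,S)\in\mb{LP}_1\bigl(\ga(P,Q)\bigr)$ according to whether it comes from a self-intersection of the first word, of the second word, or from a further mutual intersection, matching the first two groups with the action terms and cancelling the mixed terms in pairs. Your explicit remark that the pairing in the involutivity argument must be checked to be a fixed-point-free involution is a point the paper leaves implicit, but it is a refinement of, not a departure from, the same argument.
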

\begin{proof}
\begin{enumerate}
\item[(1)] Involutive: $[\;,\,]\circ\delta=0$.
$$
[\;,\,]\circ\de(\VV)=\sum_{(P,Q)\in\mb{LP}_1(\VV)}sign(P,Q)\bigl[\de_1(P,Q),\de_2(P,Q)\bigr].
$$
$$\bigl[\de_1(P,Q),\de_2(P,Q)\bigr]=\sum_{(R,S)\in\mb{LP}_2\bigl(\de_1(P,Q),\de_2(P,Q)\bigr)}sign(P,Q)sign(R,S)\gamma(R,S).$$
Consequently, we have that
$$[\;,\,]\bigl(\de(\VV)\bigr)=\sum_{(P,Q)\in\mb{LP}_1\bigl(\VV\bigr)}\sum_{(R,S)\in\mb{LP}_2\bigl(\de_1(P,Q),\de_2(P,Q)\bigr)}sign(P,Q)sign(R,S)\gamma(R,S)$$
To prove this result we need to guarantee that the term
$sign(P,Q)sign(R,S)\gamma(R,S)$ is canceled by another term in the
sum. This statement is a consequence of the lemma \ref{l1} and the corollary \ref{c1}.
\item[(2)] Compatibility condition: $\de\bigl([\VV,\WW]\bigr)=\VV\cdot
\de(\WW)-\WW\cdot\de(\VV)$, where the action is given by
$\ma{Z}\cdot(\VV\otimes \WW)=[\ma{Z},\VV]\otimes \WW+\VV\otimes
[\ma{Z},\WW]$.
\begin{alignat*}{2}
\de\bigl([\VV,\WW]\bigr)&=\de\left(\sum_{(P,Q)\in\mb{LP}_2(\VV,\WW)}sign(P,Q)\gamma(P,Q)\right)\\
&=\sum_{(P,Q)\in\mb{LP}_2\bigl(\VV,\WW\bigr)}\sum_{(R,S)\in\mb{LP}_1\bigl(\gamma(P,Q)\bigr)}sign(P,Q)sign(R,S)\de_1(R,S)\otimes\de_2(R,S).
\end{alignat*}
\begin{alignat*}{2}
\VV\cdot
\de(\WW)&=\sum_{(P,Q)\in\mb{LP}_1\bigl(\WW\bigr)}\sum_{(R,S)\in\mb{LP}_2\bigl(\VV,\de_1(P,Q)\bigr)}signs(P,Q)sign(R,S)\gamma(R,S)\otimes\de_2(P,Q)\\
&+\sum_{(P,Q)\in\mb{LP}_1\bigl(\WW\bigr)}\sum_{(R,S)\in\mb{LP}_2\bigl(\VV,\de_2(P,Q)\bigr)}signs(P,Q)sign(R,S)\de_1(P,Q)\otimes\gamma(R,S).\\
\WW\cdot\de(\VV)&=\sum_{(P,Q)\in\mb{LP}_1\bigl(\VV\bigr)}\sum_{(R,S)\in\mb{LP}_2\bigl(\WW,\de_1(P,Q)\bigr)}signs(P,Q)sign(R,S)\gamma(R,S)\otimes\de_2(P,Q)\\
&+\sum_{(P,Q)\in\mb{LP}_1\bigl(\VV\bigr)}\sum_{(R,S)\in\mb{LP}_2\bigl(\WW,\de_2(P,Q)\bigr)}signs(P,Q)sign(R,S)\de_1(P,Q)\otimes\gamma(R,S).
\end{alignat*}
\end{enumerate}
First, note that
\begin{equation}\label{e1}
\begin{array}{ccl}
 \de\bigl([\VV,\WW]\bigr) & = \displaystyle{\sum_{{\scriptscriptstyle (P,Q)\in\mb{LP}_2(\VV,\WW),\{(R,S),(S,R)\}\subset\mb{LP}_1(\gamma(P,Q))}}} & sg(P,Q)sg(R,S)\bigl\langle\de_1(R,S)\otimes\de_2(R,S) \\
   &  & -\de_2(R,S)\otimes\de_1(R,S)\bigr\rangle.
\end{array}
\end{equation}

If we have $(R,S)\in\mb{LP}_1\bigl(\gamma(P,Q)\bigr)$, then there are three different possible situation: $(R,S)\in\mb{LP}_1(\VV)$, $(R,S)\in\mb{LP}_1(\WW)$ or $(R,S)\in\mb{LP}_2\bigl(\VV,\WW\bigr)\backslash(P,Q)$.

In the next paragraph we illustrate the proof of the compatibility condition.  We suppose that $l(\VV)=l(\WW)$, and as a consequence we have that $P\subset \VV$ and $Q\subset\WW$. \\
$\bullet$ For $(R,S)\in\mb{LP}_1(\VV)$ we suppose that $V_1=p_2A_1r_1Yr_2A_2s_1A_3p_1X_1s_2X_2$ and $W_1=q_2Bq_1X$, where $X=X_1s_2X_2$ and $Y=A_3p_1X_1$.

First, we determine the following $$\begin{array}{ccl}
                        \de_{(R,S)}\bigl([\VV,\WW]_{(P,Q)}\bigr) & = & sign(P,Q)sign(R,S)\bigl\langle\de_1(R,S)\otimes\de_2(R,S)-\de_2(R,S)\otimes\de_1(R,S)\bigr\rangle.
 \end{array}$$
Now, since $\gamma(P,Q)=c\bigl(p_2A_1r_1Yr_2A_2s_1A_3p_1X_1s_2X_2q_2Bq_1X\bigr)$, then $\de_1(R,S)=c\bigl(r_2A_2s_1A_3p_1X_1\bigr)$ and $\de_2(R,S)=c\bigl(s_2X_2q_2Bq_1Xp_2A_1r_1Y\bigr)$.\\
Since $(R,S)\in\mb{LP}_1(\VV)$, we have $\de_1(R,S)_{\VV}=c\bigl(r_2A_2s_1A_3p_1X_1\bigr)$ and $\de_2(R,S)_{\VV}=c\bigl(s_2X_2p_2A_1r_1Y\bigr)=c\bigl(s_2X_2p_2A_1r_1A_3p_1X_1\bigr)=c\bigl(A_1r_1A_3p_1X_1s_2X_2p_2\bigr)=c\bigl(A_1r_1A_3p_1Xp_2\bigr)\ni P$.\\ We conclude that $(Q,P)\in\mb{LP}_2\bigl(\WW,\de_2(R,S)_{\VV}\bigr)$, and as a result we have associated to these pairs the term $\WW\cdot_{(Q,P)}\de(\VV)_{(R,S)}$.
$$\begin{array}{ccl}
                       \WW\cdot_{(Q,P)}\de(\VV)_{(R,S)}  & = & sign(Q,P)sign(R,S)\bigl\langle\de_1(R,S)\otimes\gamma(Q,P)-\gamma(Q,P)\otimes\de_1(R,S)\bigr\rangle \\
                         & = & -sign(P,Q)sign(R,S)\bigl\langle c\bigl(r_2A_2s_1A_3p_1X_1\bigr)\otimes c\bigl(q_2Bq_1Xp_2A_1r_1A_3p_1X\bigr)\\
                         && + c\bigl(q_2Bq_1Xp_2A_1r_1A_3p_1X\bigr)\otimes c\bigl(r_2A_2s_1A_3p_1X_1\bigr)\bigr\rangle\\
                         & = & -sign(P,Q)sign(R,S)\bigl\langle c\bigl(r_2A_2s_1A_3p_1X_1\bigr)\otimes c\bigl(q_2Bq_1Xp_2A_1r_1Ys_2X_2\bigr)\\
                         && + c\bigl(q_2Bq_1Xp_2A_1r_1Ys_2X_2\bigr)\otimes c\bigl(r_2A_2s_1A_3p_1X_1\bigr)\bigr\rangle.
                      \end{array}$$
Therefore $\de_{(R,S)}\bigl([\VV,\WW]_{(P,Q)}\bigr)=- \WW\cdot_{(Q,P)}\de(\VV)_{(R,S)}$.\\
$\bullet$ For $(R,S)\in\mb{LP}_1(\WW)$ we suppose that $W_1=q_2B_1r_1Yr_2B_2s_1B_3q_1X_1s_2X_2$ and $V_1=p_2Ap_1X$, where $X=X_1s_2X_2$ and $Y=B_3q_1X_1$.

In this case we have that $\de_1(R,S)_{\WW}=c\bigl(r_2B_2s_1B_3q_1X_1\bigr)$ and $\de_2(R,S)_{\WW}=c\bigl(s_2X_2q_2B_1r_1Y\bigr)=c\bigl(s_2X_2q_2B_1r_1B_3q_1X_1\bigr)=c\bigl(B_1r_1B_3q_1X_1s_2X_2q_2\bigr)=c\bigl(B_1r_1B_3q_1Xq_2\bigr)\ni Q$.\\
Hence, in this case we have that $(P,Q)\in\mb{LP}_2\bigl(\VV,\de_2(R,S)_{\WW}\bigr)$, and we have associated to these pairs the term $\VV\cdot_{(P,Q)}\de(\WW)_{(R,S)}$.

Note that, in this case
$$\begin{array}{ccl}
\de_{(R,S)}\bigl([\VV,\WW]_{(P,Q)}\bigr) &=&  sign(P,Q)sign(R,S)\bigl\langle\de_1(R,S)\otimes\de_2(R,S)-\de_2(R,S)\otimes\de_1(R,S)\bigr\rangle\\
&=& sign(P,Q)sign(R,S)\bigl\langle c\bigl(r_2B_2s_1B_3q_1X_1\bigr)\otimes c\bigl(s_2X_2p_2Ap_1Xq_2B_1r_1Y\bigr)\\
&& - c\bigl(s_2X_2p_2Ap_1Xq_2B_1r_1Y\bigr)\otimes c\bigl(r_2B_2s_1B_3q_1X_1\bigr)\bigr\rangle.
\end{array}$$
By the other hand
$$\begin{array}{ccl}
                       \VV\cdot_{(P,Q)}\de(\WW)_{(R,S)}  & = & sign(P,Q)sign(R,S)\bigl\langle\de_1(R,S)\otimes\gamma(P,Q)-\gamma(P,Q)\otimes\de_1(R,S)\bigr\rangle \\
                         & = & sign(P,Q)sign(R,S)\bigl\langle c\bigl(r_2B_2s_1B_3q_1X_1\bigr)\otimes c\bigl(p_2Ap_1Xq_2B_1r_1B_3q_1X\bigr)\\
                         && - c\bigl(p_2Ap_1Xq_2B_1r_1B_3q_1X\bigr)\otimes c\bigl(r_2B_2s_1B_3q_1X_1\bigr)\bigr\rangle\\
                         & = & sign(P,Q)sign(R,S)\bigl\langle c\bigl(r_2B_2s_1B_3q_1X_1\bigr)\otimes c\bigl(p_2Ap_1Xq_2B_1r_1Ys_2X_2\bigr)\\
                         && - c\bigl(p_2Ap_1Xq_2B_1r_1Ys_2X_2\bigr)\otimes c\bigl(r_2B_2s_1B_3q_1X_1\bigr)\bigr\rangle.
                      \end{array}$$
Hence $\de_{(R,S)}\bigl([\VV,\WW]_{(P,Q)}\bigr)= \VV\cdot_{(P,Q)}\de(\WW)_{(R,S)}$.\\
$\bullet$ Finally, we suppose that $R\subset\VV$ and $S\subset \WW$. In this case we consider $V_1=p_2A_1r_1Yr_2A_2p_1X$ and $W_1=q_2B_1s_1Y_1q_1Y_2s_2X_1$, where $Y=Y_1q_1Y_2$ and $X=Y_2s_2X_1$. Consequently,
$$\gamma(P,Q)=c\bigl(p_2A_1r_1Yr_2A_2p_1Xq_2B_1s_1Y_1q_1Y_2s_2X_1\bigr)$$
and
$$\begin{array}{ccl}
\de_{(R,S)}\bigl([\VV,\WW]_{(P,Q)}\bigr) &=&  sign(P,Q)sign(R,S)\bigl\langle\de_1(R,S)\otimes\de_2(R,S)-\de_2(R,S)\otimes\de_1(R,S)\bigr\rangle\\
&=& sign(P,Q)sign(R,S)\bigl\langle c\bigl(r_2A_2p_1Xq_2B_1s_1Y\bigr)\otimes c\bigl(s_2X_1p_2A_1r_1Y\bigr)\\
&& - c\bigl(s_2X_1p_2A_1r_1Y\bigr)\otimes c\bigl(r_2A_2p_1Xq_2B_1s_1Y\bigr)\bigr\rangle\\
&=& sign(P,Q)sign(R,S)\bigl\langle c\bigl(r_2A_2p_1Xq_2B_1s_1Y\bigr)\otimes c\bigl(p_2A_1r_1Y_1q_1X\bigr)\\
&& - c\bigl(p_2A_1r_1Y_1q_1X\bigr)\otimes c\bigl(r_2A_2p_1Xq_2B_1s_1Y\bigr)\bigr\rangle\\
\end{array}$$
As $R\subset\VV$ and $S\subset \WW$ we have that $\VV$ and $\WW$ have linear representatives starting at $r_2$ and $s_2$ respectively as follows:
$$V_2=r_2A_2p_1Y_2s_2X_1p_2A_1r_1Y_1q_1Y_2\quad\textrm{and}\quad W_2=s_2X_1q_2B_1s_1Y_1q_1Y_2.$$
 As a consequence $$\gamma(R,S)=c\bigl(r_2A_2p_1Y_2s_2X_1p_2A_1r_1Y_1q_1Y_2s_2X_1q_2B_1s_1Y_1q_1Y_2\bigr)=c\bigl(r_2A_2p_1Xp_2A_1r_1Y_1q_1Xq_2B_1s_1Y\bigr).$$ Therefore, $(P,Q)\in\mb{LP}_1\bigl(\gamma(R,S)\bigr).$ In this situation we can calculate $\de_{(P,Q)}\bigl([\VV,\WW]_{(R,S)}\bigr)$.
 $$\begin{array}{ccl}
     \de_{(P,Q)}\bigl([\VV,\WW]_{(R,S)}\bigr) & = & sign(P,Q)sign(R,S)\bigl\langle\de_1(P,Q)\otimes\de_2(P,Q)-\de_2(P,Q)\otimes\de_1(P,Q)\bigr\rangle \\
      & = &  sign(P,Q)sign(R,S)\bigl\langle c\bigl(p_2A_1r_1Y_1q_1X\bigr)\otimes c\bigl(q_2B_1s_1Yr_2A_2p_1X\bigr)\\
      && -c\bigl(q_2B_1s_1Yr_2A_2p_1X\bigr)\otimes c\bigl(p_2A_1r_1Y_1q_1X\bigr)\bigr\rangle.
         \end{array}
 $$
 Note that $\de_{(R,S)}\bigl([\VV,\WW]_{(P,Q)}\bigr)=-\de_{(P,Q)}\bigl([\VV,\WW]_{(R,S)}\bigr) $, then these terms kill each other in the expression \ref{e1}. As a consequence we have that the compatibility is satisfied.

\end{proof}

\bibliographystyle{amsplain}

\end{document}